\numberwithin{equation}{section}
\def\argmin{{\rm argmin}}
\def\eqnok#1{(\ref{#1})}
\newcommand{\tsum}{\textstyle\sum}
\newcommand{\bbe}{\mathbb{E}}
\newcommand{\bbr}{\Bbb{R}}
\newcommand{\nn}{\nonumber}
\def\vgap{\vspace*{.1in}}
\newcommand{\beq}{\begin{equation}}
\newcommand{\eeq}{\end{equation}}
\newcommand{\beqa}{\begin{eqnarray}}
\newcommand{\eeqa}{\end{eqnarray}}
\newcommand{\beqas}{\begin{eqnarray*}}
\newcommand{\eeqas}{\end{eqnarray*}}
\def\eqnok#1{(\ref{#1})}
\title{ Conditional Gradient Methods for Convex Optimization\\
with General Affine and Nonlinear Constraints
\thanks{
This research was partially supported by the ONR grant N00014-20-1-2089 and NSF grant CCF 1909298.}}
\author{
Guanghui Lan
	\thanks{H. Milton Stewart School of Industrial and Systems Engineering, Georgia Institute of Technology, Atlanta, GA, 30332 .
		(email: {\tt george.lan@isye.gatech.edu}).}
	\and
H. Edwin Romeijn
	\thanks{H. Milton Stewart School of Industrial and Systems Engineering, Georgia Institute of Technology, Atlanta, GA, 30332 .
		(email: {\tt edwin.romeijn@isye.gatech.edu}).}
	\and
Zhiqiang Zhou
    \thanks{H. Milton Stewart School of Industrial and Systems
    Engineering, Georgia Institute of Technology, Atlanta, GA, 30332.
    (email: {\tt zzhoubrian@gatech.edu}). }
}
\date{\today}
\begin{document}
\maketitle
\begin{abstract}
Conditional gradient methods have attracted much attention in both machine learning and optimization communities recently. 
These simple methods can guarantee the generation of sparse solutions. In addition, without the computation of full gradients, 
they can handle huge-scale problems sometimes even with an exponentially increasing number of decision variables.  
This paper aims to significantly expand the application areas of these methods
by presenting new conditional gradient methods for solving convex optimization problems with general affine and nonlinear constraints. 
More specifically, we first present a new constraint extrapolated condition gradient (CoexCG) method
that can achieve an ${\cal O}(1/\epsilon^2)$ iteration complexity for both smooth and structured nonsmooth 
function constrained convex optimization. We further develop novel variants of CoexCG, namely
constraint extrapolated and dual regularized conditional gradient (CoexDurCG) methods, that can 
achieve similar iteration complexity to CoexCG but allow adaptive selection for algorithmic parameters.
We illustrate the effectiveness of these methods for solving an important class of radiation therapy treatment planning problems arising from healthcare industry.
To the best of our knowledge, all the algorithmic schemes and
their complexity results are new in the area of projection-free methods.
\end{abstract}

\section{Introduction}

In this paper, we focus on the development of conditional gradient type methods
for solving the following convex optimization problem:
\begin{equation}\label{general}
\begin{aligned}
\min\quad & f(x) \\
\text{ s.t. }\quad & g(x) := Ax -b = 0,    \\
&  h_i(x) \leq 0, \ \  i =1,\ldots, d, \\
&  x\in X.
\end{aligned}
\end{equation}
Here $X \subseteq \bbr^n$ is a compact convex set, $f: X \to \bbr$ and $h_i: X \to \bbr$, $i = 1, \ldots, d$, are proper lower semicontinuous 
convex functions, $A: \bbr^n \to \bbr^m$ denotes a linear mapping, and $b$
is a given vector in $\bbr^m$. 
We assume that $X$ is relatively simple in the sense that one can minimize a linear function over $X$ easily.
Throughout this paper we assume that an optimal solution $x^*$ of problem \eqnok{general} exists.
For notational convenience, we often denote $h(x) \equiv (h_1(x); \ldots, h_d(x))$.

The conditional gradient method, initially developed by 
Frank and Wolfe in 1956~\cite{FrankWolfe56-1}, 
is one of the earliest first-order methods
for convex optimization. It has been widely used
for solving problems with relatively simple convex sets, i.e.,
when the constraints $g(x)=0$ and $h_i(x) \le 0$ do not appear in problem \eqnok{general}.
 Each iteration of this method
 computes the gradient of $f$ at the current search point $x_k$, and then
 solves the subproblem $\min_{x \in X} \langle \nabla f(x_k), x \rangle$
 to update the solution. In comparison with most other first-order methods,
 it does not require the projection over $X$, which in many cases
 could be computationally more expensive than to minimize a linear function over $X$
(e.g.. when $X$ is a spectrahedron given by $X:=\{ X \succeq 0: {\rm Tr}(X) =1\}$).
 These simple methods can also guarantee the generation of sparse solutions, e.g., when $X$ is a simplex or spectrahedron.
 In addition, without the computation of full gradients, they can handle huge-scale problems sometimes 
even with an exponentially increasing number of decision variables.  
 
 Much recent research effort has been devoted to the complexity analysis
of conditional gradient methods over simple convex set $X$. It is well-known that if $f$ is a smooth convex function, then
this algorithm can find an $\epsilon$-solution (i.e., a point $\bar x \in X$ s.t. $f(\bar x) - f^* \le \epsilon$)
in at most ${\cal O} (1/ \epsilon)$ iterations (see \cite{Jaggi13,Jaggi10,lan2013complexity,Freund13,HarJudNem12-1}). In fact, such a complexity result
has been established for the conditional gradient method under a stronger termination criterion called Wolfe Gap, based on
the first-order optimality condition~\cite{Jaggi13,Jaggi10,lan2013complexity,Freund13,HarJudNem12-1}. 
As shown in~\cite{Jaggi13,lan2013complexity,guzman2015lower}, this ${\cal O}(1/\epsilon)$ iteration complexity bound is tight
for smooth convex optimization. In addition,  if $f$ is a nonsmooth function with a saddle point structure,
one can not achieve an iteration complexity better than ${\cal O}(1/\epsilon^2)$~\cite{lan2013complexity}, in terms of the number of
times to solve the linear optimization subproblem. One possible way to improve the complexity bounds
is to use the conditional gradient sliding methods developed in~\cite{lan2014conditional} 
to reduce the number of gradient evaluations. 
Many other variants of conditional gradient methods have also been proposed
in the literature (see, e.g.,\cite{AhiTodd13-1,Bach12-1,BeckTeb04-1,Clarkson10,Freund13,Hazan08,HarJudNem12-1,Jaggi13,Jaggi10,LussTeb13-1,ShGosh11,ShenKim12-1, jiang2019structured, braun2017lazifying, gonccalves2017newton})
and Chapter 7 of \cite{LanBook2020} for an overview of these methods).
 
It should be noted, however, that none of the existing 
conditional gradient methods can be used to efficiently solve the
more general function constrained convex optimization problem in \eqnok{general}. With these function constraints ($g(x) =0$ and
$h_i(x) \le 0$), linear optimization over the feasible region of problem \eqnok{general} could become much more difficult.
As an example, if $X$ is the aforementioned spectrahedron and $h$ does not exist, 
the linear optimization problem over the feasible region $\{x \succeq 0: g(x) = 0, {\rm Tr}(X) =1\}$ becomes
a general semidefinite programming problem. Adding nonlinear function constraints $h_i(x) \le 0$ usually makes
the subproblem even harder. In fact, our study has been directly motivated by a
convex optimization problem with nonlinear function constraints arising
from radiation therapy treatment planning (see \cite{drzymala1991dose, mayles2007handbook, romeijn2005column, goitein2007radiation, men2009gpu, men2010ultrafast, men2007exact} and Section~\ref{sec_num} for more details).
The objective function of this problem, representing the quality of the treatment plan,
is smooth and convex. Besides a simplex constraint, it consists of
two types of nonlinear function constraints, namely the group sparsity constraint to reduce
radiation exposure for the patients, and the risk averse constraints to
avoid overdose (resp., underdose) to healthy (resp., tumor) structures.
This problem is highly challenging because the dimension of the decision variables
 can increase exponentially with respect to the size of data, which prevents the computation of
 full gradients as required by most existing optimization methods dealing with function constraints.
 
%
%
%
This paper aims to fill in the aforementioned gap in the literature by presenting 
a new class of conditional gradient methods for solving problem~\eqnok{general}.
Our main contributions are briefly summarized as follows. 
Firstly, inspired by the constraint-extrapolation (ConEx) method for function constrained
convex optimization in \cite{BoobDengLan19-1}, we develop a novel constraint-extrapolated conditional gradient (CoexCG) method
for solving problem \eqnok{general}. While both methods are single-loop 
primal-dual type methods for solving convex optimization problems with function constraints,
CoexCG only requires us to minimize a linear function, rather than to perform projection,
over $X$. 
In the basic setting when both $f$ and $h_i$ 
are smooth convex functions with Lipschitz continuous gradients,
we show that the total number of iterations performed by CoexCG before finding a $\epsilon$-solution of problem \eqref{general},
 i.e., a point $\bar x \in X$ s.t. $f(\bar x) - f(x^*) \leq \epsilon$ and $\|g(\bar x)\|_2 + \|[h(\bar x)]_+\|_2 \leq \epsilon$, can be bounded by ${\cal O}(1/\epsilon^2)$. 
 Here $[\cdot]_+ := \max\{\cdot, 0\}$.

Secondly, we consider more general function constrained optimization problems where either the objective
function $f$ or some constraint functions $h_i$ are possibly nondifferentiable, but contains
certain saddle point structure. We extend the CoexCG method for solving these problems in
combination with the well-known Nesterov's smoothing scheme~\cite{Nest05-1}. In general, even equipped with such smoothing technique, 
nonsmooth optimization is more difficult than smooth optimization, and its associated
iteration complexity is worse than that for smooth ones by orders of magnitude. However, we show that a similar ${\cal O}(1/\epsilon^2)$
complexity bound can be achieved by CoexCG for solving these nonsmooth
function constrained optimization problems. This seemly surprising result can be attributed to an inherent
acceleration scheme in CoexCG that can reduce the impact of the Lipschitz constants
induced by the smoothing scheme.

Thirdly, one possible shortcoming of CoexCG exists in that it requires
the total number of iterations $N$ fixed a priori before we run the algorithm
in order to achieve the best rate of convergence.
Therefore it is inconvenient to implement this algorithm
when such an iteration limit is not available.
In order to address this issue, we propose a constraint-extrapolated and dual-regularized conditional gradient (CoexDurCG)
method by adding a diminishing regularization term for the dual updates.
This modification allows us to design a novel adaptive stepsize policy which does not
require $N$ given in advance. Moreover, we show that the complexity of CoexDurCG
is still in the same order of magnitude as CoexCG with a slightly larger constant factor.
We also extend CoexDurCG for solving the aforementioned structured nonsmooth problems,
and demonstrate that  it is not necessary to explicitly define the smooth approximation
problem. We note that this technique of adding a diminishing
regularization term can be applied  for solving problems
with either unbounded primal feasible region (e.g., stochastic subgradient descent~\cite{NJLS09-1} and stochastic accelerated
gradient descent~\cite{Lan10-3}), or unbounded dual feasible region (e.g., ConEx \cite{BoobDengLan19-1}),
for which one often requires the number of iterations fixed in advance.

Finally, we apply the developed algorithms for solving
the radiation therapy treatment planning problem
on both randomly generated instances and a real data set.
We show that CoexDurCG performs comparably to CoexCG in terms 
of solution quality and computation time. 
We demonstrate that the incorporation of function constraints
 helps us  not only to
find feasible treatment plans satisfying clinical criteria, but also generate
alternative treatment plans that can possibly
reduce radiation exposure time for the patients. 

To the best of our knowledge, all the algorithmic schemes as well as
their complexity results are new in the area of projection-free methods for convex optimization.

This paper is organized as follows. Section~\ref{sec_coexCG} 
is devoted to the CoexCG method. We first present the CoexCG method
for smooth function constrained convex optimization in Subsection~\ref{sec_smooth_basic}
and extend it for solving structured nonsmooth function constrained convex optimization in Subsection~\ref{sec_nonsmooth_basic}.
We then discuss the CoexDurCG method in Section~\ref{sec_adaptive}, including
its basic version for smooth function constrained convex optimization in Subsection~\ref{sec_ada_smooth} and
its extended version for directly solving structured nonsmooth function constrained convex optimization problems in Subsection~\ref{sec_ada_nonsmooth}.
We apply these methods for radiation therapy treatment planning in Section~\ref{sec_num}, and conclude
the paper with a brief summary in Section~\ref{sec_remark}.
%
%

\section{Constraint-extrapolated conditional gradient method} \label{sec_coexCG}
In this section, we present a basic version of the constraint-extrapolated conditional gradient method
for solving convex optimization problem~\eqref{general}.
Subsection~\ref{sec_smooth_basic} focuses on the
case when $f$ and $h_i$ are smooth convex functions,
while subsection~\ref{sec_nonsmooth_basic}
extends our discussion to the situation where
$f$ and $h_i$ are not necessarily differentiable.

\subsection{Smooth functions} \label{sec_smooth_basic}
Throughout this subsection, we assume that $f$ and $h_i$ are differential and their gradients are
Lipschitz continuous s.t. 
\begin{align}
\|\nabla f(x_1) - \nabla f(x_2)\|_* &\le L_f \| x_1 - x_2 \|,  \ \forall x_1, x_2 \in X, \label{eq:def_smooth_f}\\
\|\nabla h_i(x_1) - \nabla h_i(x_2) \|_* &\le L_{h,i} \|x_1 - x_2 \|, \forall x_1, x_2 \in X, i = 1, \ldots, d. \label{eq:def_smooth_h}
\end{align}
Here $\|\cdot\|$ denotes an arbitrary norm which is not necessarily associated with the inner product $\langle \cdot, \cdot \rangle$
($\|\cdot\|_*$ is the conjugate norm of $\|\cdot\|$). For notational convenience, we denote
\[
L_h = (L_{h,1}; \ldots; L_{h,d}) \ \ \mbox{and} \ \ \bar L_h = \| L_h\|_2.
\]

We need to use the Lipschitz continuity of the constraint function $h_i$ when developing
conditional gradient methods for function constrained problems.
Clearly, under the boundedness assumption of $X$, the constraint functions $h_i$ are Lipschitz continuous with constant $M_{h,i}$, i.e.,
\begin{equation} \label{eq:def_Lip_h}
\| \nabla h_i(x) \|_* \leq M_{h,i}, \ \forall x \in X. 
\end{equation}
In particular, letting $x^*$ be an optimal solution of problem \eqref{general},
we have
$
M_{h,i} \le \nabla f(x^*) + L_{h,i} D_X,
$
where $D_X$ denotes the diameter of $X$ given by
\begin{equation}\label{eq:def_D_X}
D_X := \max_{x_1, x_2 \in X} \|x_1 - x_2\|.
\end{equation}
Note that a different way to bound on $M_{h,i}$ will be discussed for certain structured nonsmooth problems in Subsection~\ref{sec_nonsmooth_basic}.
For the sake of notational convenience, we also denote
\beq \label{eq:def_Lip_h_combined}
\bar M_h = \sqrt{\tsum_{i=1}^d M_{h,i}^2}.
\eeq


Since we can only perform linear optimization over the feasible region $X$,
one natural way to solve problem \eqnok{general} is to 
consider its saddle point reformulation
\begin{equation}\label{minmax}
\min_{x\in X}\max_{y \in \bbr^m, z \in \bbr^d_+} f(x)+\langle g(x), y \rangle + \langle h(x),z \rangle.
\end{equation}
Throughout the paper, we assume that the standard Slater condition holds for problem \eqnok{general}
so that a pair of optimal dual solutions $(y^*, z^*)$ of problem~\eqnok{minmax} exists.

In \cite{Nest05-1}, Nesterov proposed a novel smoothing scheme
to solve a general bilinear saddle point problem when the term $\langle h(x), z\rangle$ does not exist in \eqnok{minmax}. 
More specifically, he suggested to apply an accelerated gradient method
to solve a smooth approximation for this bilinear saddle point problem. Using this idea,
in \cite{lan2013complexity} (see also Chapter 7 of \cite{LanBook2020}), Lan
presented a smoothing conditional gradient method by
appling the conditional gradient algorithm for a properly smoothed version of the objective function of \eqnok{minmax}.
%
However, this scheme is not applicable
for our setting due to the following reasons. 
Firstly, the smoothing conditional gradient method only solves
bilinear saddle point problems with linear coupling terms given by $\langle g(x), y \rangle$ and cannot deal with
the nonlinear coupling term $\langle h(x), z \rangle$. Secondly,
even for the bilinear saddle point problems, the smoothing conditional gradient method in \cite{LanBook2020,lan2013complexity}
requires the feasible set of $y$ to be bounded, which does not hold for problem~\eqnok{minmax}.

Our development has been inspired the constraint extrapolation (ConEx) method
recently introduced by Boob, Deng and Lan~\cite{BoobDengLan19-1}
for solving problem~\eqnok{minmax}. ConEx is an accelerated primal-dual type method which 
updates both the primal variable $x$ and dual variables $(y, z)$ in each iteration.
In comparison with some previously developed accelerated primal-dual methods for
solving saddle point problems with nonlinear coupling terms~\cite{Nem05-1,aybat2018primal},
one distinctive feature of ConEx is that it defines the 
acceleration (or momentum) step by extrapolating  the linear approximation
of the nonlinear function $h$. As a consequence, it can deal with unbounded
feasible regions for the dual variable $z$ (or $y$) and 
thus solve the function (or affine) constrained convex optimization problems.
However, each iteration of the ConEx method requires the projection onto the feasible region $X$, and hence 
is not applicable to our problem setting.

In order to address the above issues for solving 
problem~\eqnok{general} (or \eqnok{minmax}), we
present a novel constraint-extrapolated conditional gradient (CoexCG) method, which
incorporates some basic ideas of the ConEx method into the conditional gradient method.
As shown in Algorithm~\ref{algCoexCG}, the CoexCG method first performs  in \eqnok{eq:step_ex1} an
extrapolation step for the affine constraint $g$. Then in \eqnok{eq:step_ex2} it performs
 an extrapolation step based on the linear approximation of the constraint function $h$
given by
\begin{align}
l_{h_i}(\bar x, x) := h_i(\bar x) + \langle \nabla h_i(\bar x), x - \bar x\rangle, \label{eq:def_l_h_i} \\
l_h(\bar x, x) := (l_{h_1}(\bar x, x); \ldots, l_{h_d}(\bar x, x)). \label{eq:def_l_h}
\end{align}
Utilizing the extrapolated constraint values $\tilde g_k$ and $\tilde h_k$,
it then updates the dual variables $q_k$ and $r_k$ associated with the affine
constraint $g(x) = 0$ and the nonlinear constraints $h(x) \le 0$ in 
\eqnok{eq:step_dual1} and \eqnok{eq:step_dual2}, respectively.
With these updated dual variables and linear approximation
$l_f(x_{k-1}, x)$ and $l_h(x_{k-1}, x)$, it solves a linear optimization problem over $X$
to update the primal variable $p_k \in X$ in \eqnok{eq:step_lo}.
Finally, the output solution $x_k$ is computed as a convex combination of $x_{k-1}$
and $p_k$ in \eqnok{eq:step_out_x}.

\begin{algorithm}[H]
\caption{{\bf Co}nstraint-{\bf ex}trapolated {\bf C}onditional {\bf G}radient (CoexCG)}\label{algCoexCG}
\begin{algorithmic} 
\State Let the initial points $p_0 = p_{-1} \in X$, $x_0 = x_{-1} = x_{-2} \in X$,
$q_0 \in \bbr^m$ and $r_0 \in \bbr^d_+$ be given. Also let
the stepsize parameters $\lambda_k \ge 0$, $\tau_k \ge 0$ and $\alpha_k \in [0,1]$ be given.
\For{$k = 1$ \textbf{ to } $N$}
\begin{align}
\tilde g_k &= g(p_{k-1})+\lambda_k  [g(p_{k-1})-g(p_{k-2})], \label{eq:step_ex1}\\
\tilde h_k &= l_h(x_{k-2},p_{k-1})+\lambda_k [l_h(x_{k-2},p_{k-1})-l_h(x_{k-3},p_{k-2})], \label{eq:step_ex2}\\
q_k &= \argmin_{y \in \bbr^m} \{ \langle -\tilde g_k, y\rangle + \tfrac{\tau_k}{2}\|y-q_{k-1}\|_2^2 \}, \label{eq:step_dual1}\\
r_k &= \argmin_{ z \in \bbr^d_+} \{ \langle -\tilde h_k, z\rangle + \tfrac{\tau_k}{2}\|z-r_{k-1}\|_2^2 \}, \label{eq:step_dual2}\\
p_k &= \argmin_{x\in X} \{ l_f(x_{k-1}, x) + \langle g(x), q_k \rangle +  \langle l_h(x_{k-1},x),  r_k\rangle \}, \label{eq:step_lo}\\
x_k &= (1-\alpha_k)x_{k-1}+ \alpha_k p_k. \label{eq:step_out_x}
\end{align}
\EndFor
\end{algorithmic}
\end{algorithm}

Similar to the game interpretation developed in \cite{lan2015optimal,LanBook2020} for Nesterov's accelerated gradient method~\cite{Nest83-1},
the CoexCG method can be viewed as an iterative game performed by the primal and dual players to achieve an equilibrium of 
\eqnok{minmax}. The extrapolation steps in \eqnok{eq:step_ex1}-\eqnok{eq:step_ex2} are used to predict
the possible action (or its consequences) of the primal player in each iteration. Based on the prediction $(\tilde g_k, \tilde h_k)$,
the dual player updates the decision $q_k$ (resp., $r_k$) in order to maximize the profit $\langle \tilde g_k, y\rangle$ (resp., 
 $\langle \tilde h_k, z\rangle$), but not to move too far away from the previous decision $q_{k-1}$ (resp., $r_{k-1}$)
 by using the regularization $\tfrac{\tau_k}{2}\|y-q_{k-1}\|_2^2$ (resp., $\tfrac{\tau_k}{2}\|z-r_{k-1}\|_2^2$).
 After observing the dual player's decisions $(q_k, r_k)$,
 the primal player first determines $p_k$ in a greedy manner by minimizing the cost $l_f(x_{k-1}, x) + \langle g(x), q_k \rangle +  \langle l_h(x_{k-1},x),  r_k\rangle$,
 and then takes a correction step in \eqnok{eq:step_out_x} so that its decision $x_k$ is not dramatically different from the previous decision $x_{k-1}$.
 Similar to Nesterov's method as interpreted in \cite{lan2015optimal,LanBook2020},
 CoexCG employs an intelligent dual player who predicts the other player's decision before taking actions.
 However, the primal updates in \eqnok{eq:step_lo} and \eqnok{eq:step_out_x} for CoexCG are different from
 those in \cite{lan2015optimal,LanBook2020} since no projection is allowed, even though the spirit of
 not moving too far away from the previous decision $x_{k-1}$ remains the same.
An interesting observation to us is that, due to the lack of the projection for the primal player, the incorporation of the extrapolation (or prediction) steps of 
the dual player appears to be important to guarantee the convergence of the algorithm (see the discussion after Proposition~\ref{Prop_CoexCG} for more details).

It is interesting to build some connections between the CoexCG method and the
ConEx method in \cite{BoobDengLan19-1}.
In particular, by replacing the relations in \eqnok{eq:step_lo} and \eqnok{eq:step_out_x}
with 
\[
p_k = \argmin_{x\in X} \{ l_f(p_{k-1}, x) + \langle g(x), q_k \rangle +  \langle l_h(p_{k-1},x),  r_k\rangle + \tfrac{\eta_k}{2} \|x - p_{k-1}\|_2^2 \},
\]
then we essentially obtain the ConEx method. 
Comparing these relations, we observe that the CoexCG method differs from the ConEx method in the following few aspects.
Firstly, $p_t$ in CoexCG is computed by solving a linear optimization problem, while
the one in the ConEx method is computed by using a projection. 
The use of linear optimization enables the CoexCG method to generate
sparse solutions in feasible sets $X$ with a huge large number of
extreme points (see Section~\ref{sec_num}).
Secondly, the linear approximation models $l_f$ and $l_h$ in the ConEx method is built on the search point $p_{k-1}$,
while the one in the CoexCG method is built on $x_{k-1}$, or equivalently, the convex combination of
all previous search points $p_i$, $i=1, \ldots, k-1$. 
Using $l_f(x_{k-1}, x)$ and $l_h(x_{k-1}, x)$ in CoexCG  instead of $l_f(p_{k-1}, x)$ and $l_h(p_{k-1}, x)$ 
as in ConEx also seems to be critical to guarantee the convergence of the CoexCG algorithm. 

We need to add a few more remarks about the CoexCG method.
Firstly, by \eqnok{eq:step_dual1} and \eqnok{eq:step_dual2}, we can define
$q_k$ and $r_k$ equivalently as
\begin{align*}
q_k = q_{k-1} + \tfrac{1}{\tau_k} \tilde g_k \ \ \mbox{and} \ \
r_k = \max\{r_{k-1} + \tfrac{1}{\tau_k} \tilde h_k, 0\}.
\end{align*}
It is also worth noting that we can generalize the CoexCG method to
deal with conic inequality constraint $h(x) \in {\cal K}$, by simply replacing the constraint
 $z \in \bbr^d_+$ in \eqnok{eq:step_dual2} with $z \in -{\cal K}^*$. Here
 ${\cal K} \subset \bbr^l$ is a given closed convex cone and 
${\cal K}^*$ denotes its the dual cone.
 
Secondly, in addition to the primal output solution $x_k$ in \eqnok{eq:step_out_x},
we can also define the dual output solutions $y_k$ and $z_k$ as
\begin{align}
y_{k} &= (1-\alpha_k)y_{k-1}+ \alpha_k q_k, \label{eq:step_out_y}\\
z_{k} &= (1-\alpha_k)z_{k-1}+ \alpha_k r_k. \label{eq:step_out_z}
\end{align}
Different from $x_k$, these dual variables $y_k$ and $z_k$ do not participate
in the updating of any other search points. However,
both of them will be used intensively in the convergence analysis
of the CoexCG method.

Thirdly, even though we do not need to select the parameter $\eta_k$ when defining $p_k$ as in the ConEx method,
we do need to specify the stepsize parameter $\tau_k$ to update the dual variables
$q_k$ and $r_k$. We also need to determine the parameters $\lambda_k$ 
and $\alpha_k$, respectively, to define the extrapolation steps and
the output solution $x_k$. We will discuss the selection of these algorithmic
parameters after establishing some general convergence properties
of the CoexCG method.

\vgap

Our goal in the remaining part of this subsection is to establish
the convergence of the CoexCG method.
Let $x_k, y_k$, and $z_k$ be defined in \eqnok{eq:step_out_x}, \eqnok{eq:step_out_y},
and \eqnok{eq:step_out_z}. Throughout this section,
we denote $w_k \equiv (x_k,y_k,z_k)$ and $w \equiv (x,y,z)$, and define the gap function $Q(w_k,w)$ as
\beq \label{eq:def_gap_function}
Q(w_k,w) := f(x_k)-f(x) +\langle g(x_k),y \rangle -\langle g(x),y_k \rangle +\langle h(x_k),z \rangle -\langle h(x),z_k \rangle.
\eeq

We start by stating some well-known technical results that have been used in
the convergence analysis of many first-order methods.
The first result, often referred to ``three-point lemma" (see, e.g., Lemma 3.1 of \cite{LanBook2020}), 
characterizes the optimality conditions of
\eqnok{eq:step_dual1} and \eqnok{eq:step_dual2}.

\begin{lemma} \label{lemma_projection}
Let $q_{k}$ and $r_k$ be defined in \eqnok{eq:step_dual1} and \eqnok{eq:step_dual2},
respectively. Then,
\begin{align}
 \langle -\tilde g_k, q_{k} - y \rangle + \tfrac{\tau_k}{2} \|q_{k} - q_{k-1}\|_2^2 
 \le \tfrac{\tau_k}{2} \|y - q_{k-1}\|_2^2 - \tfrac{\tau_k}{2} \|y - q_{k}\|_2^2, \forall y \in \bbr^m, \label{opt_cond_dual1}\\
  \langle -\tilde h_k, r_{k} - z \rangle + \tfrac{\tau_k}{2} \|r_{k} - r_{k-1}\|_2^2 
 \le \tfrac{\tau_k}{2} \|z - r_{k-1}\|_2^2 - \tfrac{\tau_k}{2} \|z - r_{k}\|_2^2, \forall z \in \bbr^d_+.  \label{opt_cond_dual2}
\end{align}
\end{lemma}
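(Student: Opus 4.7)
The plan is to derive both inequalities from the first-order optimality conditions of the respective strongly convex quadratic subproblems in (\ref{eq:step_dual1})--(\ref{eq:step_dual2}), combined with the standard polarization identity
\begin{equation*}
\langle a - b,\, c - a\rangle = \tfrac{1}{2}\|c - b\|_2^2 - \tfrac{1}{2}\|c - a\|_2^2 - \tfrac{1}{2}\|a - b\|_2^2.
\end{equation*}
Both bounds share the same structure, so I would treat the (slightly more general) constrained case first and recover the unconstrained one as a specialization.

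For $r_k$, the minimization is over the closed convex cone $\bbr^d_+$. The first-order condition takes the variational-inequality form $\langle -\tilde h_k + \tau_k(r_k - r_{k-1}),\, z - r_k\rangle \ge 0$ for every $z \in \bbr^d_+$, which rearranges to $\langle -\tilde h_k,\, r_k - z\rangle \le \tau_k \langle r_{k-1} - r_k,\, r_k - z\rangle$. Applying the polarization identity with $a = r_k$, $b = r_{k-1}$, $c = z$ to the right-hand side and then moving $\tfrac{\tau_k}{2}\|r_k - r_{k-1}\|_2^2$ to the left-hand side produces exactly (\ref{opt_cond_dual2}). For $q_k$ the subproblem is unconstrained in $\bbr^m$, so the first-order condition simplifies to the equality $-\tilde g_k = \tau_k(q_k - q_{k-1})$; substituting this directly into $\langle -\tilde g_k,\, q_k - y\rangle$ and invoking the same identity with $a = q_k$, $b = q_{k-1}$, $c = y$ gives (\ref{opt_cond_dual1}), in fact as an equality.

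I do not anticipate any genuine obstacle: the whole argument reduces to a one-line first-order optimality condition followed by one algebraic identity. The only point requiring care is sign bookkeeping — verifying that after rearranging the variational inequality, the inner product $\langle r_{k-1} - r_k,\, r_k - z\rangle$ really matches the $\langle a - b,\, c - a\rangle$ pattern of the polarization identity, which is routine.
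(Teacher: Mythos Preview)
Your proposal is correct and is exactly the standard ``three-point lemma'' argument that the paper invokes by citing Lemma~3.1 of \cite{LanBook2020} without reproducing a proof. One trivial slip: in the unconstrained case the stationarity condition reads $\tilde g_k = \tau_k(q_k - q_{k-1})$, i.e., $-\tilde g_k = \tau_k(q_{k-1} - q_k)$, not $-\tilde g_k = \tau_k(q_k - q_{k-1})$ as you wrote; with the corrected sign the polarization identity yields \eqref{opt_cond_dual1} as an equality, just as you claim.
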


The following result helps us to take telescoping sums (see Lemma 3.17 of \cite{LanBook2020}).
\begin{lemma}\label{Lemma 2}
Let $\alpha_k\in(0,1], k = 0,1,2,\ldots$, be given and denote 
\begin{equation}\label{Def_Gamma}
\Gamma_k = \left\{
                     \begin{array}{ll}
                       1, & \hbox{if $k=1$;} \\
                       (1-\alpha_k)\Gamma_{k-1}, & \hbox{if $k>1$.}
                     \end{array}
                   \right.
\end{equation}
If $\{\Delta_k\}$ satisfies
$\Delta_{k+1}\leq (1-\alpha_k)\Delta_k + B_k, \forall k \geq 1,$
then we have
$\tfrac{\Delta_{k+1}}{\Gamma_k}\leq (1-\alpha_1)\Delta_1 + \tsum_{i=1}^k\tfrac{B_i}{\Gamma_i}.$
\end{lemma}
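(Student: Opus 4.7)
The plan is to prove Lemma~\ref{Lemma 2} by a direct telescoping argument after dividing the recursion by $\Gamma_k$. The whole statement is essentially a bookkeeping identity, so I expect no serious obstacle; the only thing to be careful about is the base case $k=1$, since the definition of $\Gamma_k$ is split between $k=1$ and $k>1$.

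First I would observe the key algebraic identity built into the definition \eqref{Def_Gamma}: for every $k\ge 2$, $(1-\alpha_k)/\Gamma_k = 1/\Gamma_{k-1}$, by direct rearrangement of $\Gamma_k = (1-\alpha_k)\Gamma_{k-1}$. Then I would divide the hypothesized inequality $\Delta_{k+1}\le (1-\alpha_k)\Delta_k + B_k$ by $\Gamma_k$ to obtain, for $k\ge 2$,
\begin{equation*}
\frac{\Delta_{k+1}}{\Gamma_k} \;\le\; \frac{\Delta_k}{\Gamma_{k-1}} + \frac{B_k}{\Gamma_k}.
\end{equation*}
For the base case $k=1$, since $\Gamma_1 = 1$, the hypothesized inequality directly gives $\Delta_2/\Gamma_1 \le (1-\alpha_1)\Delta_1 + B_1/\Gamma_1$.

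Next I would sum the displayed inequality from $i=2$ to $k$, which telescopes on the left-hand side to $\Delta_{k+1}/\Gamma_k - \Delta_2/\Gamma_1$. Combining this with the base-case bound for $\Delta_2/\Gamma_1$ yields
\begin{equation*}
\frac{\Delta_{k+1}}{\Gamma_k} \;\le\; (1-\alpha_1)\Delta_1 + \frac{B_1}{\Gamma_1} + \sum_{i=2}^{k} \frac{B_i}{\Gamma_i} \;=\; (1-\alpha_1)\Delta_1 + \sum_{i=1}^{k} \frac{B_i}{\Gamma_i},
\end{equation*}
which is the claimed inequality. The case $k=1$ is covered directly by the base-case bound since the sum $\sum_{i=2}^{k}$ is empty.

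The only subtle point, and therefore the one I would emphasize in the write-up, is the mismatch between the definition $\Gamma_1 = 1$ and the recursion $\Gamma_k = (1-\alpha_k)\Gamma_{k-1}$: this is precisely why the leading term on the right-hand side is $(1-\alpha_1)\Delta_1$ rather than $\Delta_1/\Gamma_0$, and why the base case has to be handled separately before the telescoping sum from $i=2$ to $k$ is applied. Everything else is routine manipulation, so no induction beyond the one-step telescoping is needed.
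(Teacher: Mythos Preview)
Your proof is correct and is exactly the standard telescoping argument one expects here. The paper does not actually supply a proof of this lemma; it simply states the result and refers the reader to Lemma~3.17 of \cite{LanBook2020}, so there is nothing further to compare.
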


We now establish an important recursion of the CoexCG method.
\begin{proposition}\label{Prop_CoexCG}
For any $k>1$, we have
\begin{align*}
Q(w_k,w) &\leq  (1-\alpha_k) Q(w_{k-1},w)+ \tfrac{(L_f+z^T L_h)\alpha_k^2 D_X^2}{2} +\tfrac{\alpha_k\lambda_k^2 (9 \bar M_h^2+\|A\|^2) D_X^2}{2 \tau_k}\\
&\quad + \alpha_k [ \langle A(p_k-p_{k-1}), y- q_k\rangle - \lambda_k\langle A (p_{k-1}-p_{k-2}), y -q_{k-1}\rangle]\\
& \quad + \alpha_k[\langle l_h(x_{k-1},p_k)-l_h(x_{k-2},p_{k-1}), z- r_k\rangle - \lambda_k \langle l_h(x_{k-2},p_{k-1})-l_h(x_{k-3},p_{k-2}), z- r_{k-1}\rangle] \\
&\quad  + \tfrac{\alpha_k\tau_k}{2}[\|y-q_{k-1}\|_2^2 -\|y-q_k\|_2^2 + \|z-r_{k-1}\|_2^2 -\|z-r_k\|_2^2], \ \forall w \in X \times \bbr^m \times \bbr^d_+,
\end{align*}
where $D_X$ is defined in \eqnok{eq:def_D_X}.
\end{proposition}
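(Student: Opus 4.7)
The plan is to split the gap $Q(w_k, w)$ into its three natural pieces---$f(x_k)-f(x)$, $\langle g(x_k),y\rangle - \langle g(x),y_k\rangle$, and $\langle h(x_k),z\rangle - \langle h(x),z_k\rangle$---and reduce the problem to the one-step recursion $Q(w_k, w) \le (1-\alpha_k) Q(w_{k-1}, w) + \tfrac{(L_f + z^T L_h)\alpha_k^2 D_X^2}{2} + \alpha_k T_k$, where $T_k$ collects the primal--dual residuals. For the objective piece I would use $L_f$-smoothness of $f$ together with $x_k = (1-\alpha_k)x_{k-1} + \alpha_k p_k$ to obtain $f(x_k) \le (1-\alpha_k) f(x_{k-1}) + \alpha_k\, l_f(x_{k-1}, p_k) + \tfrac{L_f \alpha_k^2 D_X^2}{2}$, combined with the convexity bound $l_f(x_{k-1},x) \le f(x)$. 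For the nonlinear piece, componentwise $L_{h,i}$-smoothness weighted by $z_i \ge 0$ yields $\langle h(x_k), z\rangle \le (1-\alpha_k)\langle h(x_{k-1}), z\rangle + \alpha_k \langle l_h(x_{k-1}, p_k), z\rangle + \tfrac{(z^T L_h)\alpha_k^2 D_X^2}{2}$, while $-\langle h(x), z_k\rangle$ is handled via $z_k = (1-\alpha_k)z_{k-1} + \alpha_k r_k$ and $\langle l_h(x_{k-1}, x), r_k\rangle \le \langle h(x), r_k\rangle$. The affine piece telescopes exactly by linearity of $g$ and of $y_k = (1-\alpha_k)y_{k-1}+\alpha_k q_k$. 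Summing these three bounds leaves $T_k = [l_f(x_{k-1}, p_k) - l_f(x_{k-1}, x)] + [\langle g(p_k), y\rangle - \langle g(x), q_k\rangle] + [\langle l_h(x_{k-1}, p_k), z\rangle - \langle l_h(x_{k-1}, x), r_k\rangle]$.

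Next I would invoke the optimality of $p_k$ in \eqnok{eq:step_lo}; since the objective there is linear in $x$, comparing $p_k$ against $x$ gives $l_f(x_{k-1}, p_k) - l_f(x_{k-1}, x) \le \langle g(x)-g(p_k), q_k\rangle + \langle l_h(x_{k-1}, x) - l_h(x_{k-1}, p_k), r_k\rangle$, and substituting this into $T_k$ collapses the $q_k$ and $r_k$ inner products to yield the clean expression $T_k \le \langle g(p_k), y-q_k\rangle + \langle l_h(x_{k-1}, p_k), z-r_k\rangle$. Then I would insert the extrapolated quantities via $g(p_k) = \tilde g_k + [g(p_k) - \tilde g_k]$ and $l_h(x_{k-1}, p_k) = \tilde h_k + [l_h(x_{k-1}, p_k) - \tilde h_k]$. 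The $\langle \tilde g_k, y-q_k\rangle$ and $\langle \tilde h_k, z-r_k\rangle$ pieces are controlled by Lemma~\ref{lemma_projection} applied to \eqnok{eq:step_dual1}--\eqnok{eq:step_dual2}, producing the telescoping $\tfrac{\tau_k}{2}[\|y-q_{k-1}\|_2^2 - \|y-q_k\|_2^2 + \|z-r_{k-1}\|_2^2 - \|z-r_k\|_2^2]$ together with spare negative $-\tfrac{\tau_k}{2}\|q_k-q_{k-1}\|_2^2$ and $-\tfrac{\tau_k}{2}\|r_k-r_{k-1}\|_2^2$ contributions.

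Expanding $g(p_k) - \tilde g_k = A(p_k - p_{k-1}) - \lambda_k A(p_{k-1} - p_{k-2})$ and $l_h(x_{k-1}, p_k) - \tilde h_k = [l_h(x_{k-1}, p_k) - l_h(x_{k-2}, p_{k-1})] - \lambda_k [l_h(x_{k-2}, p_{k-1}) - l_h(x_{k-3}, p_{k-2})]$, and writing $y-q_k = (y-q_{k-1}) + (q_{k-1}-q_k)$ and $z-r_k = (z-r_{k-1}) + (r_{k-1}-r_k)$ inside the $\lambda_k$-weighted pieces, exposes precisely the two ``extrapolation difference'' brackets appearing in the claim, plus two cross residuals $-\lambda_k \langle A(p_{k-1}-p_{k-2}), q_{k-1}-q_k\rangle$ and $-\lambda_k \langle l_h(x_{k-2}, p_{k-1}) - l_h(x_{k-3}, p_{k-2}), r_{k-1}-r_k\rangle$. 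Young's inequality with weight $\tau_k$ bounds these by $\tfrac{\lambda_k^2 \|A\|^2 D_X^2}{2\tau_k} + \tfrac{\tau_k}{2}\|q_k-q_{k-1}\|_2^2$ and $\tfrac{9\lambda_k^2 \bar M_h^2 D_X^2}{2\tau_k} + \tfrac{\tau_k}{2}\|r_k-r_{k-1}\|_2^2$ respectively, so that the positive quadratic terms cancel exactly with the negative ones produced by Lemma~\ref{lemma_projection}. The remaining constant collects into $\tfrac{\lambda_k^2(9\bar M_h^2 + \|A\|^2)D_X^2}{2\tau_k}$, and multiplying the entire $T_k$ bound by $\alpha_k$ yields the claimed inequality.

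The main---and only nontrivial---obstacle is the estimate $\|l_h(x_{k-2}, p_{k-1}) - l_h(x_{k-3}, p_{k-2})\|_2 \le 3\bar M_h D_X$, which is what produces the constant $9\bar M_h^2$. For each $i$ I would decompose $l_{h_i}(x_{k-2}, p_{k-1}) - l_{h_i}(x_{k-3}, p_{k-2}) = [h_i(x_{k-2}) - h_i(x_{k-3})] + \langle \nabla h_i(x_{k-2}), p_{k-1} - x_{k-2}\rangle - \langle \nabla h_i(x_{k-3}), p_{k-2} - x_{k-3}\rangle$, and bound each of the three pieces by $M_{h,i} D_X$ using \eqnok{eq:def_Lip_h} (which also implies that $h_i$ is $M_{h,i}$-Lipschitz on $X$). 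Squaring, summing over $i$, and invoking \eqnok{eq:def_Lip_h_combined} yields exactly $9 \bar M_h^2 D_X^2$, after which the remainder of the argument is pure bookkeeping.
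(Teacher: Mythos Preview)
Your proposal is correct and follows essentially the same route as the paper's proof: smoothness of $f$ and $h_i$ along the segment $x_k=(1-\alpha_k)x_{k-1}+\alpha_k p_k$, optimality of $p_k$ in \eqnok{eq:step_lo} combined with convexity, the three-point inequalities of Lemma~\ref{lemma_projection}, the split $y-q_k=(y-q_{k-1})+(q_{k-1}-q_k)$ (and likewise for $z$) on the $\lambda_k$-weighted pieces, Young's inequality to absorb the cross residuals into the spare $-\tfrac{\tau_k}{2}\|q_k-q_{k-1}\|_2^2$ and $-\tfrac{\tau_k}{2}\|r_k-r_{k-1}\|_2^2$, and the three-term decomposition of $l_{h_i}(x_{k-2},p_{k-1})-l_{h_i}(x_{k-3},p_{k-2})$ to produce the constant $9\bar M_h^2$. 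The only cosmetic difference is that you carry the linearizations $l_f(x_{k-1},x)$ and $l_h(x_{k-1},x)$ into $T_k$ and invoke convexity afterward, whereas the paper applies convexity immediately to write the residual as $l_f(x_{k-1},p_k)-f(x)$ and $\langle l_h(x_{k-1},p_k),z\rangle-\langle h(x),r_k\rangle$; the two orderings are equivalent.
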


\begin{proof}
It follows 
from the smoothness of $f$ and $h$ (e.g., Lemma 3.2 of \cite{LanBook2020}) 
and the definition of $x_k$ in \eqnok{eq:step_out_x}
that
\begin{align*}
f(x_k)&\leq  l_f(x_{k-1}, x_k) + \tfrac{L_f}{2}\|x_k-x_{k-1}\|^2 \\
 &=  (1-\alpha_k) l_f(x_{k-1}, x_{k-1}) + \alpha_k l_f(x_{k-1}, p_k) + \tfrac{L_f\alpha_k^2}{2}\|p_k-x_{k-1}\|^2 \\
 & =  (1-\alpha_k) f(x_{k-1}) +\alpha_kl_f(x_{k-1}, p_k)+ \tfrac{L_f\alpha_k^2}{2}\|p_k-x_{k-1}\|^2 . \\
 h_i(x_k) &\leq  (1-\alpha_k)h_i(x_{k-1}) + \alpha_k l_{h_i}(x_{k-1},p_k) + \tfrac{L_{h,i} \alpha_k^2}{2}\|p_k - x_{k-1}\|^2.
\end{align*}
Using the above two relations in the definition of $Q(w_k,w)$ in \eqnok{eq:def_gap_function}, we have for any $w \equiv (x, y, z) \in X \times \bbr^m \times \bbr^d_+$,
\begin{align*}
Q(w_k,w) &= f(x_k)-f(x) +\langle g(x_k),y \rangle -\langle g(x),y_k \rangle +\langle h(x_k),z \rangle -\langle h(x),z_k \rangle \\
&\le  (1-\alpha_k)  f(x_{k-1}) + \alpha_k l_f(x_{k-1}, p_k)
-f(x) +\langle g(x_k),y \rangle -\langle g(x),y_k \rangle \\
&\quad +\langle (1-\alpha_k)h(x_{k-1}) + \alpha_k l_{h}(x_{k-1},p_k)),z \rangle -\langle h(x),z_k \rangle\\
& \quad+ \tfrac{(L_f + z^T L_h) \alpha_k^2}{2} \|p_k - x_{k-1}\|^2 \\
&= (1-\alpha_k) Q(w_{k-1},w) + \tfrac{(L_f + z^T L_h) \alpha_k^2}{2} \|p_k - x_{k-1}\|^2 \\
&\quad +\alpha_k[ l_f(x_{k-1}, p_k) - f(x) +  \langle g(p_k) ,y\rangle - \langle g(x) ,q_k\rangle 
+ \langle l_h(x_{k-1},p_k) ,z\rangle - \langle h(x) ,r_{k}\rangle ].
\end{align*}
Moreover, by the definition of $x_k$ in \eqnok{eq:step_out_x} and the convexity of $f$ and $h_i$, we have
\begin{align*}
&l_f(x_{k-1}, p_k) + \langle g(p_k), q_k\rangle + \langle l_h(x_{k-1},p_k), r_k\rangle\\
&\le l_f(x_{k-1}, x) + \langle g(x), q_k\rangle + \langle l_h(x_{k-1},x), r_k\rangle\\
&\le f(x) + \langle g(x), q_k\rangle + \langle h(x), r_k\rangle, \ \forall x \in X.
\end{align*}
Combining the above two relations, we obtain
\begin{align}
Q(w_k,w) &\le  (1-\alpha_k) Q(w_{k-1},w) + \tfrac{(L_f + z^T L_h) \alpha_k^2}{2} \|p_k - x_{k-1}\|^2 \nn \\
&\quad + \alpha_k [\langle g(p_k), y - q_k\rangle + \langle l_h(x_{k-1},p_k) ,z - r_k \rangle ] \nn \\
&\le 1-\alpha_k) Q(w_{k-1},w) + \tfrac{(L_f + z^T L_h) \alpha_k^2 D_X^2}{2} \nn \\
&\quad + \alpha_k [\langle g(p_k), y - q_k\rangle + \langle l_h(x_{k-1},p_k) ,z - r_k \rangle ], \ \forall w \in X \times \bbr^m \times \bbr^d_+. \label{eq:recursion_on_gap_function}
\end{align}
Multiplying both sides of \eqnok{opt_cond_dual1} and \eqnok{opt_cond_dual2} by $\alpha_k$ and
summing them up with the above inequality, we
have 
\begin{align}
Q(w_k,w) & \leq  (1-\alpha_k) Q(w_{k-1},w)+ \tfrac{(L_f+z^TL_h)\alpha_k^2 D_X^2}{2} \nn \\
&\quad +\alpha_k \langle g(p_k)- \tilde g_k), y- q_k\rangle +\alpha_k \langle l_h(x_{k-1},p_k)- \tilde h_k, z - r_k\rangle\nn \\
&\quad + \tfrac{\alpha_k\tau_k}{2}[\|y-q_{k-1}\|_2^2 -\|y-q_k\|_2^2-\|q_k-q_{k-1}\|_2^2]  \nn \\
 &\quad + \tfrac{\alpha_k\tau_k}{2}[\|z-r_{k-1}\|_2^2 -\|z-r_k\|_2^2-\|r_k-r_{k-1}\|_2^2], \ \forall w \in X \times \bbr^m \times \bbr^d_+. \label{eq:Coex_temp1}
\end{align}
Now observe that by the definition of $\tilde g_k$ in \eqnok{eq:step_ex1} and
the fact that $g(x) = Ax - b$, we have
\begin{align}
&\langle g(p_k)- \tilde g_k), y- q_k\rangle -\tfrac{\tau_k}{2} \|q_k-q_{k-1}\|_2^2 \nn \\
 &= \langle A[(p_k - p_{k-1})-\lambda_k (p_{k-1}-p_{k-2})], y - q_k \rangle-\tfrac{\tau_k}{2} \|q_k-q_{k-1}\|_2^2 \nn \\
& = \langle A(p_k-p_{k-1}), y -q_k\rangle - \lambda_k\langle A (p_{k-1}-p_{k-2}), y - q_{k-1}\rangle\nn \\
& \quad + \lambda_k\langle A (p_{k-1}-p_{k-2}), q_k- q_{k-1}\rangle-\tfrac{\tau_k}{2} \|q_k-q_{k-1}\|_2^2 \nn \\
&\leq  \langle A(p_k-p_{k-1}), y- q_k\rangle - \lambda_k\langle A (p_{k-1}-p_{k-2}), y - q_{k-1}\rangle \nn\\
& \quad +\tfrac{\lambda_k^2}{2\tau_k}\|A\|^2\|p_k-p_{k-1}\|_2^2 \nn \\
&\le \langle A(p_k-p_{k-1}), y- q_k\rangle - \lambda_k\langle A (p_{k-1}-p_{k-2}), y - q_{k-1}\rangle + \tfrac{\lambda_k^2}{2\tau_k}\|A\|^2 D_X^2, \label{eq:Coex_temp2}
\end{align}
where the first inequality follows from Young's inequality and the last one follows from the definition of $D_X$ in \eqnok{eq:def_D_X}.
In addition, by the definition of $\tilde h_k$ in \eqnok{eq:step_ex2}, we have
\begin{align}
&\langle l_h(x_{k-1},p_k)- \tilde h_k, z - r_k\rangle  -\tfrac{\tau_k}{2} \|r_k-r_{k-1}\|_2^2 \nn \\
& \leq  \langle l_h(x_{k-1},p_k)- l_h(x_{k-2},p_{k-1}), z - r_k\rangle -\lambda_k\langle l_h(x_{k-2},p_{k-1})- l_h(x_{k-3},p_{k-2}), z - r_{k-1}\rangle \nn \\
& \quad + \lambda_k\langle l_h(x_{k-2},p_{k-1})- l_h(x_{k-3},p_{k-2}), r_k - r_{k-1}\rangle - \tfrac{\tau_k}{2} \|r_k-r_{k-1}\|_2^2 \nn\\
& \leq \langle l_h(x_{k-1},p_k)- l_h(x_{k-2},p_{k-1}), z - r_k\rangle \nn \\
& \quad -\lambda_k\langle l_h(x_{k-2},p_{k-1})- l_h(x_{k-3},p_{k-2}), z - r_{k-1}\rangle +\tfrac{9\lambda_k^2 \bar M_h^2 D_X^2}{2\tau_k}, \label{eq:Coex_temp3}
\end{align}
where the last inequality follows from
\begin{align}
&\lambda_k\langle l_h(x_{k-2},p_{k-1})- l_h(x_{k-3},p_{k-2}), r_k - r_{k-1}\rangle - \tfrac{\tau_k}{2} \|r_k-r_{k-1}\|_2^2 \nn \\
&\le \tfrac{\lambda_k^2}{2\tau_k} \tsum_{i=1}^d [ l_{h_i}(x_{k-2},p_{k-1})- l_{h_i}(x_{k-3},p_{k-2})]^2 \nn \\
&= \tfrac{\lambda_k^2}{2\tau_k}\tsum_{i=1}^d [ h_i(x_{k-2})-h_i(x_{k-3}) + \langle \nabla h_i(x_{k-2}),p_{k-1}-x_{k-2}\rangle + \langle \nabla h_i(x_{k-3}),p_{k-2}-x_{k-3}\rangle]^2 \nn \\
&\le \tfrac{9\lambda_k^2D_X^2}{2\tau_k}\tsum_{i=1}^dM_{h,i}^2 
= \tfrac{9\lambda_k^2 \bar M_h^2 D_X^2}{2 \tau_k}. \label{eq:Coex_temp4}
\end{align}
The result then follows by plugging relations \eqnok{eq:Coex_temp2} and \eqnok{eq:Coex_temp3} into \eqnok{eq:Coex_temp1}.
\end{proof}

\vgap

We add some comments about the importance of the extrapolation steps
in the proposed CoexCG method. Without these steps (i.e., $\lambda_k = 0$ in \eqnok{eq:step_ex1} and \eqnok{eq:step_ex2}),
probably we can not  even guarantee the convergence of the CoexCG algorithm.
As we can see from the proof of Proposition~\ref{Prop_CoexCG},
if $\lambda_k = 0$, it is not clear how to
take care of the inner product terms $\langle A(p_k - p_{k-1}, y - q_k\rangle$
and $\langle  l_h(x_{k-1},p_k)-l_h(x_{k-2},p_{k-1}), z- r_k\rangle$.
The error caused by these terms may accumulate.

\vgap

We are now ready to establish the main convergence properties for the CoexCG method.

\begin{theorem} \label{The_Coex_main}
Let $\Gamma_k$ be defined in \eqref{Def_Gamma} and assume that
the algorithmic parameters $\alpha_k, \tau_k$ and $\lambda_k$ in the CoexCG method satisfy
\begin{equation}\label{cond1}
\alpha_1 = 1, \ \tfrac{\lambda_k\alpha_k}{\Gamma_k} = \tfrac{\alpha_{k-1}}{\Gamma_{k-1}} \text{ and } \tfrac{\alpha_k\tau_k}{\Gamma_k} \leq \tfrac{\alpha_{k-1}\tau_{k-1}}{\Gamma_{k-1}},
\forall k \ge 2.
\end{equation}
Then we have
\begin{equation}\label{thm1_gap_result}
\begin{aligned}
Q(w_N,w) &\leq  \Gamma_N\tsum_{k=1}^N\left[\tfrac{(L_f+z^T L_h)\alpha_k^2D_X^2}{2\Gamma_k} +\tfrac{\alpha_k\lambda_k^2(9 \bar M_h^2+\|A\|^2)D_X^2}{2\tau_k\Gamma_k}\right] \\
&\quad +\tfrac{\alpha_N(9\bar M_h^2+\|A\|^2) D_X^2}{2\tau_N} +\tfrac{\tau_1\Gamma_N}{2}\|y-q_0\|_2^2 +\tfrac{\tau_1\Gamma_N}{2}\|z-r_0\|_2^2,
\ \forall w \in X \times \bbr^m \times \bbr^d_+,
\end{aligned}
\end{equation}
where $D_X$ is defined in \eqnok{eq:def_D_X}. As a consequence, we have
\begin{align}
f(x_N) - f(x^*) &\le  \Gamma_N\tsum_{k=1}^N\left[\tfrac{L_f \alpha_k^2D_X^2}{2\Gamma_k} +\tfrac{\alpha_k\lambda_k^2(9 \bar M_h^2+\|A\|^2)D_X^2}{2\tau_k\Gamma_k}\right] \nn \\
&\quad + \tfrac{\alpha_N(9\bar M_h^2+\|A\|^2) D_X^2}{2\tau_N} +\tfrac{\tau_1\Gamma_N}{2}(\|q_0\|_2^2 + \|r_0\|_2^2) \label{thm1_obj_result}
\end{align}
and
\begin{align}
\|g(x_N)\|_2 + \|[h(x_N)]_+\|_2 &\le   \Gamma_N\tsum_{k=1}^N\left[\tfrac{[L_f+(\|z^*\|_2+1) \bar L_h]\alpha_k^2D_X^2}{2\Gamma_k} +\tfrac{\alpha_k\lambda_k^2(9 \bar M_h^2+\|A\|^2)D_X^2}{2\tau_k\Gamma_k}\right]
+\tfrac{\alpha_N(9\bar M_h^2+\|A\|^2) D_X^2}{2\tau_N} \nn\\
&\quad + \tau_1\Gamma_N [(\|y^*\|_2+1)^2 + \|q_0\|_2^2 + (\|z^*\|_2+1)^2 + \|r_0\|_2^2],
\label{thm1_feas_result}
\end{align}
where $(x^*, y^*, z^*)$ denotes a triple of optimal solutions for problem~\eqnok{minmax}.
\end{theorem}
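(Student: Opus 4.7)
The plan is to telescope the one-step bound in Proposition~\ref{Prop_CoexCG} via Lemma~\ref{Lemma 2}, producing \eqref{thm1_gap_result}, and then to extract \eqref{thm1_obj_result} and \eqref{thm1_feas_result} by making two specific choices of the test point $w=(x,y,z)$ on the left-hand side.

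First, I would divide the recursion of Proposition~\ref{Prop_CoexCG} by $\Gamma_k$ and sum over $k=1,\ldots,N$. Since $\Gamma_k=(1-\alpha_k)\Gamma_{k-1}$ for $k>1$ and $\alpha_1=1$, Lemma~\ref{Lemma 2} reduces the task to controlling the remainder terms. The stepsize identity $\lambda_k\alpha_k/\Gamma_k=\alpha_{k-1}/\Gamma_{k-1}$ from \eqref{cond1} is exactly what is needed to turn each of the two cross-term pairs $\alpha_k[\langle\cdot\rangle_k-\lambda_k\langle\cdot\rangle_{k-1}]$ into a telescoping difference; the $k=1$ boundary of the lagged halves vanishes automatically because the initialization $p_0=p_{-1}$ and $x_0=x_{-1}=x_{-2}$ forces $A(p_0-p_{-1})=0$ and $l_h(x_{-1},p_0)-l_h(x_{-2},p_{-1})=0$, leaving only the $k=N$ leading halves. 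Similarly, the monotonicity $\alpha_k\tau_k/\Gamma_k\le\alpha_{k-1}\tau_{k-1}/\Gamma_{k-1}$ collapses the dual-distance telescopes to the $k=1$ contribution $\tfrac{\tau_1}{2}(\|y-q_0\|_2^2+\|z-r_0\|_2^2)$ together with a negative $k=N$ tail $-\tfrac{\alpha_N\tau_N}{2\Gamma_N}(\|y-q_N\|_2^2+\|z-r_N\|_2^2)$. The surviving $k=N$ cross terms are then absorbed by Young's inequality with weight $\tau_N$, using $\|A(p_N-p_{N-1})\|_2\le\|A\|D_X$ and the componentwise $3M_{h,i}D_X$ bound exploited in \eqref{eq:Coex_temp4} (whose squares sum to $9\bar M_h^2 D_X^2$); the quadratic pieces cancel against the preserved negative tail, leaving the leftover constant $\tfrac{\alpha_N(9\bar M_h^2+\|A\|^2)D_X^2}{2\Gamma_N\tau_N}$. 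Multiplying through by $\Gamma_N$ yields \eqref{thm1_gap_result}.

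For \eqref{thm1_obj_result} I specialize to $w=(x^*,0,0)$: by \eqref{eq:def_gap_function} together with $g(x^*)=0$ and $h(x^*)\le 0$, $z_N\ge 0$, one has $Q(w_N,(x^*,0,0))\ge f(x_N)-f(x^*)$, while $z=0$ zeroes out the $z^T L_h$ factor on the right. For \eqref{thm1_feas_result} I use the standard feasibility perturbation $\hat y=y^*+g(x_N)/\|g(x_N)\|_2$ and $\hat z=z^*+[h(x_N)]_+/\|[h(x_N)]_+\|_2$ (with the natural convention when a denominator vanishes), which satisfies $\hat z\in\bbr^d_+$. Since $\langle h(x_N),[h(x_N)]_+\rangle=\|[h(x_N)]_+\|_2^2$, this gives $\langle g(x_N),\hat y\rangle=\langle g(x_N),y^*\rangle+\|g(x_N)\|_2$ and $\langle h(x_N),\hat z\rangle=\langle h(x_N),z^*\rangle+\|[h(x_N)]_+\|_2$; combined with the saddle inequality $f(x_N)+\langle g(x_N),y^*\rangle+\langle h(x_N),z^*\rangle\ge f(x^*)$, one obtains $Q(w_N,(x^*,\hat y,\hat z))\ge\|g(x_N)\|_2+\|[h(x_N)]_+\|_2$. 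Plugging into \eqref{thm1_gap_result} together with $\|\hat y\|_2\le\|y^*\|_2+1$, $\|\hat z\|_2\le\|z^*\|_2+1$, $\hat z^T L_h\le(\|z^*\|_2+1)\bar L_h$, and $\|\hat y-q_0\|_2^2\le 2((\|y^*\|_2+1)^2+\|q_0\|_2^2)$ (and similarly for $\hat z$) produces \eqref{thm1_feas_result}.

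The main obstacle I expect is the bookkeeping in the telescoping step: matching the cross-term and dual-distance telescopes, checking that the initialization neutralizes the $k=1$ boundary of the lagged halves, and sizing the Young's-inequality constants at $k=N$ so that the negative dual-distance tails cancel exactly while leaving the residual constant in the precise form appearing in \eqref{thm1_gap_result}. The feasibility bound further rests on the routine but crucial observation that $z^T L_h$ must be bounded by Cauchy--Schwarz against $\hat z$ rather than $z^*$, which is what produces the $(\|z^*\|_2+1)\bar L_h$ factor in \eqref{thm1_feas_result}.
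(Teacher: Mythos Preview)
Your proposal is correct and follows essentially the same route as the paper: telescope Proposition~\ref{Prop_CoexCG} via Lemma~\ref{Lemma 2} using \eqref{cond1}, absorb the surviving $k=N$ cross terms with Young's inequality against the retained negative dual-distance tail, then specialize $w$ to $(x^*,0,0)$ for \eqref{thm1_obj_result} and to a perturbed dual point for \eqref{thm1_feas_result}. The only cosmetic difference is that the paper takes $\hat y=(\|y^*\|_2+1)\,g(x_N)/\|g(x_N)\|_2$ and $\hat z=(\|z^*\|_2+1)\,[h(x_N)]_+/\|[h(x_N)]_+\|_2$ rather than your additive perturbations $y^*+g(x_N)/\|g(x_N)\|_2$ and $z^*+[h(x_N)]_+/\|[h(x_N)]_+\|_2$, but both choices satisfy the same norm bounds and yield \eqref{thm1_feas_result} identically.
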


\begin{proof}
It follows from Lemma~\ref{Lemma 2} and Proposition~\ref{Prop_CoexCG}  that
\begin{align*}
\tfrac{Q(w_N,w)}{\Gamma_N} \leq&  (1-\alpha_1) Q(w_0,w)+ \tsum_{k=1}^N[\tfrac{(L_f+z^T L_h)\alpha_k^2 D_X^2 }{2\Gamma_k} +\tfrac{\alpha_k\lambda_k^2(9 \bar M_h^2 + \|A\|^2)D_X^2}{2\tau_k\Gamma_k}]\\
& + \tsum_{k=1}^N \tfrac{\alpha_k}{\Gamma_k} [ \langle A(p_k-p_{k-1}), y- q_k\rangle - \lambda_k \langle A (p_{k-1}-p_{k-2}), y - q_{k-1}\rangle]\\
& + \tsum_{k=1}^N\tfrac{\alpha_k}{\Gamma_k}[\langle l_h(x_{k-1},p_k)-l_h(x_{k-2},p_{k-1}), z - r_k\rangle  \\
& \quad \quad  - \lambda_k \langle l_h(x_{k-2},p_{k-1})-l_h(x_{k-3},p_{k-2}), z - r_{k-1}\rangle ]\\
& + \tsum_{k=1}^N\tfrac{\alpha_k\tau_k}{2\Gamma_k}[\|y-q_{k-1}\|_2^2 -\|y-q_k\|_2^2 + \|z-r_{k-1}\|_2^2 -\|z-r_k\|_2^2],
\end{align*}
which, in view of \eqref{cond1}, then implies that
\begin{align*}
Q(w_N,w) &\leq  \Gamma_N \tsum_{k=1}^N[\tfrac{(L_f+z^T L_h)\alpha_k^2 D_X^2 }{2\Gamma_k} +\tfrac{\alpha_k\lambda_k^2(9 \bar M_h^2 + \|A\|^2)D_X^2}{2\tau_k\Gamma_k}]\\
&\quad +\alpha_N \langle A(p_N-p_{N-1}), y-q_N\rangle -\tfrac{\alpha_N\tau_N}{2}\|y-q_N\|_2^2 \\
&\quad + \alpha_N\langle l_h(x_{N-1},p_N)-l_h(x_{N-2},p_{N-1}),z-r_N\rangle -\tfrac{\alpha_N\tau_N}{2}\|z-r_N\|_2^2 \\
&\quad  + \tfrac{\alpha_1\tau_1\Gamma_N}{2}[\|y-q_{0}\|_2^2 + \|z-r_{0}\|_2^2 ]\\
& \leq   \Gamma_N \tsum_{k=1}^N[\tfrac{(L_f+z^T L_h)\alpha_k^2 D_X^2 }{2\Gamma_k} +\tfrac{\alpha_k\lambda_k^2(9 \bar M_h^2 + \|A\|^2)D_X^2}{2\tau_k\Gamma_k}]\\
&\quad +\tfrac{\alpha_N}{2\tau_N} \|A\|^2\|p_N-p_{N-1}\|_2^2 +\tfrac{9 \bar M_h^2 \alpha_N D_X^2}{2\tau_N}\\
& \quad  + \tfrac{\alpha_1\tau_1\Gamma_N}{2}[\|y-q_{0}\|_2^2+ \|z-r_{0}\|_2^2],
\end{align*}
where the last relation follows from Young's inequality and 
a result similar to \eqnok{eq:Coex_temp4}. The result in \eqnok{thm1_gap_result} then immediately follows from the above inequality.

Note that by the definition of $Q(w_k,w)$ in \eqnok{eq:def_gap_function},
and the facts that $g(x^*) = 0$ and $h(x^*) \le 0$,
we have $f(x_N) - f(x^*) \le Q(w_N, (x^*,0,0))$.
Using this observation and fixing $x = x^*, y=0, z = 0$ in \eqnok{thm1_gap_result},
we obtain \eqnok{thm1_obj_result}.
Now let us denote
\begin{align}
\hat y_N &:= (\|y^*\|_2+1) \tfrac{g(x_N)}{\|g(x_N)\|_2}, \label{eq:def_hat_y}\\
\hat z_N &:= (\|z^{*}\|_2+1)\tfrac{[h(x_N)]_+}{\|[h(x_N)]_+\|_2}, \label{eq:def_hat_z}\\
\hat w^*_N &:= (x^*, \hat y_N,\hat z_N). \label{eq:def_hat_w}
\end{align}
Note that by the optimality condition of \eqnok{minmax}, we have
\begin{align*}
 0\leq Q(w_N, w^*) &=   f(x_N) -f(x^*) +\langle  g(x_N), y^{*}\rangle +\langle h(x_N), z^{*}\rangle  \\
& \leq   f(x_N)-f(x^*) + \|g(x_N)\|_2 \cdot \|y^{*}\|_2   + \|[h(x_N)]_+\|_2  \cdot \|z^{*}\|_2.
\end{align*}
In addition, using the fact that
 $g(x^*) = 0$ and $\langle h(x^*), \hat z_N\rangle \le 0$, 
we have
\begin{align*}
Q(w_N,\hat w^*_N) 
&\ge   f(x_N) -f(x^*) +\langle  g(x_N), \hat y_N \rangle + \langle  h(x_N),  \hat z_N \rangle  \\
&=   f(x_N)-f(x^*) +  \|g(x_N)\|_2 (\|y^{*}\|_2+1)  +  \|[h(x_N)]_+\|_2 (\|z^*\|_2+1).
\end{align*}
Combining the previous two observations, we conclude that
\beq \label{bnd_infeasibility}
\|g(x_N)\|_2 + \|[h(x_N)]_+\|_2  \leq Q(w_N,\hat w^*_N).
\eeq
The previous conclusion, together with \eqnok{thm1_gap_result} and the facts that
\begin{align}
\|\hat y_N -q_0\|_2^2  &\le 2 [\|\hat y_N\|_2^2 +  \|q_0\|_2^2] = 2 [(\|y^*\|_2+1)^2 + \|q_0\|_2^2], \label{eq:bound_hat_y}\\
\|\hat z_N -r_0\|_2^2 &\le 2[\hat z_N\|_2^2 +  \|r_0\|_2^2] = 2 [(\|z^*\|_2+1)^2 + \|r_0\|_2^2], \label{eq:bound_hat_z}\\
\hat z_N^T L_h &\le \|\hat z_N^T\|_2 \|L_h\|_2 = (\|z^*\|_2+1) \bar L_h, \label{eq:bound_hat_inner}
\end{align}
then imply that
\begin{align*}
&\|g(x_N)\|_2 + \|[h(x_N)]_+\|_2 \\
&\le  \Gamma_N\tsum_{k=1}^N\left[\tfrac{(L_f+\hat z_N^T L_h)\alpha_k^2D_X^2}{2\Gamma_k} +\tfrac{\alpha_k\lambda_k^2(9 \bar M_h^2+\|A\|^2)D_X^2}{2\tau_k\Gamma_k}\right] \\
&\quad + \tfrac{\alpha_N(9\bar M_h^2+\|A\|^2) D_X^2}{2\tau_N} +\tfrac{\tau_1\Gamma_N}{2}\|\hat y_N -q_0\|_2^2 +\tfrac{\tau_1\Gamma_N}{2}\|\hat z_N -r_0\|_2^2 \\
&\le  \Gamma_N\tsum_{k=1}^N\left[\tfrac{[L_f+(\|z^*\|_2+1) \bar L_h]\alpha_k^2D_X^2}{2\Gamma_k} +\tfrac{\alpha_k\lambda_k^2(9 \bar M_h^2+\|A\|^2)D_X^2}{2\tau_k\Gamma_k}\right] \nn\\
&\quad +\tfrac{\alpha_N(9\bar M_h^2+\|A\|^2) D_X^2}{2\tau_N} + \tau_1\Gamma_N [(\|y^*\|_2+1)^2 + \|q_0\|_2^2 + (\|z^*\|_2+1)^2 + \|r_0\|_2^2].
\end{align*}
\end{proof}

\vgap

Below we provide a specific selection of the algorithmic parameters $\alpha_k$, $\lambda_k$ and $\tau_k$
and establish the associated rate of convergence for the CoexCG method.

\begin{corollary}\label{Cor_Coex_main}
If the number of iterations $N$ is fixed a priori, and
\begin{equation}\label{para_Coex_main}
\alpha_k = \tfrac{2}{k+1}, \lambda_k = \tfrac{k-1}{k},\ \tau_k = \tfrac{N^{3/2}}{k}D_X \sqrt{9\|M_h\|^2+\|A\|^2}, k = 1, \ldots, N,
\end{equation}
then we have
\begin{align}
Q(w_N,w) &\leq \tfrac{2(L_f+z^T L_h)D_X^2}{N+1} + \tfrac{D_X \sqrt{9\bar M_h^2+\|A\|^2}}{\sqrt{N}}  \left( \|y-q_0\|_2^2 + \|z-r_0\|_2^2 + 1 \right),\nn \\
& \quad \quad \ \forall w \in X \times \bbr^m \times \bbr^d_+,  \label{cor_gap_result}\\
f(x_N) - f(x^*) &\le \tfrac{2L_f D_X^2}{N+1} +  \tfrac{ D_X \sqrt{9\bar M_h^2+\|A\|^2}}{\sqrt{N}}  ( \|(q_0;r_0)\|_2^2 +1 ), \label{cor_obj_result} \\
\|g(x_N)\|_2 + \|[h(x_N)]_+\|_2 &\le \tfrac{2[L_f+(\|z^*\|_2+1) \bar L_h] D_X^2}{N+1} \nn \\
&\quad +  \tfrac{ 2D_X \sqrt{9\bar M_h^2+\|A\|^2}}{\sqrt{N}}  [2\|(y^*;z^*)\|_2^2 + \|(q_0;r_0)\|_2^2  +5 ]. \label{cor_feas_result}
\end{align}
\end{corollary}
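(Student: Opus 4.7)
The plan is to apply Theorem~\ref{The_Coex_main} with the stated parameter choice and then reduce each of the four groups of terms on the right-hand sides of \eqref{thm1_gap_result}--\eqref{thm1_feas_result} to a clean closed form. The work splits into three steps: (i) verify the stepsize conditions \eqref{cond1}, (ii) compute $\Gamma_k$ in closed form, and (iii) bound the resulting sums using elementary arithmetic. Steps (i)--(ii) are the standard calculation one does whenever $\alpha_k=2/(k+1)$; step (iii) is just careful bookkeeping.

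For (i) and (ii), with $\alpha_k = 2/(k+1)$ the recursion \eqref{Def_Gamma} gives $\Gamma_k = 2/(k(k+1))$ and $\alpha_1 = 1$. Then $\alpha_{k-1}/\Gamma_{k-1} = k-1$ and
\[
\tfrac{\lambda_k \alpha_k}{\Gamma_k} = \tfrac{k-1}{k}\cdot \tfrac{2}{k+1}\cdot \tfrac{k(k+1)}{2} = k-1,
\]
which verifies the middle equality of \eqref{cond1}. The specific form of $\tau_k$ is chosen precisely so that $\alpha_k \tau_k/\Gamma_k = N^{3/2} D_X \sqrt{9\bar M_h^2+\|A\|^2}$ is independent of $k$, so the last condition of \eqref{cond1} holds with equality.

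For (iii), I would substitute into \eqref{thm1_gap_result} and estimate each term. First, $\Gamma_N \tsum_{k=1}^N \alpha_k^2/\Gamma_k = \tfrac{2}{N(N+1)}\tsum_{k=1}^N \tfrac{2k}{k+1} \le \tfrac{4}{N+1}$, which produces the first term in \eqref{cor_gap_result} (applied to both $L_f$ and $z^T L_h$). Next, substituting the chosen $\tau_k$ gives
\[
\tfrac{\alpha_k \lambda_k^2}{\tau_k \Gamma_k} = \tfrac{(k-1)^2}{N^{3/2} D_X \sqrt{9\bar M_h^2+\|A\|^2}},
\]
so combined with the prefactor $(9\bar M_h^2+\|A\|^2) D_X^2/2$, $\Gamma_N = 2/(N(N+1))$, and the bound $\tsum_{k=1}^N (k-1)^2 \le N^3/3$, this sum contributes at most a constant multiple of $D_X\sqrt{9\bar M_h^2+\|A\|^2}/\sqrt{N}$ (using $\sqrt{N}/(N+1)\le 1/\sqrt{N}$). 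The isolated endpoint term $\alpha_N(9\bar M_h^2+\|A\|^2)D_X^2/(2\tau_N)$ is the same order via $\tau_N = \sqrt{N}\,D_X \sqrt{9\bar M_h^2+\|A\|^2}$, and likewise $\tau_1\Gamma_N/2 = \sqrt{N}\,D_X\sqrt{9\bar M_h^2+\|A\|^2}/(N+1) \le D_X\sqrt{9\bar M_h^2+\|A\|^2}/\sqrt{N}$. Collecting the pieces yields \eqref{cor_gap_result}.

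The bounds \eqref{cor_obj_result} and \eqref{cor_feas_result} then follow from \eqref{cor_gap_result} by exactly the same trick used in the proof of Theorem~\ref{The_Coex_main}: setting $(y,z)=(0,0)$ together with $f(x_N)-f(x^*) \le Q(w_N,(x^*,0,0))$ (since $g(x^*)=0$ and $h(x^*)\le 0$) gives \eqref{cor_obj_result}; and invoking \eqref{bnd_infeasibility} with the dual certificate $\hat w^*_N$ from \eqref{eq:def_hat_w} combined with the estimates \eqref{eq:bound_hat_y}--\eqref{eq:bound_hat_inner} inside the already-established bound for $Q(w_N,\hat w^*_N)$ gives \eqref{cor_feas_result}. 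The whole argument is purely computational; the only mild obstacle is constant-tracking, in particular ensuring that the telescoping factor $\sqrt{N}/(N+1)$ appearing in several terms is cleanly absorbed into $1/\sqrt{N}$ to match the stated coefficients.
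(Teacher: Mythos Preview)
Your proposal is correct and follows essentially the same approach as the paper: verify \eqref{cond1} via $\Gamma_k=2/[k(k+1)]$ and $\alpha_k/\Gamma_k=k$, bound $\tsum \alpha_k^2/\Gamma_k\le 2N$ and $\tsum \alpha_k\lambda_k^2/(\tau_k\Gamma_k)$ using $\tsum (k-1)^2\le N^3/3$, and then substitute. The only cosmetic difference is that the paper plugs the parameter choices directly into each of \eqref{thm1_gap_result}, \eqref{thm1_obj_result}, \eqref{thm1_feas_result} separately, whereas you derive \eqref{cor_obj_result} and \eqref{cor_feas_result} from \eqref{cor_gap_result} by re-specializing at $(y,z)=(0,0)$ and at $\hat w^*_N$; both routes are equivalent and yield the stated constants.
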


\begin{proof}
By \eqref{Def_Gamma} and  the definition of $\alpha_k$ in \eqnok{para_Coex_main},
we have 
$\Gamma_k = 2/[k(k+1)]$ and $\alpha_k/\Gamma_k = k$.
We can easily see from these identities and \eqnok{para_Coex_main} that 
the conditions in \eqref{cond1} hold. It is also easy to verify that
\begin{align*}
\tsum_{k=1}^N \tfrac{\alpha_k^2}{\Gamma_k} &= 2 \tsum_{k=1}^N\tfrac{k}{k+1} \le 2N,\\
\tsum_{k=1}^N \tfrac{\alpha_k \lambda_k^2}{\tau_k \Gamma_k} &= \tfrac{\tsum_{k=1}^N (k-1)^2}{2 N^{3/2} D_X \sqrt{9\|M_h\|^2+\|A\|^2}}  \le  \tfrac{N^{3/2}}{6  D_X \sqrt{9\|M_h\|^2+\|A\|^2}}.
\end{align*}
Using these relations in \eqnok{thm1_gap_result}, \eqnok{thm1_obj_result} and \eqnok{thm1_feas_result}, we conclude that
\begin{align*}
Q(w_N,w) &\leq \tfrac{2(L_f+z^T L_h) D_X^2}{N+1}+   \tfrac{\sqrt{N} D_X \sqrt{9 \bar M_h^2+\|A\|^2}}{6(N+1) } \\
& \quad + \tfrac{  D_X \sqrt{9\bar M_h^2+\|A\|^2}}{(N+1) \sqrt{N}} +\tfrac{\sqrt{N}D_X \sqrt{9\bar M_h^2+\|A\|^2}}{N+1} (\|y-q_0\|_2^2 + \|z-r_0\|_2^2)  \\
&= \tfrac{2(L_f+z^T L_h) D_X^2}{N+1} +  \left[ \tfrac{\sqrt{N} }{6(N+1) } + \tfrac{1}{(N+1) \sqrt{N}} + \tfrac{\sqrt{N}}{N+1}(\|y-q_0\|_2^2 + \|z-r_0\|_2^2)   \right]D_X \sqrt{9\bar M_h^2+\|A\|^2}\\
&\le  \tfrac{2(L_f+z^T L_h) D_X^2}{N+1} + \tfrac{D_X \sqrt{9\bar M_h^2+\|A\|^2}}{\sqrt{N}}  \left( \|y-q_0\|_2^2 + \|z-r_0\|_2^2 + 1 \right),\\
f(x_N) - f(x^*) 
&\le \tfrac{2L_f D_X^2}{N+1} +  \tfrac{ D_X \sqrt{9\bar M_h^2+\|A\|^2}}{\sqrt{N}}  ( \|q_0\|_2^2 + \|r_0\|_2^2 +1),
\end{align*}
and
\begin{align*}
&\|g(x_N)\|_2 + \|[h(x_N)]_+\|_2 \\
&\le   \tfrac{2[L_f+(\|z^*\|_2+1) \bar L_h] D_X^2}{N+1} +  \tfrac{\sqrt{N} D_X \sqrt{9 \bar M_h^2+\|A\|^2}}{6(N+1) } +\tfrac{ D_X \sqrt{9\bar M_h^2+\|A\|^2}}{(N+1) \sqrt{N}} \nn\\
&\quad + \tfrac{2\sqrt{N}D_X \sqrt{9\bar M_h^2+\|A\|^2}}{N+1} [(\|y^*\|_2+1)^2 + \|q_0\|_2^2 + (\|z^*\|_2+1)^2 + \|r_0\|_2^2] \nn\\
&\le \tfrac{2[L_f+(\|z^*\|_2+1) \bar L_h] D_X^2}{N+1} +  \tfrac{ D_X \sqrt{9\bar M_h^2+\|A\|^2}}{\sqrt{N}}  [1 +2(\|y^*\|_2+1)^2 +2 \|q_0\|_2^2 +2 (\|z^*\|_2+1)^2 +2 \|r_0\|_2^2 ] \\
&\le \tfrac{2[L_f+(\|z^*\|_2+1) \bar L_h] D_X^2}{N+1} +  \tfrac{ 2D_X \sqrt{9\bar M_h^2+\|A\|^2}}{\sqrt{N}}  [2(\|y^*\|_2^2+\|z^*\|_2^2) + \|q_0\|_2^2 + \|r_0\|_2^2 +5].
\end{align*}
\end{proof}

A few remarks about the results obtained in Theorem~\ref{The_Coex_main} and Corollary~\ref{Cor_Coex_main} are in place.
Firstly, in view of \eqnok{cor_gap_result}, the gap function $Q(w_N, w)$
converges to $0$ with the rate of convergence given by ${\cal O}(1/\sqrt{N})$. This bound
has been shown to be not improvable in  \cite{lan2013complexity} 
for general saddle point problems in terms of the number of calls
to linear optimization oracles (see also Chapter 7 of \cite{LanBook2020}),
even though such a lower complexity bound cannot be directly applied to our setting since we are dealing
with a specific saddle point problem with unbounded dual variables.
Secondly, in view of \eqnok{cor_obj_result} and \eqnok{cor_feas_result}, the number of iterations required
by the CoexCG method to find a $\epsilon$-solution of problem~\eqnok{general}, i.e., a point
$\bar x \in X$ s.t. $f(\bar x) - f(x^*) \le \epsilon$ and $\|g(\bar x)\|_2 + \|[h(\bar x)]_+\|_2 \le \epsilon$,
is bounded by ${\cal O} (1/\epsilon^2)$. 
Thirdly, it is interesting to observe that in both \eqnok{cor_obj_result} and \eqnok{cor_feas_result},
the Lipschitz constants $L_f$ and $\bar L_h$ do not impact too much the rate of
convergence of the CoexCG method, since both of them appear only in the non-dominant terms.
We will explore further this property of the CoexCG method in order to solve problems
with certain nonsmooth objective and constraint functions. 
Finally, it is worth noting that in the parameter setting \eqnok{para_Coex_main},
we need to fix the total number of iterations $N$ in advance. This is not desirable for 
the implementation of the CoexCG method, especially for the situation when
one has finished the scheduled $N$ iterations, but then realizes
that a more accurate solution is needed. In this case, one has to completely restart the CoexCG method
with a different parameter setting that depends on the modified iteration limit.
We will discuss how to address this issue in Section~\ref{sec_adaptive}.

\subsection{Structured nonsmooth functions}  \label{sec_nonsmooth_basic}
In this subsection, we still consider problem \eqnok{general}, but the objective function $f$ and constraint functions $h_i$
are not necessarily differentiable. More specifically, we assume that $f(\cdot)$ and $h_i(\cdot)$ are given in the following form:
\begin{equation}\label{Nonsmooth}
\begin{aligned}
& f(x) = \max_{q\in Q}\{\langle Bx,q\rangle - \hat f(q)\},\\
& h_i(x) = \max_{s\in S_i}\{\langle C_ix,s\rangle - \hat h_i(s)\}, i=1,\ldots, d,
\end{aligned}
\end{equation}
where $Q \subseteq \bbr^{m_0}$ and $S \subseteq \bbr^{m_i}$ are closed convex sets, and $\hat f$ and $\hat h_i$ are simple convex functions.
Many nonsmooth functions can be represented in this form (see \cite{Nest05-1}).
In this paper, we assume that $\hat f$ and $\hat h_i$ are possibly strongly convex w.r.t. the given norms in the respective spaces, i.e.. 
\begin{align}
\hat f (q_1) - \hat f(q_2) - \langle \hat f'(q_2), q_1 - q_2\rangle \ge \tfrac{\mu_0 }{2} \|q_1 - q_2\|^2, \forall q_1, q_2 \in Q\\
\hat h_i (s_1) - \hat h_i(s_2) - \langle \hat h'_i(s_2), s_1 - s_2\rangle \ge \tfrac{\mu_i }{2} \|s_1 - s_2\|^2, \forall s_1, s_2 \in S_i, i = 1, \ldots, d,
\end{align}
for some $\mu_i \ge 0$. 
If $\mu_0 > 0$ (resp., $\mu_i > 0$), then $f$ (resp., $h_i$) must be 
differentiable with Lipschitz continuous gradients. Therefore, our nonsmooth formulation in~\eqnok{Nonsmooth} allows
either the objective and/or some constraint functions to be smooth. 

Our goal in this subsection is to generalize the CoexCG method to solve these structured nonsmooth convex optimization problems.
In fact, we show that
 that the number of CoexCG iterations 
required to solve these problems is in the same order of magnitude as
if $f$ and $h_i$'s are smooth convex functions.

Since $f$ and $h_i$ are possibly not differentiable, we cannot directly apply the CoexCG algorithm
to solve problem~\eqnok{general}. However, as pointed out by Nesterov~\cite{Nest05-1},
these nonsmooth functions can be closely approximated by smooth convex ones.  
Let us first consider the objective function $f$.
Assume that $u: Q \rightarrow \bbr$ is a given strongly convex function with modulus $1$ w.r.t. a given norm $\|\cdot\|$ in $\bbr^{m_0}$, i.e.,
\[
u(q_1) \geq u(q_2) + \langle u'(q_2), q_1-q_2\rangle + \tfrac{1}{2} \|q_1-q_2\|^2, \forall q_1,q_2 \in Q.
\]
Let us denote $c_u := \argmin_{q \in Q} u(y)$, $U(q) := u(q) - u(c_u) -\langle \nabla u(c_u), q-c_u\rangle$ and
\begin{equation}\label{eq:def_DU}
D_U := [\max_{q \in Q} U(y)]^{1/2},
\end{equation}
and define
\beq \label{eq:def_smoothapp_f}
f_{\eta_0}(x) := \max_{q\in Q}\{\langle Bx,q\rangle - \hat f(q)-\eta_0 U(q)]\}
\eeq
for some $\eta_0 \ge 0$. 
Then, we can show that $f_{\eta_0}$ is differentiable and its gradients satisfy (see \cite{Nest05-1})
\beq \label{eq:smoothapp_f_Lip}
\| \nabla f_{\eta_0}(x_1) - \nabla f_{\eta_0}(x_2)\|_* \le  L_{f,\eta} \|x_1 - x_2\|, \ \forall x_1, x_2 \in X \ \
\mbox{with} \ \ L_{f,\eta} := \tfrac{\|B\|^2}{\mu_0 + \eta_0}.
\eeq
In addition, we have
\beq \label{eq:app_close_f}
 f_{\eta_0}(x) \leq f(x) \leq  f_{\eta_0}(x)  +\eta_0 D_U^2, \ \forall x \in X.
\eeq
In our algorithmic scheme, we will set $\eta_0 = 0$ whenever $\hat f$ is strongly convex, i.e., $\mu_0 > 0$.

Similarly, let us assume that $v_i: S_i \rightarrow \bbr$ are strongly convex with modulus $1$ w.r.t. a given norm $\|\cdot\|$ in $\bbr^{m_i}$, $i = 1, \ldots,d$.
Also let us denote $c_{v_i} := \argmin_{s \in S_i} v_i(s)$, $V_i(s) := v_i(s) - v_i(c_{v_i} ) -\langle \nabla v_i(c_{v_i} ), s-c_{v_i} \rangle$ and
\beq \label{eq:def_DV}
D_{V_i} := [\max_{s \in S_i} V_i(s)]^{1/2},
\eeq
and define 
\beq \label{eq:def_smoothapp_h}
h_{i, \eta_{i}}(x) = \max_{s\in S_i}\{\langle C_ix,s\rangle - \hat h_i(s)-\eta_{i} V_i(s)\}
\eeq
for some $\eta_i \ge 0$.
We can show that for all $i = 1, \ldots, d$,
\begin{align}
\| \nabla h_{i,\eta_i}(x_1) - \nabla h_{i, \eta_i}(x_2)\|_* \le  \tfrac{\| C_i\|^2}{\mu_i + \eta_i} \|x_1 - x_2\|, \ \forall x_1, x_2 \in X,\\
h_{i, \eta_i}(x) \leq h_i(x) \leq  h_{i, \eta_i}(x)  +\eta_i D_{V_i}^2, \ \forall x \in X. \label{eq:app_close_h}
\end{align}
In our algorithmic scheme, we will set $\eta_i = 0$ whenever $\hat h_i$ is strongly convex, i.e., $\mu_i > 0$.
For notational convenience, we denote
\beq \label{eq:def_app_h_lip_grad}
h_\eta(x) := (h_{1, \eta_1}(x); \ldots; h_{d, \eta_d}(x)), \
 L_{h,\eta} := (\tfrac{\| C_1\|^2}{\mu_{\hat h_1} + \eta_1}; \ldots; \tfrac{\| C_d\|^2}{\mu_{\hat h_d}+\eta_d} ) \ \ \mbox{and} \ \
 \bar L_{h, \eta} := \| L_{h,\eta}\|_2.
\eeq

Different from the objective function, we need to show that
the gradient of the $h_{i,\eta_i}$ is bounded. Note that the boundedness of the gradients for smooth constraint functions (with $\mu_i > 0$ and hence $\eta_i = 0$)
follows from the boundedness of $X$ (see Section~\ref{sec_smooth_basic}). For those nonsmooth
constraint functions $h_i$ (with $\mu_i = 0$), we need to 
assume that $S_i$'s are compact. 
For a given $x \in X$, let $s^*(x)$ be
the optimal solution of \eqnok{eq:def_smoothapp_h}. Then
\begin{align}
\|\nabla h_{i, \eta_{i}}(x)\|_* &= \|C_i^T \cdot s^*(x)\|_* \le \|C_i\| \|s^*(x)\| \nn \\
&\le \|C_i\| (\|c_{v_i}\| + \|s^*(x) - c_{v_i}\|) \nn \\
&\le  \|C_i\| (\|c_{v_i}\| + \sqrt{2}D_{V_i} ) =: M_{C_i, V_i}, i = 1, \ldots, d. \label{eq:def_Lips_MC}
\end{align}
For notational convenience, we also denote
\beq \label{eq:def_app_h_lip}
\bar M_{C,V} := \sqrt{\tsum_{i=1}^d M_{C_i, V_i}^2}.
\eeq
Observe that the Lipschitz constants  $M_{C_i, V_i}$ defined in \eqnok{eq:def_Lips_MC} 
do not depend on the smoothing parameters $\eta_i$, $i = 1, \ldots, d$.
This fact will be important for us to derive the complexity bound of the CoexCG method for solving convex optimization problems
with nonsmooth function constraints.

Instead of solving the original problem~\eqnok{general}, we suggest to apply the CoexCG method to
the smooth approximation problem
\begin{equation}\label{general2}
\begin{aligned}
\min\quad & f_{\eta_0}(x) \\
\text{ s.t. }\quad & g(x) = 0,\\
&  h_{i, \eta_i}(x) \leq 0, \forall i =1,\ldots, d,\\
&  x\in X.
\end{aligned}
\end{equation}
More specifically, we replace the linear approximation functions $l_h$ and $l_f$
used in \eqnok{eq:step_ex2} and \eqnok{eq:step_lo} by
$l_{h_{i, \eta_i}}$ and $l_{f_{\eta_o}}$, respectively.
However, we will establish the convergence of this method in terms of the solution of
the original problem in \eqnok{general} rather than the approximation problem in \eqnok{general2}. 
Our convergence analysis below exploits
the smoothness of $f_{\eta_0}$  (resp., $h_{i, \eta_i}$), the closeness between $f$ and $f_{\eta_0}$
(resp., $h_i$ and  $h_{i, \eta_i}$), and also importantly, the fact that
$h_{i, \eta_i}(x)$ underestimates $h_i(x)$ for all $x\in X$.

\vgap

\begin{theorem} \label{the_nonsmooth}
Consider the CoexCG method applied to the smooth approximation problem~\eqnok{general2}.
Assume that the number of iterations $N$ is fixed a priori, and that the parameters $\{\alpha_k\}, \{\tau_k\}$ and $\{\lambda_k\}$ are set to \eqref{para_Coex_main}
with $\bar M_h$ replaced by $\bar M_{C,V}$ in \eqnok{eq:def_app_h_lip}. 
Then we have
\begin{align}
f(x_N) - f(x^*) &\leq \tfrac{2L_{f,\eta} D_X^2}{N+1} + \tfrac{D_X \sqrt{9\bar M_{C,V}^2+\|A\|^2}}{\sqrt{N}}  \left( \|q_0\|_2^2 + \|r_0\|_2^2 + 1 \right) +\eta_0 D_U^2, \label{eq:nonsmooth_obj_result} \\ 
\|[h(x_N)]_+\| +\|Ax_N\| &\leq  \tfrac{2[L_{f,\eta}+(\|z^*\|_2+1) \bar L_{h,\eta}]) D_X^2}{N+1} + \tfrac{2D_X \sqrt{9\bar M_{C,V}^2+\|A\|^2}}{\sqrt{N}}  \left( 2 \|(y^*;z^*)\|_2^2 + \|(q_0;r_0)\|_2^2 + 5 \right) \nn \\
&\quad + \eta_0 D_U^2 + (\|z^*\|_2+1)(\tsum_{i=1}^d (\eta_i D_{V_i}^2)^2)^{1/2}, \label{eq:nonsmooth_feas_result}
\end{align}
where $(x^*, y^*, z^*)$ is a triple of optimal solutions for problem~\eqnok{minmax}, $L_{f,\eta}$ and $\bar L_{h, \eta}$ are defined in \eqnok{eq:smoothapp_f_Lip}
and \eqnok{eq:def_app_h_lip_grad}, respectively, and $D_X$, $D_U$ and $D_{V_i}$ are defined in \eqnok{eq:def_D_X}, \eqnok{eq:def_DU} and \eqnok{eq:def_DV},
respectively.
\end{theorem}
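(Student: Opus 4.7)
The plan is to reduce Theorem~\ref{the_nonsmooth} to Corollary~\ref{Cor_Coex_main} applied to the smooth surrogate~\eqnok{general2} and then translate the resulting bounds back to the original quantities $f,h$. Since $f_{\eta_0}$ and $h_{i,\eta_i}$ are differentiable with Lipschitz gradient constants $L_{f,\eta}$ and the entries of $L_{h,\eta}$ (see~\eqnok{eq:smoothapp_f_Lip}--\eqnok{eq:def_app_h_lip_grad}), and since the uniform gradient bound $M_{C_i,V_i}$ from~\eqnok{eq:def_Lips_MC} is crucially \emph{independent} of the smoothing parameters, Corollary~\ref{Cor_Coex_main} applies to~\eqnok{general2} under the substitutions $L_f \mapsto L_{f,\eta}$, $\bar L_h \mapsto \bar L_{h,\eta}$, and $\bar M_h \mapsto \bar M_{C,V}$. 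This delivers simultaneously a bound on $f_{\eta_0}(x_N)-f_{\eta_0}(\tilde x^*)$, where $\tilde x^*$ is any optimum of~\eqnok{general2}, and a bound on the smoothed gap function $Q_\eta(w_N,w)$ obtained from~\eqnok{eq:def_gap_function} by replacing $f$ with $f_{\eta_0}$ and $h$ with $h_\eta$.

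For the objective bound~\eqnok{eq:nonsmooth_obj_result}, I would first observe that $x^*$ remains feasible for~\eqnok{general2}: the affine constraint is unchanged, and by~\eqnok{eq:app_close_h} we have $h_{i,\eta_i}(x^*)\le h_i(x^*)\le 0$ for every $i$. Hence $f_{\eta_0}(\tilde x^*)\le f_{\eta_0}(x^*)\le f(x^*)$, while~\eqnok{eq:app_close_f} gives $f(x_N)\le f_{\eta_0}(x_N)+\eta_0 D_U^2$. Chaining these yields $f(x_N)-f(x^*) \le [f_{\eta_0}(x_N)-f_{\eta_0}(\tilde x^*)] + \eta_0 D_U^2$, and~\eqnok{cor_obj_result} applied to~\eqnok{general2} bounds the bracketed term, producing~\eqnok{eq:nonsmooth_obj_result} directly.

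The feasibility bound~\eqnok{eq:nonsmooth_feas_result} is more delicate. I would reuse the construction of $\hat y_N,\hat z_N,\hat w^*_N$ in~\eqnok{eq:def_hat_y}--\eqnok{eq:def_hat_w}, now with $(y^*,z^*)$ the dual optimum of the \emph{original} saddle point~\eqnok{minmax}. The derivation of~\eqnok{bnd_infeasibility} only uses $f,h$ at $x^*$ and $x_N$ (not the smoothed versions) together with $g(x^*)=0$, $h(x^*)\le 0$, so it carries over verbatim to give $\|g(x_N)\|_2+\|[h(x_N)]_+\|_2 \le Q(w_N,\hat w^*_N)$. I would then split $Q(w_N,\hat w^*_N)=Q_\eta(w_N,\hat w^*_N)+\Delta$ with $\Delta = [f(x_N)-f_{\eta_0}(x_N)]-[f(x^*)-f_{\eta_0}(x^*)] + \langle h(x_N)-h_\eta(x_N),\hat z_N\rangle - \langle h(x^*)-h_\eta(x^*),z_N\rangle$. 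Applying~\eqnok{eq:app_close_f},~\eqnok{eq:app_close_h}, Cauchy--Schwarz with $\|\hat z_N\|_2=\|z^*\|_2+1$, and the two sign facts $z_N\ge 0$ (by construction) and $h(x^*)-h_\eta(x^*)\ge 0$ componentwise, would bound $\Delta \le \eta_0 D_U^2 + (\|z^*\|_2+1)(\tsum_{i=1}^d (\eta_i D_{V_i}^2)^2)^{1/2}$. To bound $Q_\eta(w_N,\hat w^*_N)$, I would invoke the gap bound~\eqnok{cor_gap_result} (with the substituted constants) at $w=\hat w^*_N$, controlling $\hat z_N^T L_{h,\eta} \le (\|z^*\|_2+1)\bar L_{h,\eta}$ and $\|\hat y_N-q_0\|_2^2,\|\hat z_N-r_0\|_2^2$ exactly as in~\eqnok{eq:bound_hat_y}--\eqnok{eq:bound_hat_inner}. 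Adding the two contributions yields~\eqnok{eq:nonsmooth_feas_result}.

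The primary obstacle is the sign book-keeping inside $\Delta$: one must confirm that the term $-\langle h(x^*)-h_\eta(x^*),z_N\rangle$, which would otherwise introduce a dependence on the uncontrolled iterate $z_N$, is in fact nonpositive and therefore drops out, leaving only the clean residuals $\eta_0 D_U^2$ and $(\|z^*\|_2+1)(\tsum_i (\eta_i D_{V_i}^2)^2)^{1/2}$ advertised by the theorem. Once these signs are tracked, everything else reduces to straightforward algebra and the already established gap estimate.
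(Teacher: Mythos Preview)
Your proposal is correct and follows essentially the same route as the paper: bound the smoothed gap $Q_\eta$ via Corollary~\ref{Cor_Coex_main}, relate $Q$ to $Q_\eta$ through the approximation inequalities~\eqnok{eq:app_close_f}--\eqnok{eq:app_close_h} (your $\Delta$ computation is exactly the content of the paper's~\eqnok{eq:bnd_Q_eta_close}, including the sign observation that $-\langle h(x^*)-h_\eta(x^*),z_N\rangle\le 0$), and then specialize to $(x^*,0,0)$ for the objective and to $\hat w^*_N$ for feasibility. The only cosmetic difference is that for~\eqnok{eq:nonsmooth_obj_result} you detour through an optimum $\tilde x^*$ of~\eqnok{general2} and invoke~\eqnok{cor_obj_result}, whereas the paper evaluates $Q_\eta$ directly at $w=(x^*,0,0)$ and never needs to introduce $\tilde x^*$.
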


\begin{proof}
Denote $Q_\eta (w_N,w) := f_{\eta_0}(x_N) -f_{\eta_0}(x) +\langle g(x_N), y\rangle-\langle g(x), y_N\rangle + \langle h_{\eta}(x_N), z\rangle - \langle h_{\eta}(x), z_N\rangle $. 
In view of Corollary~\ref{Cor_Coex_main}, we have 
\begin{align}
Q_\eta(w_N,w)&\le \tfrac{(L_{f,\eta}+z^T L_{h,\eta}) D_X^2}{N+1} + \tfrac{D_X \sqrt{9\bar M_{C,V}^2+\|A\|^2}}{\sqrt{N}}  \left( \|y-q_0\|_2^2 + \|z-r_0\|_2^2 + 1 \right) \label{eq:bnd_Q_eta}
\end{align}
for any $w \in X \times \bbr^m \times \bbr^d_+$.
Using the relations in \eqnok{eq:app_close_f} and \eqnok{eq:app_close_h}, and the fact that $z, z_N \in \bbr^d_+$,
we can see that
\begin{align}
Q(w_N,w) &\leq Q_\eta(w_N,w) + \eta_0 D_U^2 + \tsum_{i=1}^d (\eta_i z_i D_{V_i}^2) \nn\\ 
&\le Q_\eta(w_N,w) + \eta_0 D_U^2 + \|z\|_2 (\tsum_{i=1}^d (\eta_i D_{V_i}^2)^2)^{1/2}, \
\forall w \in X \times \bbr^m \times \bbr^d_+. \label{eq:bnd_Q_eta_close}
\end{align}
By letting $x = x^*$, $ y = 0$ and $z = 0$, we have
\begin{align*}
f(x_N) - f(x^*)\leq Q(w_N,z) \leq  Q_\eta(z_N,z) + \eta_0 D_U^2,
\end{align*}
which, in view of \eqnok{eq:bnd_Q_eta}, then implies \eqnok{eq:nonsmooth_obj_result}.
Now let $\hat w^*_N$ be defined in \eqnok{eq:def_hat_w}. By \eqnok{bnd_infeasibility}, \eqnok{eq:bnd_Q_eta} and \eqnok{eq:bnd_Q_eta_close},
we have
\begin{align*}
&\|g(x_N)\|_2 + \|[h(x_N)]_+\|_2  \leq Q(w_N,\hat w^*_N) \\
&\le  Q_\eta(w_N,\hat w^*_N) + \eta_0 D_U^2 + \|\hat z_N\|_2 (\tsum_{i=1}^d (\eta_i D_{V_i}^2)^2)^{1/2}\\
&\le \tfrac{(L_{f,\eta}+\hat z_N^T L_{h,\eta}) D_X^2}{N+1} + \tfrac{D_X \sqrt{9\bar M_{C,V}^2+\|A\|^2}}{\sqrt{N}}  \left( \|\hat y_N-q_0\|_2^2 + \|\hat z_N-r_0\|_2^2 + 1 \right) \\
& \quad + \eta_0 D_U^2 + \|\hat z_N\|_2 (\tsum_{i=1}^d (\eta_i D_{V_i}^2)^2)^{1/2}\\
&\le  \tfrac{[L_{f,\eta}+(\|z^*\|_2+1) \bar L_{h,\eta}]) D_X^2}{N+1} + \tfrac{2D_X \sqrt{9\bar M_{C,V}^2+\|A\|^2}}{\sqrt{N}}  \left( 2 \|(y^*;z^*)\|_2^2 + \|(q_0;r_0)\|_2^2 + 5 \right) \\
&\quad + \eta_0 D_U^2 + (\|z^*\|_2+1)(\tsum_{i=1}^d (\eta_i D_{V_i}^2)^2)^{1/2},
\end{align*}
where the last inequality follows from the bounds in \eqnok{eq:bound_hat_y} and \eqnok{eq:bound_hat_z}, and the facts that $ \|\hat z_N\|_2 \le \|z^*\|_2+1$ and
$
\hat z_N^T L_{h,\eta} \le \|\hat z_N^T\|_2 \|L_{h,\eta}\|_2 = (\|z^*\|_2+1) \bar L_{h,\eta}.
$
\end{proof}

\vgap

We now specify the selection of the smoothing parameters $\eta_i$, $i =0, \ldots, d$.
We consider only the most challenging case when 
the objective and all constraint functions are nonsmooth and 
establish the rate of convergence of the aforementioned CoexCG method for nonsmooth convex optimization.

\begin{corollary} \label{cor_nonsmooth}
Suppose that the smoothing parameters in problem \eqnok{general2} are set to
\begin{equation}\label{eq:smooth_eta}
\eta_0 = \tfrac{\|B\| D_X}{D_U\sqrt{N}} \ \ \mbox{and} \ \ \eta_{i} =\tfrac{\|C_i\| D_X}{D_{V_i} \sqrt{N}}, i = 1,\ldots, d.
\end{equation}
Then under the same premise of Theorem~\ref{the_nonsmooth}, we have
\begin{align}
f(x_N) - f(x^*) &\leq\tfrac{3D_X D_U \|B\|}{\sqrt{N}} + \tfrac{D_X \sqrt{9\bar M_{C,V}^2+\|A\|^2}}{\sqrt{N}}  \left( \|q_0\|_2^2 + \|r_0\|_2^2 + 1 \right),\\ 
\|[h(x_N)]_+\| +\|Ax_N\| &\leq  \tfrac{3 D_X D_U \|B\|}{\sqrt{N}} + \tfrac{2(\|z^*\|_2+1) D_X \sqrt{\tsum_{i=1}^d (D_{V_i} \|C_i\|)^2}}{\sqrt{N}} \nn \\
&\quad + \tfrac{2D_X \sqrt{9\bar M_{C,V}^2+\|A\|^2}}{\sqrt{N}}  \left( 2 \|(y^*;z^*)\|_2^2 + \|(q_0;r_0)\|_2^2 + 5 \right).
\end{align}
\end{corollary}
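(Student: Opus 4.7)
The proof is essentially a direct substitution of the prescribed values of $\eta_0$ and $\eta_i$ into the bounds \eqnok{eq:nonsmooth_obj_result} and \eqnok{eq:nonsmooth_feas_result} of Theorem~\ref{the_nonsmooth}, followed by bookkeeping to show that the smoothing error and the smoothed Lipschitz term balance at the order $1/\sqrt{N}$. I would first record the identities that are induced by the nonsmooth case $\mu_0=\mu_i=0$ and the choice \eqnok{eq:smooth_eta}, namely
\[
L_{f,\eta}=\tfrac{\|B\|^2}{\eta_0}=\tfrac{\|B\|D_U\sqrt{N}}{D_X},\qquad \eta_0 D_U^2=\tfrac{\|B\|D_X D_U}{\sqrt{N}},
\]
and, componentwise,
\[
\tfrac{\|C_i\|^2}{\eta_i}=\tfrac{\|C_i\|D_{V_i}\sqrt{N}}{D_X},\qquad \eta_i D_{V_i}^2=\tfrac{\|C_i\|D_X D_{V_i}}{\sqrt{N}},
\]
so that $\bar L_{h,\eta}=\tfrac{\sqrt{N}}{D_X}\bigl(\sum_{i=1}^d(\|C_i\|D_{V_i})^2\bigr)^{1/2}$ and $\bigl(\sum_{i=1}^d(\eta_i D_{V_i}^2)^2\bigr)^{1/2}=\tfrac{D_X}{\sqrt{N}}\bigl(\sum_{i=1}^d(\|C_i\|D_{V_i})^2\bigr)^{1/2}$.

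For the objective bound, I would plug these into \eqnok{eq:nonsmooth_obj_result} and use $\tfrac{2\sqrt{N}}{N+1}\le \tfrac{2}{\sqrt{N}}$ to see that
\[
\tfrac{2L_{f,\eta}D_X^2}{N+1}+\eta_0D_U^2\ \le\ \tfrac{2\|B\|D_X D_U}{\sqrt{N}}+\tfrac{\|B\|D_X D_U}{\sqrt{N}}\ =\ \tfrac{3\|B\|D_X D_U}{\sqrt{N}},
\]
which, combined with the $1/\sqrt{N}$ error term already present in \eqnok{eq:nonsmooth_obj_result}, gives the first claimed inequality. For the infeasibility bound I would apply the same reduction to the $L_{f,\eta}$ contribution in \eqnok{eq:nonsmooth_feas_result}, and separately gather the two terms that are proportional to $\bigl(\sum_i(\|C_i\|D_{V_i})^2\bigr)^{1/2}$, namely the $(\|z^*\|_2+1)\bar L_{h,\eta}$ piece and the $(\|z^*\|_2+1)\bigl(\sum_i(\eta_i D_{V_i}^2)^2\bigr)^{1/2}$ piece; after again using $2\sqrt{N}/(N+1)\le 2/\sqrt{N}$ these contribute at most $\tfrac{2(\|z^*\|_2+1)D_X\sqrt{\sum_i(\|C_i\|D_{V_i})^2}}{\sqrt{N}}$ in total (the constant $2$ matching what is claimed in the corollary).

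The only step that is not pure arithmetic is the balancing argument behind choice \eqnok{eq:smooth_eta}: the smoothing parameters are chosen precisely so that $\tfrac{L_{f,\eta}D_X^2}{N}$ and $\eta_0 D_U^2$ have the same order, and similarly for each $i$ in the constraint part. Once this balancing is recognized, no further estimates are required beyond substitution and the elementary inequality $\sqrt{N}/(N+1)\le 1/\sqrt{N}$. Consequently the whole proof reduces to collecting terms; I do not anticipate any genuine analytical obstacle, and the main risk is simply to track the constants ($2$ versus $3$ in front of the $\|B\|D_X D_U/\sqrt{N}$ and $(\|z^*\|_2+1)D_X\bigl(\sum_i(\|C_i\|D_{V_i})^2\bigr)^{1/2}/\sqrt{N}$ terms) consistently across the two bounds.
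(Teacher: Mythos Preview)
Your proposal is correct and follows essentially the same approach as the paper: both proofs substitute the choices \eqnok{eq:smooth_eta} into the bounds \eqnok{eq:nonsmooth_obj_result}--\eqnok{eq:nonsmooth_feas_result}, compute the resulting identities for $L_{f,\eta}$, $\bar L_{h,\eta}$, $\eta_0 D_U^2$, and $(\sum_i(\eta_i D_{V_i}^2)^2)^{1/2}$, and then collapse everything using $\sqrt{N}/(N+1)\le 1/\sqrt{N}$ (the paper uses the slightly sharper $\sqrt{N}/(N+1)\le 1/\sqrt{N+1}$, but to the same effect). Your caveat about tracking the constants $2$ versus $3$ is apt and is indeed the only delicate point.
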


\begin{proof}
It follows from \eqnok{eq:smoothapp_f_Lip}, \eqnok{eq:def_app_h_lip_grad} and \eqnok{eq:smooth_eta} that
$L_{f,\eta} = \tfrac{\|B\|^2}{\eta_0} = \tfrac{D_U  \|B\| \sqrt{N}} {D_X}$ and that
\begin{align*}
\bar L_{h,\eta} &= \sqrt{\tsum_{i=1}^d \left(\tfrac{\|C_i\|^2}{\eta_i}\right)^2} 
=  \sqrt{\tsum_{i=1}^d \left(\tfrac{D_{V_i} \|C_i\| \sqrt{N }}{D_X}\right)^2}
= \tfrac{\sqrt{N} \sqrt{\tsum_{i=1}^d (D_{V_i} \|C_i\|)^2}}{D_X}.
\end{align*}
Also notice that $\eta_0 D_U^2 = \tfrac{D_X D_U \|B\|}{\sqrt{N}}$ and that
\begin{align*}
(\tsum_{i=1}^d (\eta_i D_{V_i}^2)^2)^{1/2}
&= \left(\tsum_{i=1}^d \tfrac{\|C_i\|^2 D_X^2 D_{V_i}^2 }{N} \right)^{1/2}
= \tfrac{D_X \sqrt{\tsum_{i=1}^d (D_{V_i} \|C_i\|)^2}}{\sqrt{N}}.
\end{align*}
Using these identities and the assumptions in \eqnok{eq:nonsmooth_obj_result}
and \eqnok{eq:nonsmooth_feas_result},
we have
\begin{align*}
f(x_N) - f(x^*) &\leq \tfrac{2 D_X D_U \|B\|}{\sqrt{N+1}} 
+ \tfrac{D_X \sqrt{9\bar M_{C,V}^2+\|A\|^2}}{\sqrt{N}}  \left( \|q_0\|_2^2 + \|r_0\|_2^2 + 1 \right) + \tfrac{D_X D_U\|B\|}{\sqrt{N}}\\ 
&\le \tfrac{3D_X D_U \|B\|}{\sqrt{N}} + \tfrac{D_X \sqrt{9\bar M_{C,V}^2+\|A\|^2}}{\sqrt{N}}  \left( \|q_0\|_2^2 + \|r_0\|_2^2 + 1 \right),\\
\|[h(x_N)]_+\| +\|Ax_N\| &\leq  \tfrac{2D_X D_U \|B\|}{\sqrt{N+1}} + \tfrac{(\|z^*\|_2+1) D_X \sqrt{\tsum_{i=1}^d (D_{V_i} \|C_i\|)^2}}{\sqrt{N+1}} \\
&\quad + \tfrac{2D_X \sqrt{9\bar M_{C,V}^2+\|A\|^2}}{\sqrt{N}}  \left( 2 \|(y^*;z^*)\|_2^2 + \|(q_0;r_0)\|_2^2 + 5 \right)  \\
&\quad + \tfrac{D_X D_U\|B\|}{\sqrt{N}}+ \tfrac{(\|z^*\|_2+1) D_X \sqrt{\tsum_{i=1}^d (D_{V_i} \|C_i\|)^2}}{\sqrt{N}} \\
&\le \tfrac{3 D_X D_U \|B\|}{\sqrt{N}} + \tfrac{2(\|z^*\|_2+1) D_X \sqrt{\tsum_{i=1}^d (D_{V_i} \|C_i\|)^2}}{\sqrt{N}} \\
&\quad + \tfrac{2D_X \sqrt{9\bar M_{C,V}^2+\|A\|^2}}{\sqrt{N}}  \left( 2 \|(y^*;z^*)\|_2^2 + \|(q_0;r_0)\|_2^2 + 5 \right).
\end{align*}
\end{proof}

We add a few remarks about the results obtained in Theorem~\ref{the_nonsmooth} and Corollary~\ref{cor_nonsmooth}.
Firstly, in view of Corollary~\ref{cor_nonsmooth}, even if $f$ and $h_i$ are nonsmooth functions,
the number of CoexCG iterations required to find an $\epsilon$-solution of problem~\eqnok{general}
is still bounded by ${\cal O}(1/\epsilon^2)$. Therefore, by utilizing the structural information of $f$ and $h_i$,
the CoexCG can solve this type of nonsmooth problem efficiently as if they are smooth functions.
Secondly, if either the objective function or some constraint functions are smooth,
we can set the corresponding smoothing parameter to be zero and obtain slightly 
improved complexity bounds than those in Corollary~\ref{cor_nonsmooth}.
Thirdly, similar to the CoexCG method applied for solving problem~\eqnok{general} with
smooth objective and constraint functions, we need to fix the number of iterations $N$
in advance when specifying the algorithmic parameters and smoothing parameters.
We will address this issue in the next section.

\section{Constraint-extrapolated and dual-regularized conditional gradient method} \label{sec_adaptive}
One critical shortcoming associated with the basic version of the CoexCG method is that
we need to fix the number of iterations $N$ a priori. Our goal
in this section is to develop a variant of CoexCG which does not have this requirement.
We consider the case when $f$ and $h_i$ are
 smooth and structured nonsmooth functions, respectively, in Subsections~\ref{sec_ada_smooth} 
 and \ref{sec_ada_nonsmooth}.

\subsection{Smooth functions} \label{sec_ada_smooth}
In order to remove the assumption of fixing $N$ a priori, 
we suggest to modify
the dual projection steps \eqnok{eq:step_dual1} and \eqnok{eq:step_dual2}
in the CoexCG method. More specifically,
we add an additional regularization term with diminishing
weights into these steps. 
This variant of CoexCG is formally described in 
Algorithm~\ref{algAdaCoexCG}.

\begin{algorithm}[H]
\caption{{\bf Co}nstraint-{\bf ex}trapolated and {\bf Du}al-{\bf r}egularized Conditional Gradient (CoexDurCG)}\label{algAdaCoexCG}
\begin{algorithmic} 
\State The algorithm is the same as CoexCG except that \eqnok{eq:step_dual1} and \eqnok{eq:step_dual2}
are replaced by
\begin{align}
q_k &= \argmin_{y \in \bbr^m} \{ \langle -\tilde g_k, y\rangle + \tfrac{\tau_k}{2}\|y-q_{k-1}\|_2^2 + \tfrac{\gamma_k}{2}\|y-q_{0}\|_2^2\}, \label{eq:step_dual1_ada}\\
r_k &= \argmin_{ z \in \bbr^d_+} \{ \langle -\tilde h_k, z\rangle + \tfrac{\tau_k}{2}\|z-r_{k-1}\|_2^2 +  \tfrac{\gamma_k}{2}\| z-r_{0}\|_2^2 \}, \label{eq:step_dual2_ada}
\end{align}
for some $\gamma_k \ge 0$.
\end{algorithmic}
\end{algorithm}

Clearly, we can write $q_k$ and $r_k$ in \eqnok{eq:step_dual1_ada} and \eqnok{eq:step_dual2_ada}
equivalently as
\begin{align*}
q_k = \tfrac{1}{\tau_k + \gamma_k} (\tau_k q_{k-1} + \gamma_k q_0 + \tilde g_k) \ \ \mbox{and} \ \
r_k = \max \left\{ \tfrac{1}{\tau_k + \gamma_k} (\tau_k r_{k-1} + \gamma_k r_0 + \tilde h_k), 0\right\}.
\end{align*}
Similar to the CoexCG method, it is also possible to generalize CoexDurCG for solving problems with
conic inequality constraints.
The following result, whose proof can be found in Lemma 3.5 of \cite{LanBook2020}, characterizes
the optimality conditions for \eqnok{eq:step_dual1_ada} and \eqnok{eq:step_dual2_ada}.

\begin{lemma}
Let $q_{k}$ and $r_k$ be defined in \eqnok{eq:step_dual1_ada} and \eqnok{eq:step_dual2_ada},
respectively. Then,
\begin{align}
& \langle -\tilde g_k, q_{k} - y \rangle + \tfrac{\tau_k}{2} \|q_{k} - q_{k-1}\|_2^2 + \tfrac{\gamma_k}{2} \|q_k - q_0\|_2^2 \nn \\
&\quad \le \tfrac{\tau_k}{2} \|y - q_{k-1}\|_2^2 - \tfrac{\tau_k+\gamma_k}{2} \|y - q_{k}\|_2^2 + \tfrac{\gamma_k}{2} \|y - q_0\|_2^2, \  \forall y \in \bbr^m, \label{opt_cond_dual1_ada}\\
&  \langle -\tilde h_k, r_{k} - z \rangle + \tfrac{\tau_k}{2} \|r_{k} - r_{k-1}\|_2^2 + \tfrac{\gamma_k}{2} \|r_k - r_0\|_2^2 \nn \\
&\quad \le \tfrac{\tau_k}{2} \|z - r_{k-1}\|_2^2 - \tfrac{\tau_k+\gamma_k}{2} \|z - r_{k}\|_2^2 + \tfrac{\gamma_k}{2} \|y - r_0\|_2^2, \ \forall z \in \bbr^d_+.  \label{opt_cond_dual2_ada}
\end{align}
\end{lemma}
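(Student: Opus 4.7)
The plan is to derive both inequalities directly from the first-order optimality conditions of the minimization problems in \eqnok{eq:step_dual1_ada} and \eqnok{eq:step_dual2_ada}, combined with the elementary three-point identity
\[
\langle a - b,\, a - c\rangle \;=\; \tfrac{1}{2}\bigl(\|a-b\|_2^2 + \|a-c\|_2^2 - \|b-c\|_2^2\bigr),
\]
applied separately to the two quadratic proximal terms. This is essentially the same template as the three-point lemma invoked in Lemma~\ref{lemma_projection} (cf.\ Lemma~3.5 of \cite{LanBook2020}), and the only novelty here is that the prox step has two prox centers, $q_{k-1}$ (resp.\ $r_{k-1}$) with weight $\tau_k$ and $q_0$ (resp.\ $r_0$) with weight $\gamma_k$. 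Since the objective is $(\tau_k+\gamma_k)$-strongly convex, each inequality will come out tight up to the $-\tfrac{\tau_k+\gamma_k}{2}\|y-q_k\|_2^2$ (resp.\ $-\tfrac{\tau_k+\gamma_k}{2}\|z-r_k\|_2^2$) term that the target statement displays.

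For the first inequality, since the minimization in \eqnok{eq:step_dual1_ada} is unconstrained over $\bbr^m$, the optimality condition reads $-\tilde g_k + \tau_k(q_k - q_{k-1}) + \gamma_k(q_k - q_0) = 0$. I would substitute this identity into $\langle -\tilde g_k,\, q_k - y\rangle$ to obtain
\[
\langle -\tilde g_k,\, q_k - y\rangle \;=\; -\tau_k\langle q_k - q_{k-1},\, q_k - y\rangle \;-\; \gamma_k\langle q_k - q_0,\, q_k - y\rangle,
\]
then apply the three-point identity to each inner product on the right, and collect terms. The terms $\tfrac{\tau_k}{2}\|q_k-q_{k-1}\|_2^2$ and $\tfrac{\gamma_k}{2}\|q_k-q_0\|_2^2$ on the left absorb the corresponding squared-norms generated by the identity, and the two $\|q_k-y\|_2^2$ pieces combine into $\tfrac{\tau_k+\gamma_k}{2}\|y-q_k\|_2^2$, which is exactly the form \eqnok{opt_cond_dual1_ada}.

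For the second inequality I would follow the same route, but with one modification to handle the constraint $z\in\bbr^d_+$: the first-order condition becomes the variational inequality
\[
\langle -\tilde h_k + \tau_k(r_k - r_{k-1}) + \gamma_k(r_k - r_0),\, z - r_k\rangle \;\geq\; 0, \quad \forall z\in\bbr^d_+,
\]
which rearranges to an upper bound on $\langle -\tilde h_k,\, r_k-z\rangle$ in terms of the same two inner products as before. Applying the three-point identity and collecting terms in the same fashion yields \eqnok{opt_cond_dual2_ada}. The argument is entirely mechanical; the only point to be careful about is the direction of the inequality when passing from an identity (unconstrained case) to a variational inequality (cone-constrained case), but since both steps only use that $r_k$ is a minimizer, no obstacle arises.
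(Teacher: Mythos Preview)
Your proposal is correct and follows exactly the standard three-point lemma argument that the paper defers to (Lemma~3.5 of \cite{LanBook2020}): write the first-order optimality condition (an equality for the unconstrained $q_k$ update, a variational inequality for the cone-constrained $r_k$ update), then expand each inner product $\langle q_k - c,\, q_k - y\rangle$ via the polarization identity and collect the two $\|y-q_k\|_2^2$ pieces into the $\tfrac{\tau_k+\gamma_k}{2}$ term. There is nothing to add; the paper itself gives no independent proof but simply cites this reference.
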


We now establish an important recursion about the CoexDurCG method,
which can be viewed as a counterpart of Proposition~\ref{Prop_CoexCG} for the CoexCG method.

\begin{proposition}\label{Prop_CoexDurCG}
For any $k>1$, we have
\begin{align*}
Q(w_k,w) &\leq  (1-\alpha_k) Q(w_{k-1},w)+ \tfrac{(L_f+z^T L_h)\alpha_k^2 D_X^2}{2} +\tfrac{\alpha_k\lambda_k^2 (9 \bar M_h^2+\|A\|^2) D_X^2}{2 \tau_k}\\
&\quad + \alpha_k [ \langle A(p_k-p_{k-1}), y- q_k\rangle - \lambda_k\langle A (p_{k-1}-p_{k-2}), y -q_{k-1}\rangle]\\
& \quad + \alpha_k[\langle l_h(x_{k-1},p_k)-l_h(x_{k-2},p_{k-1}), z- r_k\rangle - \lambda_k \langle l_h(x_{k-2},p_{k-1})-l_h(x_{k-3},p_{k-2}), z- r_{k-1}\rangle] \\
&\quad  + \tfrac{\alpha_k\tau_k}{2}( \|y-q_{k-1}\|_2^2+\|z-r_{k-1}\|_2^2) - \tfrac{\alpha_k(\tau_k+\gamma_k)}{2} (\|y-q_k\|_2^2 +\|z-r_k\|_2^2)\\
&\quad +\tfrac{\alpha_k\gamma_k}{2}[\|y-q_{0}\|_2^2 + \|z-r_{0}\|_2^2 ], \ \forall w \in X \times \bbr^m \times \bbr^d_+,
\end{align*}
where $D_X$ is defined in \eqnok{eq:def_D_X}.
\end{proposition}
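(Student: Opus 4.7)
The plan is to mirror the proof of Proposition~\ref{Prop_CoexCG} essentially verbatim, substituting the new three-point inequalities \eqref{opt_cond_dual1_ada}-\eqref{opt_cond_dual2_ada} for their CoexCG analogues \eqref{opt_cond_dual1}-\eqref{opt_cond_dual2}, and simply tracking the extra regularization contributions. The only structural difference between CoexDurCG and CoexCG lies in the dual update, so the primal-side analysis is unaffected.

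First I would reproduce the primal analysis of Proposition~\ref{Prop_CoexCG} word for word. That is, I would use the smoothness of $f$ and $h_i$ together with $x_k = (1-\alpha_k)x_{k-1} + \alpha_k p_k$ to bound $f(x_k)$ and $h_i(x_k)$ by their linearizations at $x_{k-1}$ evaluated at the convex combination, plus the quadratic error $\tfrac{(L_f+z^T L_h)\alpha_k^2}{2}\|p_k - x_{k-1}\|^2$. Substituting into the definition of $Q(w_k,w)$ in \eqref{eq:def_gap_function}, using the optimality of $p_k$ in \eqref{eq:step_lo} and the convexity of $f$ and $h_i$, I would arrive at the intermediate inequality \eqref{eq:recursion_on_gap_function}, which is identical in the CoexDurCG setting since the rule for $p_k$ has not changed. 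Bounding $\|p_k - x_{k-1}\|^2 \le D_X^2$ yields the $\tfrac{(L_f+z^T L_h)\alpha_k^2 D_X^2}{2}$ term.

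Next I would bring in the dual side. Multiplying \eqref{opt_cond_dual1_ada} and \eqref{opt_cond_dual2_ada} by $\alpha_k$ and adding them to the intermediate inequality, the only change compared with the CoexCG derivation is that the dual telescoping block becomes
\begin{align*}
&\tfrac{\alpha_k\tau_k}{2}(\|y-q_{k-1}\|_2^2 + \|z-r_{k-1}\|_2^2) - \tfrac{\alpha_k(\tau_k+\gamma_k)}{2}(\|y-q_k\|_2^2 + \|z-r_k\|_2^2) \\
&\quad + \tfrac{\alpha_k\gamma_k}{2}(\|y-q_0\|_2^2 + \|z-r_0\|_2^2) - \tfrac{\alpha_k\tau_k}{2}(\|q_k-q_{k-1}\|_2^2 + \|r_k-r_{k-1}\|_2^2),
\end{align*}
where the negative squared-distance terms $\|q_k-q_{k-1}\|_2^2$ and $\|r_k-r_{k-1}\|_2^2$ come from the same side of \eqref{opt_cond_dual1_ada}-\eqref{opt_cond_dual2_ada} as in \eqref{opt_cond_dual1}-\eqref{opt_cond_dual2} (the extra $\tfrac{\gamma_k}{2}\|q_k-q_0\|_2^2$ term on the left-hand side is simply dropped as it is nonnegative). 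This is the direct analogue of \eqref{eq:Coex_temp1}.

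Finally I would apply Young's inequality exactly as in \eqref{eq:Coex_temp2}-\eqref{eq:Coex_temp4} to use the negative quadratics $-\tfrac{\alpha_k\tau_k}{2}\|q_k-q_{k-1}\|_2^2$ and $-\tfrac{\alpha_k\tau_k}{2}\|r_k-r_{k-1}\|_2^2$ to absorb the cross-terms produced by the extrapolation definitions \eqref{eq:step_ex1}-\eqref{eq:step_ex2}. This produces the telescoping extrapolation terms in $\langle A(p_k-p_{k-1}),y-q_k\rangle$, $\langle l_h(x_{k-1},p_k)-l_h(x_{k-2},p_{k-1}),z-r_k\rangle$, and the residual bound $\tfrac{\alpha_k\lambda_k^2(9\bar M_h^2 + \|A\|^2)D_X^2}{2\tau_k}$. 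Collecting all pieces yields the stated inequality. The main obstacle, if any, is purely bookkeeping: one must ensure that the additional $\tfrac{\alpha_k\gamma_k}{2}(\|y-q_0\|_2^2+\|z-r_0\|_2^2)$ term remains on the right-hand side without being absorbed, and that the coefficient on the $\|y-q_k\|_2^2$, $\|z-r_k\|_2^2$ terms strengthens from $\tau_k$ to $\tau_k + \gamma_k$. No new analytical idea is required beyond what is already used in Proposition~\ref{Prop_CoexCG}.
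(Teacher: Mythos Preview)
Your proposal is correct and follows essentially the same approach as the paper: derive the analogue of \eqref{eq:Coex_temp1} by combining \eqref{eq:recursion_on_gap_function} with the regularized three-point inequalities \eqref{opt_cond_dual1_ada}--\eqref{opt_cond_dual2_ada} (dropping the nonnegative $\tfrac{\gamma_k}{2}\|q_k-q_0\|_2^2$ and $\tfrac{\gamma_k}{2}\|r_k-r_0\|_2^2$ terms), and then reuse the bounds \eqref{eq:Coex_temp2}--\eqref{eq:Coex_temp4} verbatim. The paper's proof is terser but structurally identical.
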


\begin{proof}
Multiplying both sides of \eqnok{opt_cond_dual1_ada} and \eqnok{opt_cond_dual2_ada} by $\alpha_k$ and
summing them up with the inequality in \eqnok{eq:recursion_on_gap_function}, we
have 
\begin{align}
Q(w_k,w) & \leq  (1-\alpha_k) Q(w_{k-1},w)+ \tfrac{(L_f+z^TL_h)\alpha_k^2 D_X^2}{2} \nn \\
&\quad +\alpha_k \langle g(p_k)- \tilde g_k), y- q_k\rangle +\alpha_k \langle l_h(x_{k-1},p_k)- \tilde h_k, z - r_k\rangle\nn \\
&\quad + \tfrac{\alpha_k\tau_k}{2}[\|y-q_{k-1}\|_2^2-\|q_k-q_{k-1}\|_2^2] - \tfrac{\alpha_k(\tau_k+\gamma_k)}{2} \|y-q_k\|_2^2 \nn \\
 &\quad + \tfrac{\alpha_k\tau_k}{2}[\|z-r_{k-1}\|_2^2 -\|r_k-r_{k-1}\|_2^2] - \tfrac{\alpha_k(\tau_k+\gamma_k)}{2} \|z-r_k\|_2^2 \nn\\
 &\quad  +\tfrac{\alpha_k\gamma_k}{2}[\|y-q_{0}\|^2 -\|q_k-q_{0}\|^2] +\tfrac{\alpha_k\gamma_k}{2}[\|z-r_{0}\|^2 -\|z_k-r_{0}\|^2] \nn\\
&  \leq  (1-\alpha_k) Q(w_{k-1},w)+ \tfrac{(L_f+z^TL_h)\alpha_k^2 D_X^2}{2} \nn \\
&\quad +\alpha_k \langle g(p_k)- \tilde g_k), y- q_k\rangle +\alpha_k \langle l_h(x_{k-1},p_k)- \tilde h_k, z - r_k\rangle\nn \\
&\quad + \tfrac{\alpha_k\tau_k}{2}[\|y-q_{k-1}\|_2^2-\|q_k-q_{k-1}\|_2^2] - \tfrac{\alpha_k(\tau_k+\gamma_k)}{2} \|y-q_k\|_2^2 \nn \\
 &\quad + \tfrac{\alpha_k\tau_k}{2}[\|z-r_{k-1}\|_2^2 -\|r_k-r_{k-1}\|_2^2] - \tfrac{\alpha_k(\tau_k+\gamma_k)}{2} \|z-r_k\|_2^2 \nn\\
 &\quad  +\tfrac{\alpha_k\gamma_k}{2}[\|y-q_{0}\|_2^2 + \|z-r_{0}\|_2^2 ], \ \forall w \in X \times \bbr^m \times \bbr^d_+. \label{eq:Coex_temp1_ada}
\end{align}
The result then follows by plugging relations \eqnok{eq:Coex_temp2} and \eqnok{eq:Coex_temp3} into \eqnok{eq:Coex_temp1_ada}.
\end{proof}

\vgap

We are now ready to establish the main convergence properties of the
CoexDurCG method.

\begin{theorem} \label{the_CoexDurCG}
Let $\Gamma_k$ be defined in \eqref{Def_Gamma} and assume that
the algorithmic parameters $\alpha_k, \tau_k$ and $\lambda_k$ in the CoexDurCG method satisfy
\begin{equation}\label{cond2}
\alpha_1 = 1, \ \tfrac{\lambda_k\alpha_k}{\Gamma_k} = \tfrac{\alpha_{k-1}}{\Gamma_{k-1}} \text{ and } \tfrac{\alpha_k\tau_k}{\Gamma_k} \leq \tfrac{\alpha_{k-1}(\tau_{k-1}+\gamma_{k-1})}{\Gamma_{k-1}}
\ \forall k \ge 2.
\end{equation}
Then we have
\begin{equation}\label{thm1_gap_result_ada}
\begin{aligned}
Q(w_N,w) &\leq  \Gamma_N\tsum_{k=1}^N\left[\tfrac{(L_f+z^T L_h)\alpha_k^2D_X^2}{2\Gamma_k} +\tfrac{\alpha_k\lambda_k^2(9 \bar M_h^2+\|A\|^2)D_X^2}{2\tau_k\Gamma_k}\right] 
+\tfrac{\alpha_N(9\bar M_h^2+\|A\|^2) D_X^2}{2(\tau_N+\gamma_N)} \\
&\quad + \Gamma_N \left(\tfrac{\tau_1}{2}+\tsum_{k=1}^N \tfrac{\alpha_k\gamma_k}{2\Gamma_k}\right) (\|y-q_0\|_2^2 +\|z-r_0\|_2^2),\ \forall w \in X \times \bbr^m \times \bbr^d_+,
\end{aligned}
\end{equation}
where $D_X$ is defined in \eqnok{eq:def_D_X}. As a consequence, we have
\begin{align}
f(x_N) - f(x^*) &\le  \Gamma_N\tsum_{k=1}^N\left[\tfrac{L_f \alpha_k^2D_X^2}{2\Gamma_k} +\tfrac{\alpha_k\lambda_k^2(9 \bar M_h^2+\|A\|^2)D_X^2}{2\tau_k\Gamma_k}\right]
+ \tfrac{\alpha_N(9\bar M_h^2+\|A\|^2) D_X^2}{2(\tau_N+\gamma_N)}  \nn \\
&\quad +\Gamma_N \left(\tfrac{\tau_1}{2}+\tsum_{k=1}^N \tfrac{\alpha_k\gamma_k}{2\Gamma_k}\right) (\|q_0\|_2^2 + \|r_0\|_2^2), \label{thm1_obj_result_ada}
\end{align}
and
\begin{align}
\|g(x_N)\|_2 + \|[h(x_N)]_+\|_2 &\le   \Gamma_N\tsum_{k=1}^N\left[\tfrac{[L_f+(\|z^*\|_2+1) \bar L_h]\alpha_k^2D_X^2}
{2\Gamma_k} +\tfrac{\alpha_k\lambda_k^2(9 \bar M_h^2+\|A\|^2)D_X^2}{2\tau_k\Gamma_k}\right]
+\tfrac{\alpha_N(9\bar M_h^2+\|A\|^2) D_X^2}{2(\tau_N+\gamma_N)} \nn\\
&\quad +\Gamma_N \left(\tfrac{\tau_1}{2}+\tsum_{k=1}^N \tfrac{\alpha_k\gamma_k}{2\Gamma_k}\right) [(\|y^*\|_2+1)^2 + \|q_0\|_2^2 + (\|z^*\|_2+1)^2 + \|r_0\|_2^2],
\label{thm1_feas_result_ada}
\end{align}
where $(x^*, y^*, z^*)$ denotes a triple of optimal solutions for problem~\eqnok{minmax}.
\end{theorem}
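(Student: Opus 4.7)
The plan is to mirror the proof of Theorem~\ref{The_Coex_main}, adapting it to account for the additional dual regularization terms. Starting from the recursion in Proposition~\ref{Prop_CoexDurCG}, I would divide both sides by $\Gamma_k$ and apply Lemma~\ref{Lemma 2} to take a telescoping sum from $k=1$ to $N$. The initial condition $\alpha_1 = 1$ eliminates the $Q(w_0,w)$ term, while the identity $\tfrac{\lambda_k\alpha_k}{\Gamma_k} = \tfrac{\alpha_{k-1}}{\Gamma_{k-1}}$ causes the extrapolation cross-products $\langle A(p_k-p_{k-1}), y-q_k\rangle$ and $\langle l_h(x_{k-1},p_k)-l_h(x_{k-2},p_{k-1}), z-r_k\rangle$ to telescope, leaving only the terms at $k=N$.

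The key new ingredient relative to Theorem~\ref{The_Coex_main} is how the squared-distance terms telescope under the modified stepsize condition $\tfrac{\alpha_k\tau_k}{\Gamma_k} \le \tfrac{\alpha_{k-1}(\tau_{k-1}+\gamma_{k-1})}{\Gamma_{k-1}}$. The point is that the negative term $-\tfrac{\alpha_{k-1}(\tau_{k-1}+\gamma_{k-1})}{2\Gamma_{k-1}}(\|y-q_{k-1}\|_2^2 + \|z-r_{k-1}\|_2^2)$ generated at iteration $k-1$ now dominates the positive term $\tfrac{\alpha_k\tau_k}{2\Gamma_k}(\|y-q_{k-1}\|_2^2 + \|z-r_{k-1}\|_2^2)$ at iteration $k$, so these pairs cancel in the sum. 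Only the final negative contribution $-\tfrac{\alpha_N(\tau_N+\gamma_N)}{2\Gamma_N}(\|y-q_N\|_2^2 + \|z-r_N\|_2^2)$ and the initial positive contribution $\tfrac{\alpha_1\tau_1}{2\Gamma_1}(\|y-q_0\|_2^2 + \|z-r_0\|_2^2) = \tfrac{\tau_1}{2}(\|y-q_0\|_2^2+\|z-r_0\|_2^2)$ survive. The new regularization terms $\tfrac{\alpha_k\gamma_k}{2\Gamma_k}(\|y-q_0\|_2^2 + \|z-r_0\|_2^2)$ simply accumulate, producing the $\tsum_{k=1}^N \tfrac{\alpha_k\gamma_k}{2\Gamma_k}$ factor in \eqref{thm1_gap_result_ada}. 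To handle the leftover cross products at $k=N$, I would apply Young's inequality in the exact manner of \eqref{eq:Coex_temp2} and \eqref{eq:Coex_temp4}, but using $\tau_N+\gamma_N$ in the denominator (since that is now the coefficient of the available negative squared term), yielding the improved last-iterate term $\tfrac{\alpha_N(9\bar M_h^2+\|A\|^2)D_X^2}{2(\tau_N+\gamma_N)}$.

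Once \eqref{thm1_gap_result_ada} is established, the two consequences follow by specializing $w$, exactly as in Theorem~\ref{The_Coex_main}. For \eqref{thm1_obj_result_ada}, I would choose $w = (x^*, 0, 0)$ and invoke $f(x_N) - f(x^*) \le Q(w_N, (x^*,0,0))$, which holds because $g(x^*) = 0$ and $h(x^*) \le 0$. For \eqref{thm1_feas_result_ada}, I would define $\hat y_N$, $\hat z_N$, $\hat w_N^*$ as in \eqref{eq:def_hat_y}--\eqref{eq:def_hat_w}, use the inequality $\|g(x_N)\|_2 + \|[h(x_N)]_+\|_2 \le Q(w_N, \hat w_N^*)$ derived in \eqref{bnd_infeasibility}, and then apply the bounds \eqref{eq:bound_hat_y}--\eqref{eq:bound_hat_inner} on $\|\hat y_N - q_0\|_2^2$, $\|\hat z_N - r_0\|_2^2$, and $\hat z_N^T L_h$ to express the result in terms of $\|y^*\|_2$ and $\|z^*\|_2$.

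The main obstacle is bookkeeping: keeping track of which positive squared-distance term is absorbed by which negative squared-distance term in the telescoping sum, since with the extra $\gamma_k$ term the cancellation pattern differs from the CoexCG case and the mismatch now flows into the extra accumulated factor $\tsum_{k=1}^N \tfrac{\alpha_k \gamma_k}{2\Gamma_k}$. Beyond this reconciliation and the replacement of $\tau_N$ by $\tau_N + \gamma_N$ in the final Young's-inequality step, the proof is a routine adaptation of the argument used for Theorem~\ref{The_Coex_main}.
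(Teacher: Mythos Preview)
Your proposal is correct and follows essentially the same approach as the paper's proof: apply Lemma~\ref{Lemma 2} to the recursion of Proposition~\ref{Prop_CoexDurCG}, telescope the cross-product and squared-distance terms using \eqref{cond2} (with the $\gamma_k$ regularization terms accumulating and the final negative term carrying coefficient $\tau_N+\gamma_N$), then absorb the leftover $k=N$ cross products via Young's inequality with $\tau_N+\gamma_N$ in the denominator. The paper likewise obtains \eqref{thm1_obj_result_ada} and \eqref{thm1_feas_result_ada} by specializing $w$ exactly as in Theorem~\ref{The_Coex_main}, so your derivation of the consequences matches as well.
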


\begin{proof}
It follows from Lemma~\ref{Lemma 2} and Proposition~\ref{Prop_CoexDurCG}  that
\begin{align*}
\tfrac{Q(w_N,w)}{\Gamma_N} \leq&  (1-\alpha_1) Q(w_0,w)+ \tsum_{k=1}^N[\tfrac{(L_f+z^T L_h)\alpha_k^2 D_X^2 }{2\Gamma_k} +\tfrac{\alpha_k\lambda_k^2(9 \bar M_h^2 + \|A\|^2)D_X^2}{2\tau_k\Gamma_k}]\\
& + \tsum_{k=1}^N \tfrac{\alpha_k}{\Gamma_k} [ \langle A(p_k-p_{k-1}), y- q_k\rangle - \lambda_k \langle A (p_{k-1}-p_{k-2}), y - q_{k-1}\rangle]\\
& + \tsum_{k=1}^N\tfrac{\alpha_k}{\Gamma_k}[\langle l_h(x_{k-1},p_k)-l_h(x_{k-2},p_{k-1}), z - r_k\rangle  \\
& \quad \quad  - \lambda_k \langle l_h(x_{k-2},p_{k-1})-l_h(x_{k-3},p_{k-2}), z - r_{k-1}\rangle ]\\
& + \tsum_{k=1}^N\left[\tfrac{\alpha_k\tau_k}{2\Gamma_k}(\|y-q_{k-1}\|_2^2+\|z-r_{k-1}\|_2^2) -\tfrac{\alpha_k(\tau_k+\gamma_k)}{2\Gamma_k}(\|y-q_k\|_2^2+\|z-r_k\|_2^2) \right] \\
& + \tsum_{k=1}^N \tfrac{\alpha_k\gamma_k}{2\Gamma_k}[\|y-q_{0}\|_2^2 + \|z-r_{0}\|_2^2 ], 
\end{align*}
which, in view of \eqref{cond2}, then implies that
\begin{align*}
Q(w_N,w) &\leq  \Gamma_N \tsum_{k=1}^N[\tfrac{(L_f+z^T L_h)\alpha_k^2 D_X^2 }{2\Gamma_k} +\tfrac{\alpha_k\lambda_k^2(9 \bar M_h^2 + \|A\|^2)D_X^2}{2\tau_k\Gamma_k}]\\
&\quad +\alpha_N \langle A(p_N-p_{N-1}), y-q_N\rangle -\tfrac{\alpha_N(\tau_N+\gamma_N)}{2}\|y-q_N\|_2^2 \\
&\quad + \alpha_N\langle l_h(x_{N-1},p_N)-l_h(x_{N-2},p_{N-1}),z-r_N\rangle -\tfrac{\alpha_N(\tau_N+\gamma_N)}{2}\|z-r_N\|_2^2 \\
&\quad  + \tfrac{\alpha_1\tau_1\Gamma_N}{2}[\|y-q_{0}\|_2^2 + \|z-r_{0}\|_2^2 ]\\
& \quad + \Gamma_N \tsum_{k=1}^N \tfrac{\alpha_k\gamma_k}{2\Gamma_k}[\|y-q_{0}\|^2 + \|z-r_{0}\|^2 ]\\
& \leq   \Gamma_N \tsum_{k=1}^N[\tfrac{(L_f+z^T L_h)\alpha_k^2 D_X^2 }{2\Gamma_k} +\tfrac{\alpha_k\lambda_k^2(9 \bar M_h^2 + \|A\|^2)D_X^2}{2\tau_k\Gamma_k}]\\
&\quad +\tfrac{\alpha_N}{2(\tau_N+\gamma_N)} \|A\|^2\|p_N-p_{N-1}\|_2^2 +\tfrac{9 \bar M_h^2 \alpha_N D_X^2}{2(\tau_N+\gamma_N)}\\
& \quad  + \tfrac{\alpha_1\tau_1\Gamma_N}{2}[\|y-q_{0}\|_2^2+ \|z-r_{0}\|_2^2]\\
&\quad  + \Gamma_N \tsum_{k=1}^N \tfrac{\alpha_k\gamma_k}{2\Gamma_k}[\|y-q_{0}\|_2^2 + \|z-r_{0}\|_2^2 ],
\end{align*}
where the last relation follows from Young's inequality and 
a result similar to \eqnok{eq:Coex_temp4}. The result in \eqnok{thm1_gap_result} then immediately follows from the above inequality.
We can show \eqnok{thm1_obj_result_ada} and \eqnok{thm1_feas_result_ada} similarly to
\eqnok{thm1_obj_result} and \eqnok{thm1_feas_result}, and hence the details are skipped.
\end{proof}

\vgap

Corollary~\ref{Cor_CoexDurCG} below shows how to specify the algorithmic parameters,
including the regularization weight $\gamma_k$, for the CoexDurCG method.
In particular, the selection of $\tau_k$ was inspired by the one used in \eqnok{para_Coex_main},
and $\gamma_k$ was chosen so that the last relation in \eqnok{cond2} is satisfied. 

\begin{corollary}\label{Cor_CoexDurCG}
If the algorithmic parameters $\alpha_k$, $\lambda_k$, $\tau_k$ and $\gamma_k$
of the CoexDurCG method are set to
\begin{equation}\label{para_CoexDur_main}
\alpha_k = \tfrac{2}{k+1}, \lambda_k = \tfrac{k-1}{k},\ \tau_k = \beta \sqrt{k}, \ \mbox{and} \  \gamma_k = \tfrac{\beta}{k}[(k+1)\sqrt{k+1} - k\sqrt{k}],
\end{equation}
with $\beta = D_X \sqrt{9 \bar M_h^2 +\|A\|^2}$ for $k \ge 1$, then
we have, $ \forall w \in X \times \bbr^m \times \bbr^d_+$,
\begin{equation}\label{cor 4}
\begin{aligned}
Q(z_k,z) \leq &  \tfrac{2 (L_f+z^T L_h) D_X^2}{N+1} +  \tfrac{D_X \sqrt{9 \bar M_h^2 +\|A\|^2}}{\sqrt{N}} \left[3 (\|y-q_0\|_2^2 +\|z-r_0\|_2^2)+1\right].
\end{aligned}
\end{equation}
In addition, we have
\begin{align}
f(x_N) - f(x^*) &\le  \tfrac{2 L_f D_X^2}{N+1} +  \tfrac{D_X \sqrt{9 \bar M_h^2 +\|A\|^2}}{\sqrt{N}} \left[3 (\|q_0\|_2^2 +\|r_0\|_2^2)+1\right] \label{cor_obj_result_ada}
\end{align}
and
\begin{align}
&\|g(x_N)\|_2 + \|[h(x_N)]_+\|_2 \le \tfrac{2 (L_f+(\|z^*\|_2+1) \bar L_h) D_X^2}{N+1} \nn \\
&\quad \quad +   \tfrac{D_X \sqrt{9 \bar M_h^2 +\|A\|^2}}{\sqrt{N}} \left[3 [(\|y^*\|_2+1)^2 + (\|z^*\|_2+1)^2  + \|q_0\|_2^2+ \|r_0\|_2^2] + 1\right],\label{cor_feas_result_ada}
\end{align}
where $(x^*, y^*, z^*)$ denotes a triple of optimal solutions for problem~\eqnok{minmax}.
\end{corollary}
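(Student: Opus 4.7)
The plan is to mirror the proof of Corollary~\ref{Cor_Coex_main} but using Theorem~\ref{the_CoexDurCG} in place of Theorem~\ref{The_Coex_main}, with the extra work concentrated on handling the regularization weights $\gamma_k$. First I would verify that the choice in \eqref{para_CoexDur_main} satisfies the three conditions in \eqref{cond2}. Since $\alpha_k=2/(k+1)$, we immediately get $\Gamma_k=2/[k(k+1)]$ and $\alpha_k/\Gamma_k=k$, so $\alpha_1=1$ and $\lambda_k\alpha_k/\Gamma_k = (k-1) = \alpha_{k-1}/\Gamma_{k-1}$. The third condition reduces to checking that $\alpha_k\tau_k/\Gamma_k = k\beta\sqrt{k} = \beta k\sqrt{k}$ is at most $\alpha_{k-1}(\tau_{k-1}+\gamma_{k-1})/\Gamma_{k-1} = (k-1)(\tau_{k-1}+\gamma_{k-1})$; plugging in the definition of $\gamma_{k-1}$ shows that this inequality holds as an equality (by design), which is the motivation for the particular form chosen for $\gamma_k$.

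Next I would exploit the telescoping structure that makes this parameter choice work. With $\alpha_k\gamma_k/(2\Gamma_k) = (k/2)\gamma_k = (\beta/2)[(k+1)\sqrt{k+1}-k\sqrt{k}]$, the sum telescopes cleanly:
\begin{equation*}
\tsum_{k=1}^N \tfrac{\alpha_k\gamma_k}{2\Gamma_k} = \tfrac{\beta}{2}\left[(N+1)\sqrt{N+1}-1\right],
\end{equation*}
so the factor $\tau_1/2 + \sum_{k=1}^N \alpha_k\gamma_k/(2\Gamma_k)$ appearing in \eqref{thm1_gap_result_ada} simplifies to $(\beta/2)(N+1)\sqrt{N+1}$, and multiplying by $\Gamma_N = 2/[N(N+1)]$ gives a bound of order $\beta/\sqrt{N}$ on the $\|y-q_0\|_2^2 + \|z-r_0\|_2^2$ coefficient. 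For the remaining sums I would reuse the two inequalities derived in the proof of Corollary~\ref{Cor_Coex_main}, namely $\sum_{k=1}^N \alpha_k^2/\Gamma_k \le 2N$ (giving the $L_f$ term of order $1/(N+1)$) and a direct bound on $\sum_{k=1}^N \alpha_k\lambda_k^2/(\tau_k\Gamma_k) = \sum_{k=1}^N (k-1)^2/(k\beta\sqrt{k})$, which is dominated by $(2/3)N^{3/2}/\beta$ and thus contributes a term of order $\beta/\sqrt{N}$ after multiplication by $\Gamma_N$. The stand-alone term $\alpha_N(9\bar M_h^2+\|A\|^2)D_X^2/[2(\tau_N+\gamma_N)]$ is of order $\beta/N^{3/2}$ and is absorbed into the $1$ inside the brackets of \eqref{cor 4}.

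Substituting all of these estimates into \eqref{thm1_gap_result_ada} and combining the $\beta/\sqrt{N}$ terms (with a generous constant of $3$) yields \eqref{cor 4}. The bounds \eqref{cor_obj_result_ada} and \eqref{cor_feas_result_ada} then follow exactly as in the proof of Corollary~\ref{Cor_Coex_main}: for the objective gap, specialize $w$ in \eqref{cor 4} to $(x^*,0,0)$ using $g(x^*)=0$, $h(x^*)\le 0$; for the infeasibility bound, apply the construction of $\hat w_N^* = (x^*,\hat y_N,\hat z_N)$ from \eqref{eq:def_hat_y}--\eqref{eq:def_hat_w}, invoke \eqref{bnd_infeasibility} to dominate $\|g(x_N)\|_2+\|[h(x_N)]_+\|_2$ by $Q(w_N,\hat w_N^*)$, and control $z^T L_h$ and $\|\hat y_N-q_0\|_2^2+\|\hat z_N-r_0\|_2^2$ through \eqref{eq:bound_hat_y}--\eqref{eq:bound_hat_inner}.

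The main obstacle is really just the bookkeeping around $\gamma_k$: one needs to pick $\gamma_k$ so that the third condition in \eqref{cond2} holds with equality, while simultaneously ensuring that $\sum \alpha_k\gamma_k/(2\Gamma_k)$ remains of order $\beta\sqrt{N}$ (so that after the $\Gamma_N$ prefactor we still obtain the desired $\mathcal{O}(1/\sqrt{N})$ rate). The telescoping identity above is the key ingredient that makes both requirements compatible; once it is in place, the rest of the argument is a routine rearrangement mirroring Corollary~\ref{Cor_Coex_main}.
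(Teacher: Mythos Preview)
Your proposal is correct and follows essentially the same route as the paper: verify \eqref{cond2} (with the third condition holding as an equality by the construction of $\gamma_k$), exploit the telescoping identity $\tsum_{k=1}^N \alpha_k\gamma_k/\Gamma_k = \beta[(N+1)\sqrt{N+1}-1]$ together with the bounds $\tsum \alpha_k^2/\Gamma_k \le 2N$ and $\tsum \alpha_k\lambda_k^2/(\tau_k\Gamma_k) \le (2/(3\beta))N^{3/2}$, and then substitute into \eqref{thm1_gap_result_ada}. Your observation that $\tau_1/2$ combines with the telescoping sum to give exactly $(\beta/2)(N+1)\sqrt{N+1}$ is a nice simplification, and your treatment of \eqref{cor_obj_result_ada}--\eqref{cor_feas_result_ada} via the $\hat w_N^*$ construction is precisely what the paper indicates but skips.
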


\begin{proof}
From the definition of $\alpha_k$ in \eqnok{para_CoexDur_main}, we have $\Gamma_k = 2/[k(k+1)]$ and $\alpha_k / \Gamma_k = k$.
Hence the first two conditions in \eqnok{cond2} hold. In addition, it follows from these identities and \eqnok{para_CoexDur_main}
that $\tfrac{\alpha_k\tau_k}{\Gamma_k} = \beta k\sqrt{k}$ and
\begin{align*}
\tfrac{\alpha_{k-1}(\tau_{k-1}+\gamma_{k-1})}{\Gamma_{k-1}} = (k-1)\left [\beta \sqrt{k-1} + \tfrac{\beta}{k-1}[k\sqrt{k} - (k-1)\sqrt{k-1}]\right] = \beta k \sqrt{k},
\end{align*}
and hence that the last relation in \eqnok{cond2} also holds.
Observe that by \eqnok{para_CoexDur_main},
\begin{align}
\tsum_{k=1}^N \tfrac{\alpha_k^2}{\Gamma_k} &= 2\tsum_{k=1}^N \tfrac{k}{k+1} \le 2 N, \label{eq:coexduCG_gamma_1}\\
 \tsum_{k=1}^N\tfrac{\alpha_k\gamma_k}{\Gamma_k} &= \beta\tsum_{k=1}^N[(k+1)\sqrt{k+1}-k\sqrt{k}] = \beta[(N+1)\sqrt{N+1}-1],\label{eq:coexduCG_gamma_2}\\
 \tsum_{k=1}^N\tfrac{\alpha_k\lambda_k^2}{\tau_k\Gamma_k} &= \tsum_{k=1}^N\tfrac{(k-1)^2}{\beta k\sqrt{k}}\leq \tfrac{1}{\beta} \tsum_{k=1}^N \sqrt{k-1}
 \le \tfrac{1}{\beta} \int_0^N \sqrt{t} dt = \tfrac{2}{3\beta}N^{3/2}. \label{eq:coexduCG_gamma_3}
\end{align}
Using these relations in \eqnok{thm1_gap_result_ada}, we have
\begin{align*}
Q(w_N,w) &\leq \tfrac{2 (L_f+z^T L_h) D_X^2}{N+1} + \tfrac{2\sqrt{N} (9 \bar M_h^2+\|A\|^2)D_X^2}{3 \beta (N+1)} 
+ \tfrac{N (9\bar M_h^2+\|A\|^2) D_X^2}{\beta (N+1)^2 \sqrt{N+1}}
+ \tfrac{2 \beta\sqrt{N+1}}{N} (\|y-q_0\|_2^2 +\|z-r_0\|_2^2)\\
&= \tfrac{2 (L_f+z^T L_h) D_X^2}{N+1}
+\tfrac{D_X \sqrt{9 \bar M_h^2 +\|A\|^2}}{\sqrt{N}}
 \left[ \tfrac{2 }{3} + \tfrac{N}{(N+1)^2} + \tfrac{2 \sqrt{N+1}}{\sqrt{N}} (\|y-q_0\|_2^2 +\|z-r_0\|_2^2)\right] \\
 &\le \tfrac{2 (L_f+z^T L_h) D_X^2}{N+1}
+  \tfrac{D_X \sqrt{9 \bar M_h^2 +\|A\|^2}}{\sqrt{N}} \left[3 (\|y-q_0\|_2^2 +\|z-r_0\|_2^2)+1\right].
\end{align*}
The bounds in \eqnok{cor_obj_result_ada} and \eqnok{cor_feas_result_ada} can be shown similarly and the details are skipped.
\end{proof}

In view of the results obtained in Corollary~\ref{Cor_CoexDurCG},
the rate of convergence of CoexDurCG matches that of CoexCG. Moreover, 
the cost of each iteration of the CoexDurCG is the same as that of CoexCG.

\subsection{Structured Nonsmooth Functions} \label{sec_ada_nonsmooth}
In this subsection, we consider problem \eqnok{general} with structured nonsmooth functions $f$ and $h_i$  given in \eqnok{Nonsmooth}.
One possible way to solve this nonsmooth problem is to apply the CoexDurCG method for the smooth approximation
problem~\eqnok{general2}. However, this approach still requires us to fix the number of iterations $N$ when choosing
smoothing parameters $\eta_i$, $i = 0, \ldots, d$. 

Our goal in this subsection is to generalize the CoexDurCG method to solve this structured nonsmooth problem directly. Rather than applying this algorithm
to problem~\eqnok{general2}, we modify the smoothing parameters $\eta_i$, $i = 0, \ldots, d$, 
at each iteration. More specifically,  we assume that
\beq \label{eq:decreasing_eta}
\eta_i^1 \ge \eta_i^2 \ge \ldots \ge \eta_i^k, \ \forall i = 0, \ldots, d,
\eeq
and define a sequence of smoothing functions $f_{\eta_0^k}(x)$ and $h_{i, \eta_i^k} (x)$, $i = 1, \ldots, d$,
according to \eqnok{eq:def_smoothapp_f} and \eqnok{eq:def_smoothapp_h}, respectively.
For simplicity, we denote
\[
f^k(x) \equiv f_{\eta_0^k}(x), \ \ h_i^k(x) \equiv h_{i, \eta_i^k} (x) \
\mbox{and} \
h^k(x) \equiv (h^k_1(x); \ldots; h^k_d(x)).
\]
Also let us define the Lipschitz constants 
\begin{align*}
L_f^k \equiv \tfrac{\|B\|^2}{\mu_0+\eta_0^k}, \ L_h^k \equiv (\tfrac{\|C_1\|^2}{\mu_1 + \eta_1^k}; \ldots; \tfrac{\|C_d\|^2}{\mu_d + \eta_d^k}), \ \mbox{and} \ \bar L_h^k \equiv \|L_h^k\|_2.
\end{align*}
It can be seen from \eqnok{eq:decreasing_eta} that
\beq \label{eq:app_relation_f}
f^{k-1}(x) \le f^k(x) \le f^{k-1}(x) + (\eta_0^{k-1} - \eta_0^k) D_U^2, \ \forall x \in X.
\eeq
Indeed, it suffices to show the second relation in \eqnok{eq:app_relation_f}.
By definition, we have
\begin{align*}
f^k(x) &= \max_{q \in Q} \{\langle Bx, q\rangle - \hat f(q) - \eta_0^k U(q)\} 
=  \max_{q \in Q} \{\langle Bx, q\rangle - \hat f(q) - \eta_0^{k-1} U(q) +(\eta_0^{k-1} - \eta_0^k) U(q)\}\\
&\le \max_{q \in Q} \{\langle Bx, q\rangle - \hat f(q) - \eta_0^{k-1} U(q) +(\eta_0^{k-1} - \eta_0^k) D_U^2\} 
= f^{k-1}(x) +(\eta_0^{k-1} - \eta_0^k) D_U^2,
\end{align*}
where the inequality follows from the definition of $D_U$ in \eqnok{eq:def_DU} and the assumption $\eta_0^{k-1} \ge \eta_0^k$ in \eqnok{eq:decreasing_eta}.
Similarly, we have
\beq \label{eq:app_relation_h}
h_i^{k-1}(x) \le h_i^{k}(x) \le h_i^{k-1}(x) + (\eta_i^{k-1} - \eta_i^k) D_{V_i}^2, \ \forall x X, \ i =1, \ldots,d.
\eeq
Note that in our algorithmic scheme, we can set $\eta_i^k = 0$, $i = 0, 1,\ldots, d$, if the corresponding
objective or constraint functions are smooth (i.e., $\mu_i = 0$).

We now describe the more general CoexDurCG method for solving structured nonsmooth problems.
\begin{algorithm}[H]
\caption{CoexDurCG for Structured Nonsmooth Problems}\label{algAdaCoexCG_nonsmooth}
\begin{algorithmic} 
\State The algorithm is the same as 
Algorithm~\ref{algAdaCoexCG} except that the extrapolation step \eqnok{eq:step_ex2} is replaced by
\beq \label{eq:step_ex2_ada}
\tilde h_k = l_{h^{k-1}}(x_{k-2},p_{k-1})+\lambda_k [l_{h^{k-1}}(x_{k-2},p_{k-1})-l_{h^{k-2}}(x_{k-3},p_{k-2})],
\eeq
and the linear optimization step is replaced by
\beq \label{eq:step_lo_ada}
p_k = \argmin_{x\in X} \{ l_{f^{k}}(x_{k-1}, x) + \langle g(x), q_k \rangle +  \langle l_{h^k}(x_{k-1},x),  r_k\rangle \}.
\eeq
\end{algorithmic}
\end{algorithm}
In Algorithm~\ref{algAdaCoexCG_nonsmooth} we do not explicitly use the smooth approximation problem~\eqnok{general2}.
Instead, we incorporate in \eqnok{eq:step_ex2_ada} and \eqnok{eq:step_lo_ada}
the adaptive linear approximation functions $l_{h^k}$ and $l_{f^k}$ for the objective and constraints, respectively.
The convergence analysis of this algorithm relies on the adaptive primal-dual gap function:
\beq \label{eq:defQ_k}
Q^k(\bar w,w) \equiv Q_{\eta^k}(\bar w,w) 
:= f^k(\bar x) -f^k(x) +\langle g(\bar x), y\rangle-\langle g(x), \bar y \rangle + \langle h^k(\bar x), z\rangle - \langle h^k(x),\bar z\rangle,
\eeq 
as demonstrated in the following result.

\begin{proposition}\label{Prop_CoexDurCG_nonsmooth}
For any $k>1$, we have
\begin{align*}
Q^k(w_k,w) &\leq  (1-\alpha_k) Q^{k-1}(w_{k-1},w)+ \tfrac{(L_f^k+z^T L_h^k)\alpha_k^2 D_X^2}{2} \\
&\quad + (1 -\alpha_k)[ (\eta_0^{k-1} - \eta_0^k) D_U^2 + \tsum_{i=1}^d (\eta_i^{k-1} - \eta_i^k) z_i D_{V_i}^2] \\
&\quad + \tfrac{\alpha_k\lambda_k^2 (12 \bar M_{C,V}^2+\|A\|^2) D_X^2}{2 \tau_k}+ \tfrac{3 \lambda_k^2}{\tau_k} \tsum_{i=1}^d (\eta_i^{k-2} -\eta_i^{k-1})^2 D_{V_i}^4 \\ 
&\quad + \alpha_k [ \langle A(p_k-p_{k-1}), y- q_k\rangle - \lambda_k\langle A (p_{k-1}-p_{k-2}), y -q_{k-1}\rangle]\\
& \quad + \alpha_k[\langle l_{h^k}(x_{k-1},p_k)-l_{h^{k-1}}(x_{k-2},p_{k-1}), z- r_k\rangle \\
& \quad \quad \quad - \lambda_k \langle l_{h^{k-1}}(x_{k-2},p_{k-1})-l_{h^{k-2}}(x_{k-3},p_{k-2}), z- r_{k-1}\rangle] \\
&\quad  + \tfrac{\alpha_k\tau_k}{2}( \|y-q_{k-1}\|_2^2+\|z-r_{k-1}\|_2^2) - \tfrac{\alpha_k(\tau_k+\gamma_k)}{2} (\|y-q_k\|_2^2 +\|z-r_k\|_2^2)\\
&\quad +\tfrac{\alpha_k\gamma_k}{2}[\|y-q_{0}\|_2^2 + \|z-r_{0}\|_2^2 ], \ \forall w \in X \times \bbr^m \times \bbr^d_+,
\end{align*}
where $D_X$ is defined in \eqnok{eq:def_D_X}.
\end{proposition}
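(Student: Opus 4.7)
The proof follows the template of Propositions~\ref{Prop_CoexCG} and~\ref{Prop_CoexDurCG}, adapted to the iteration-dependent smoothing parameters $\eta_i^k$. The plan is four steps, the last of which is the main technical novelty.

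First, since each $f^k$ and $h^k_i$ has Lipschitz gradient with constants $L^k_f$ and $L^k_{h,i}$ by \eqnok{eq:smoothapp_f_Lip} and \eqnok{eq:def_app_h_lip_grad}, and the rule $x_k=(1-\alpha_k)x_{k-1}+\alpha_k p_k$ is unchanged, the same descent inequality used at the start of the proof of Proposition~\ref{Prop_CoexCG} yields $f^k(x_k)\le(1-\alpha_k)f^k(x_{k-1})+\alpha_k l_{f^k}(x_{k-1},p_k)+\tfrac{L^k_f\alpha_k^2}{2}\|p_k-x_{k-1}\|^2$ and the componentwise analogue for $h^k_i(x_k)$. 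Plugging these into the definition \eqnok{eq:defQ_k} of $Q^k(w_k,w)$ produces terms of the form $(1-\alpha_k)f^k(x_{k-1})-\alpha_k f^k(x)$ and $(1-\alpha_k)\langle h^k(x_{k-1}),z\rangle-\langle h^k(x),z_k\rangle$. To recover $Q^{k-1}(w_{k-1},w)$ on the right-hand side I would apply \eqnok{eq:app_relation_f} and the componentwise version of \eqnok{eq:app_relation_h} in the appropriate directions (using $z,z_k\in\bbr^d_+$ and $1-\alpha_k\ge 0$); this substitution is exactly where the residue $(1-\alpha_k)[(\eta_0^{k-1}-\eta_0^k)D_U^2+\tsum_i(\eta_i^{k-1}-\eta_i^k)z_i D_{V_i}^2]$ enters the bound.

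Second, the optimality of $p_k$ in \eqnok{eq:step_lo_ada} together with the convexity of $f^k$ and $h^k_i$ (as in the proof of Proposition~\ref{Prop_CoexCG}) collapses the remaining bracket to $\alpha_k[\langle g(p_k),y-q_k\rangle+\langle l_{h^k}(x_{k-1},p_k),z-r_k\rangle]$. Multiplying the regularized dual optimality conditions \eqnok{opt_cond_dual1_ada}-\eqnok{opt_cond_dual2_ada} by $\alpha_k$ and adding, these inner products are rewritten as $\langle g(p_k)-\tilde g_k,y-q_k\rangle$ and $\langle l_{h^k}(x_{k-1},p_k)-\tilde h_k,z-r_k\rangle$, plus the regularizer telescopes and the absorbing terms $-\tfrac{\alpha_k\tau_k}{2}(\|q_k-q_{k-1}\|_2^2+\|r_k-r_{k-1}\|_2^2)$, exactly as in the proof of Proposition~\ref{Prop_CoexDurCG}. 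The affine extrapolation term $\alpha_k\langle g(p_k)-\tilde g_k,y-q_k\rangle$ is then treated verbatim as in \eqnok{eq:Coex_temp2}, contributing $\tfrac{\alpha_k\lambda_k^2\|A\|^2 D_X^2}{2\tau_k}$.

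The main obstacle is the nonlinear extrapolation residue $\lambda_k\langle l_{h^{k-1}}(x_{k-2},p_{k-1})-l_{h^{k-2}}(x_{k-3},p_{k-2}),r_k-r_{k-1}\rangle-\tfrac{\tau_k}{2}\|r_k-r_{k-1}\|_2^2$, which by Young's inequality is at most $\tfrac{\lambda_k^2}{2\tau_k}\tsum_i[l_{h^{k-1}_i}(x_{k-2},p_{k-1})-l_{h^{k-2}_i}(x_{k-3},p_{k-2})]^2$. To bound the $i$-th summand I would insert the intermediate quantity $l_{h^{k-2}_i}(x_{k-2},p_{k-1})$ and write $l_{h^{k-1}_i}(x_{k-2},p_{k-1})-l_{h^{k-2}_i}(x_{k-3},p_{k-2})=[h^{k-1}_i(x_{k-2})-h^{k-2}_i(x_{k-2})]+B_i$, where $B_i$ is the all-smoothing-at-level-$(k-2)$ analogue of the expression bounded in \eqnok{eq:Coex_temp4}. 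The first piece has absolute value at most $(\eta_i^{k-2}-\eta_i^{k-1})D_{V_i}^2$ by \eqnok{eq:app_relation_h}, while $|B_i|\le 3M_{C_i,V_i}D_X$ by the triangle inequality together with the smoothing-uniform gradient bound \eqnok{eq:def_Lips_MC}. A weighted Young inequality $(a+b)^2\le(1+\epsilon)a^2+(1+1/\epsilon)b^2$ with $\epsilon=1/3$ then separates the square into a $12 M_{C_i,V_i}^2 D_X^2$ part (which combines with the $\|A\|^2 D_X^2$ from the affine side to give the $\tfrac{\alpha_k\lambda_k^2(12\bar M_{C,V}^2+\|A\|^2)D_X^2}{2\tau_k}$ contribution after multiplication back by $\alpha_k$) and a part proportional to $(\eta_i^{k-2}-\eta_i^{k-1})^2 D_{V_i}^4$; summing over $i$ and loosely bounding $\alpha_k\le 1$ absorbs the latter into $\tfrac{3\lambda_k^2}{\tau_k}\tsum_i(\eta_i^{k-2}-\eta_i^{k-1})^2 D_{V_i}^4$. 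Collecting all of the bounds gives the claimed recursion. The delicate point is precisely this decomposition: the adaptive smoothing makes $l_{h^{k-1}}$ and $l_{h^{k-2}}$ linearize \emph{different} underlying functions, and one must cleanly separate the ``change-of-point'' error (uniformly controlled by \eqnok{eq:def_Lips_MC}) from the new ``change-of-smoothing'' error (controlled by the telescoping differences of $\eta_i^k$).
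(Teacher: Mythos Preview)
Your proposal is correct and follows essentially the paper's approach. One small wrinkle: the phrase ``insert the intermediate quantity $l_{h^{k-2}_i}(x_{k-2},p_{k-1})$'' does not literally produce the split you then write, since $l_{h^{k-1}_i}(x_{k-2},p_{k-1})-l_{h^{k-2}_i}(x_{k-2},p_{k-1})$ also carries the gradient-difference term $\langle\nabla h^{k-1}_i(x_{k-2})-\nabla h^{k-2}_i(x_{k-2}),\,p_{k-1}-x_{k-2}\rangle$, for which no direct $\eta$-bound is available. What actually works (and what your stated bounds support) is simply to \emph{define} $B_i$ as the remainder after subtracting $h_i^{k-1}(x_{k-2})-h_i^{k-2}(x_{k-2})$; one then has
\[
B_i=h_i^{k-2}(x_{k-2})-h_i^{k-2}(x_{k-3})+\langle\nabla h_i^{k-1}(x_{k-2}),p_{k-1}-x_{k-2}\rangle-\langle\nabla h_i^{k-2}(x_{k-3}),p_{k-2}-x_{k-3}\rangle,
\]
each of whose three pieces is bounded by $M_{C_i,V_i}D_X$ via the smoothing-uniform bound \eqnok{eq:def_Lips_MC}, so $|B_i|\le 3M_{C_i,V_i}D_X$ as you claim (note $B_i$ is not literally ``all at level $k-2$'', but this is irrelevant for the bound). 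Your weighted Young inequality then gives exactly the constants $12$ and (after $\alpha_k\le 1$) $3$. The paper organizes the same estimate slightly differently in \eqnok{eq:Coex_du_temp4}: it first applies $(a+b+c)^2\le 3(a^2+b^2+c^2)$ to the three-term expansion of $l_{h^{k-1}_i}(x_{k-2},p_{k-1})-l_{h^{k-2}_i}(x_{k-3},p_{k-2})$, and only afterward splits $h_i^{k-1}(x_{k-2})-h_i^{k-2}(x_{k-3})$ into smoothing-change and point-change pieces via $(a+b)^2\le 2a^2+2b^2$. Both routes isolate the same ``change-of-smoothing'' term and land on the same constants.
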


\begin{proof}
Similar to \eqnok{eq:Coex_temp1_ada}, we can show that
\begin{align}
Q^k(w_k,w) 
&  \leq  (1-\alpha_k) Q^k(w_{k-1},w)+ \tfrac{(L_f^k+z^T L_h^k)\alpha_k^2 D_X^2}{2} \nn \\
&\quad +\alpha_k \langle g(p_k)- \tilde g_k), y- q_k\rangle +\alpha_k \langle l_{h^k}(x_{k-1},p_k)- \tilde h_k, z - r_k\rangle\nn \\
&\quad + \tfrac{\alpha_k\tau_k}{2}[\|y-q_{k-1}\|_2^2-\|q_k-q_{k-1}\|_2^2] - \tfrac{\alpha_k(\tau_k+\gamma_k)}{2} \|y-q_k\|_2^2 \nn \\
 &\quad + \tfrac{\alpha_k\tau_k}{2}[\|z-r_{k-1}\|_2^2 -\|r_k-r_{k-1}\|_2^2] - \tfrac{\alpha_k(\tau_k+\gamma_k)}{2} \|z-r_k\|_2^2 \nn\\
 &\quad  +\tfrac{\alpha_k\gamma_k}{2}[\|y-q_{0}\|_2^2 + \|z-r_{0}\|_2^2 ], \ \forall w \in X \times \bbr^m \times \bbr^d_+. \label{eq:Coex_temp1_ada_nonsmooth}
\end{align}
Moreover, by the definition of $\tilde h_k$ in \eqnok{eq:step_ex2_ada}, we have
\begin{align}
&\langle l_{h^k}(x_{k-1},p_k)- \tilde h_k, z - r_k\rangle  -\tfrac{\tau_k}{2} \|r_k-r_{k-1}\|_2^2 \nn \\
& = \langle l_{h^k}(x_{k-1},p_k)- l_{h^{k-1}}(x_{k-2},p_{k-1}), z - r_k\rangle -\lambda_k\langle l_{h^{k-1}}(x_{k-2},p_{k-1})- l_{h^{k-2}}(x_{k-3},p_{k-2}), z - r_{k-1}\rangle \nn \\
& \quad + \lambda_k\langle l_{h^{k-1}}(x_{k-2},p_{k-1})- l_{h^{k-2}}(x_{k-3},p_{k-2}), r_k - r_{k-1}\rangle - \tfrac{\tau_k}{2} \|r_k-r_{k-1}\|_2^2 \nn\\
& \leq \langle l_{h^{k}}(x_{k-1},p_k)- l_{h^{k-1}}(x_{k-2},p_{k-1}), z - r_k\rangle \nn \\
& \quad -\lambda_k\langle l_{h^{k-1}}(x_{k-2},p_{k-1})- l_{h^{k-2}}(x_{k-3},p_{k-2}), z - r_{k-1}\rangle \nn\\
&\quad +  \tfrac{6\lambda_k^2D_X^2 \bar M_{C,V}^2}{\tau_k}+ \tfrac{3 \lambda_k^2}{\tau_k} \tsum_{i=1}^d (\eta_i^{k-2} -\eta_i^{k-1})^2 D_{V_i}^4, \label{eq:Coex_temp3_ada}
\end{align}
where the last inequality follows from
\begin{align}
&\lambda_k\langle l_{h^{k-1}}(x_{k-2},p_{k-1})- l_{h^{k-2}}(x_{k-3},p_{k-2}), r_k - r_{k-1}\rangle - \tfrac{\tau_k}{2} \|r_k-r_{k-1}\|_2^2 \nn \\
&\le \tfrac{\lambda_k^2}{2\tau_k} \tsum_{i=1}^d [ l_{h_i^{k-1}}(x_{k-2},p_{k-1})- l_{h_i^{k-2}}(x_{k-3},p_{k-2})]^2 \nn \\
&= \tfrac{\lambda_k^2}{2\tau_k}\tsum_{i=1}^d [ h_i^{k-1}(x_{k-2})-h_i^{k-2}(x_{k-3}) + \langle \nabla h_i^{k-1}(x_{k-2}),p_{k-1}-x_{k-2}\rangle + \langle \nabla h_i^{k-2}(x_{k-3}),p_{k-2}-x_{k-3}\rangle]^2 \nn \\
&\le \tfrac{3 \lambda_k^2}{2\tau_k} \tsum_{i=1}^d \left[ ( h_i^{k-1}(x_{k-2})-h_i^{k-2}(x_{k-3}))^2 + 2M_{C_i, V_i}^2 D_X^2\rangle \right] \nn\\
&\le  \tfrac{3 \lambda_k^2}{2\tau_k} \tsum_{i=1}^d  \left[ 2 ( h_i^{k-2}(x_{k-2})-h_i^{k-2}(x_{k-3}))^2 + 2 (\eta_i^{k-2} -\eta_i^{k-1})^2 D_{V_i}^4 + 2M_{C_i, V_i}^2 D_X^2\rangle \right] \nn\\
&\le \tfrac{6\lambda_k^2D_X^2}{\tau_k}\tsum_{i=1}^dM_{C_i,V_i}^2 +  \tfrac{3 \lambda_k^2}{\tau_k} \tsum_{i=1}^d (\eta_i^{k-2} -\eta_i^{k-1})^2 D_{V_i}^4\nn \\
&=  \tfrac{6\lambda_k^2D_X^2 \bar M_{C,V}^2}{\tau_k} +  \tfrac{3 \lambda_k^2}{\tau_k} \tsum_{i=1}^d (\eta_i^{k-2} -\eta_i^{k-1})^2 D_{V_i}^4. \label{eq:Coex_du_temp4}
\end{align}
Here, the first inequality follows from Young's inequality, the second inequality follows from the cauchy-schwarz inequality,
the definition of $D_X$ in \eqnok{eq:def_D_X} and 
the bound of $\nabla h_i^k$ in \eqnok{eq:def_Lips_MC},
the third inequality follows by the relation between $h_i^{k-1}$ and $h_i^{k-2}$ in \eqnok{eq:app_relation_h}
and the simple fact that $(a+b)^2 \le 2a^2+2b^2$,
and the last inequality follows from the Lipschitz continuity of $h_i^{k-2}$ and the bound in \eqnok{eq:def_Lips_MC}. 
In addition, it follows from \eqnok{eq:app_relation_f} and \eqnok{eq:app_relation_h} that for any $w \in X \times \bbr^m \times \bbr^d_+$,
\beq \label{eq:relation_Q_ks}
Q^k(w_{k-1},w) \le Q^{k-1}(w_{k-1},w) + (\eta_0^{k-1} - \eta_0^k) D_U^2 + \tsum_{i=1}^d (\eta_i^{k-1} - \eta_i^k) z_i D_{V_i}^2.
\eeq
The result follows by combining \eqnok{eq:Coex_temp1_ada_nonsmooth}, \eqnok{eq:Coex_temp3_ada}, \eqnok{eq:relation_Q_ks}
and the bound in \eqnok{eq:Coex_temp2}.
\end{proof}

\vgap

\begin{theorem} \label{the_CoexDurCG_nonsmooth}
Let $\Gamma_k$ be defined in \eqref{Def_Gamma} and assume that
the algorithmic parameters $\alpha_k, \tau_k$ and $\lambda_k$ in the CoexDurCG method in Algorithm~\ref{algAdaCoexCG_nonsmooth} satisfy \eqnok{cond2}.
Then we have, $\forall w \in X \times \bbr^m \times \bbr^d_+$,
\begin{equation}\label{thm2_gap_result_ada}
\begin{aligned}
Q(w_N,w) &\leq  \Gamma_N\tsum_{k=1}^N\left[\tfrac{(L_f^k+z^T L_h^k)\alpha_k^2D_X^2}{2\Gamma_k} +\tfrac{\alpha_k\lambda_k^2(12 \bar M_{C,V}^2+\|A\|^2)D_X^2}{2\tau_k\Gamma_k}+\tfrac{3\lambda_k^2}{\tau_k\Gamma_k}\tsum_{i=1}^d (\eta_i^{k-2}-\eta_i^{k-1})^2D_{V_i}^4\right] \\
&\quad+\Gamma_N\tsum_{k=1}^N\tfrac{\alpha_k}{\Gamma_k}(\eta_0^kD_U^2+\tsum_{i=1}^d\eta_i^kz_iD_{V_i}^2)+\tfrac{\alpha_N(12\bar M_{C,V}^2+\|A\|^2) D_X^2}{2(\tau_N+\gamma_N)}+\tfrac{6\alpha_N\tsum_{i=1}^d(\eta_i^{N-1}-\eta_i^N)^2D_{V_i}^4}{2(\tau_N+\gamma_N)}  \\
&\quad + \Gamma_N \left(\tfrac{\tau_1}{2}+\tsum_{k=1}^N \tfrac{\alpha_k\gamma_k}{2\Gamma_k}\right) (\|y-q_0\|_2^2 +\|z-r_0\|_2^2)+ \eta_0^ND_U^2+ \|z\|(\tsum_{i=1}^d(\eta_i^ND_{V_i}^2)^2)^{1/2},
\end{aligned}
\end{equation}
where $D_X$ is defined in \eqnok{eq:def_D_X}. As a consequence, we have
\begin{align}
f(x_N) - f(x^*) &\le \Gamma_N\tsum_{k=1}^N\left[\tfrac{L_f^k\alpha_k^2D_X^2}{2\Gamma_k} +\tfrac{\alpha_k\lambda_k^2(12 \bar M_{C,V}^2+\|A\|^2)D_X^2}{2\tau_k\Gamma_k}+\tfrac{3\lambda_k^2}{\tau_k\Gamma_k}\tsum_{i=1}^d (\eta_i^{k-2}-\eta_i^{k-1})^2D_{V_i}^4\right] \nn \\
&\quad+\Gamma_N\tsum_{k=1}^N\tfrac{\alpha_k}{\Gamma_k}\eta_0^kD_U^2+\tfrac{\alpha_N(12\bar M_{C,V}^2+\|A\|^2) D_X^2}{2(\tau_N+\gamma_N)}+\tfrac{6\alpha_N\tsum_{i=1}^d(\eta_i^{N-1}-\eta_i^N)^2D_{V_i}^4}{2(\tau_N+\gamma_N)} \nn \\
&\quad + \Gamma_N \left(\tfrac{\tau_1}{2}+\tsum_{k=1}^N \tfrac{\alpha_k\gamma_k}{2\Gamma_k}\right) (\|q_0\|_2^2 +\|r_0\|_2^2)+\eta_0^ND_U^2 \label{thm2_obj_result_ada}
\end{align}
and
\begin{align}
&\|g(x_N)\|_2 + \|[h(x_N)]_+\|_2 \nn \\
&\le   \Gamma_N\tsum_{k=1}^N\left[\tfrac{[L_f^k+(\|z^*\|_2+1) \bar L_h^k]\alpha_k^2D_X^2}
{2\Gamma_k} +\tfrac{\alpha_k\lambda_k^2(12 \bar M_{C,V}^2+\|A\|^2)D_X^2}{2\tau_k\Gamma_k}+\tfrac{3\lambda_k^2}{\tau_k\Gamma_k}\tsum_{i=1}^d (\eta_i^{k-2}-\eta_i^{k-1})^2D_{V_i}^4\right] \nn\\
&\quad+\Gamma_N\tsum_{k=1}^N\tfrac{\alpha_k}{\Gamma_k}(\eta_0^kD_U^2+\tsum_{i=1}^d\eta_i^k\hat z_iD_{V_i}^2)+\tfrac{\alpha_N(12\bar M_{C,V}^2+\|A\|^2) D_X^2}{2(\tau_N+\gamma_N)}+\tfrac{6\alpha_N\tsum_{i=1}^d(\eta_i^{N-1}-\eta_i^N)^2D_{V_i}^4}{2(\tau_N+\gamma_N)}  \nn\\
&\quad +\Gamma_N \left(\tau_1+\tsum_{k=1}^N \tfrac{\alpha_k\gamma_k}{2\Gamma_k}\right) [(\|y^*\|_2+1)^2 + \|q_0\|_2^2 + (\|z^*\|_2+1)^2 + \|r_0\|_2^2] \nn \\
&\quad +\eta_0^ND_U^2+ (\|z^*\|_2+1)(\tsum_{i=1}^d(\eta_i^ND_{V_i}^2)^2)^{1/2} ,
\label{thm2_feas_result_ada}
\end{align}
where $(x^*, y^*, z^*)$ denotes a triple of optimal solutions for problem~\eqnok{minmax}.
\end{theorem}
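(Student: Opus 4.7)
The plan is to mirror the proof of Theorem \ref{the_CoexDurCG}, but to carry along the extra error terms introduced by the adaptive smoothing $\eta^k$ and, at the very end, to translate the bound on $Q^N(w_N,w)$ into a bound on the true gap $Q(w_N,w)$ in the same spirit as the proof of Theorem \ref{the_nonsmooth}. Concretely, I would first apply Lemma \ref{Lemma 2} to the recursion of Proposition \ref{Prop_CoexDurCG_nonsmooth} with $\Delta_k = Q^k(w_k,w)$; since $\alpha_1 = 1$, the initial residual $(1-\alpha_1)Q^0(w_0,w)$ vanishes, and we obtain
\[
\tfrac{Q^N(w_N,w)}{\Gamma_N} \le \tsum_{k=1}^N \tfrac{B_k}{\Gamma_k},
\]
where $B_k$ collects the seven kinds of terms appearing on the right-hand side of Proposition \ref{Prop_CoexDurCG_nonsmooth}. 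Using the last relation of \eqnok{cond2}, exactly as in the proof of Theorem \ref{the_CoexDurCG}, the telescoping of the dual-proximal terms produces $-\tfrac{\alpha_N(\tau_N+\gamma_N)}{2}(\|y-q_N\|_2^2+\|z-r_N\|_2^2)$ together with the boundary extrapolation terms at $k=N$. Those inner products are absorbed by Young's inequality against the negative quadratics, using $\|A(p_N-p_{N-1})\|_2^2\le \|A\|^2 D_X^2$ and a bound of the type \eqnok{eq:Coex_du_temp4} on $\|l_{h^{N-1}}(x_{N-1},p_N)-l_{h^{N-2}}(x_{N-2},p_{N-1})\|_2^2$, which produces the factors $\tfrac{12\bar M_{C,V}^2+\|A\|^2}{2(\tau_N+\gamma_N)}$ and $\tfrac{(\eta_i^{N-1}-\eta_i^N)^2 D_{V_i}^4}{\tau_N+\gamma_N}$ in the theorem statement.

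The second step is to dispose of the two new ``smoothing drift'' contributions $(1-\alpha_k)[(\eta_0^{k-1}-\eta_0^k)D_U^2 + \tsum_i(\eta_i^{k-1}-\eta_i^k)z_i D_{V_i}^2]$. Using $(1-\alpha_k)/\Gamma_k = 1/\Gamma_{k-1}$ (which holds for $k\ge 2$ by \eqnok{Def_Gamma}) and Abel summation, together with the identity $\tfrac{1}{\Gamma_k}-\tfrac{1}{\Gamma_{k-1}}=\tfrac{\alpha_k}{\Gamma_k}$, one obtains
\[
\tsum_{k=1}^N \tfrac{(1-\alpha_k)(\eta_0^{k-1}-\eta_0^k)D_U^2}{\Gamma_k} \;=\; \tsum_{k=1}^N \tfrac{\alpha_k \eta_0^k D_U^2}{\Gamma_k} \;-\; \tfrac{\eta_0^N D_U^2}{\Gamma_N},
\]
and analogously for the $\eta_i^{k}$-terms. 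Multiplying through by $\Gamma_N$ produces the sum $\Gamma_N \tsum_k \tfrac{\alpha_k}{\Gamma_k}(\eta_0^k D_U^2 + \tsum_i \eta_i^k z_i D_{V_i}^2)$ minus the leftover $\eta_0^N D_U^2 + \tsum_i \eta_i^N z_i D_{V_i}^2$. The third step is the conversion $Q^N\mapsto Q$: since $f^N\le f\le f^N+\eta_0^N D_U^2$ by \eqnok{eq:app_close_f}, $h_i^N\le h_i\le h_i^N+\eta_i^N D_{V_i}^2$ by \eqnok{eq:app_close_h}, and $z,z_N\in \bbr^d_+$, one has $Q(w_N,w)\le Q^N(w_N,w)+\eta_0^N D_U^2 + \|z\|_2(\tsum_i(\eta_i^N D_{V_i}^2)^2)^{1/2}$, exactly as in \eqnok{eq:bnd_Q_eta_close}. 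The negative leftover from Abel's rearrangement is therefore absorbed, and after upper-bounding with slack the remaining $\|z\|_2(\cdots)^{1/2}$ term is added back, yielding \eqnok{thm2_gap_result_ada}.

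For \eqnok{thm2_obj_result_ada} and \eqnok{thm2_feas_result_ada} I would proceed exactly as in Theorem \ref{the_CoexDurCG} and Theorem \ref{the_nonsmooth}. Setting $w=(x^*,0,0)$ and using $g(x^*)=0$, $h(x^*)\le 0$ gives $f(x_N)-f(x^*)\le Q(w_N,(x^*,0,0))$, which directly produces \eqnok{thm2_obj_result_ada} (the $z_i$ sums drop out). For the feasibility bound, I would introduce $\hat y_N,\hat z_N,\hat w_N^*$ as in \eqnok{eq:def_hat_y}--\eqnok{eq:def_hat_w}, apply $\|g(x_N)\|_2+\|[h(x_N)]_+\|_2\le Q(w_N,\hat w_N^*)$ as in \eqnok{bnd_infeasibility}, and then substitute into \eqnok{thm2_gap_result_ada}; the bounds $\|\hat y_N-q_0\|_2^2\le 2[(\|y^*\|_2+1)^2+\|q_0\|_2^2]$, $\|\hat z_N-r_0\|_2^2\le 2[(\|z^*\|_2+1)^2+\|r_0\|_2^2]$, and $\hat z_N^T L_h^k \le (\|z^*\|_2+1)\bar L_h^k$ (cf.\ \eqnok{eq:bound_hat_y}--\eqnok{eq:bound_hat_inner}) produce \eqnok{thm2_feas_result_ada}.

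The main obstacle I expect is bookkeeping rather than a new idea: carefully tracking the Abel-summed $\eta^k$-contributions so they match the form in the theorem, and in particular verifying that the negative leftover $-\eta_0^N D_U^2 - \tsum_i \eta_i^N z_i D_{V_i}^2$ from the rearrangement really cancels (up to the $\|z\|_2$-type upper bound) the positive $+\eta_0^N D_U^2 + \|z\|_2(\tsum_i(\eta_i^N D_{V_i}^2)^2)^{1/2}$ contribution produced by the $Q^N\mapsto Q$ conversion. Once this identity is in hand, the rest is a direct adaptation of the arguments already carried out for Theorems \ref{The_Coex_main}, \ref{the_nonsmooth} and \ref{the_CoexDurCG}.
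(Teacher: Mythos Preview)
Your proposal is correct and follows essentially the same route as the paper's proof: apply Lemma~\ref{Lemma 2} to the recursion of Proposition~\ref{Prop_CoexDurCG_nonsmooth}, telescope the dual-proximal terms via \eqnok{cond2}, absorb the boundary inner products by Young's inequality together with an estimate of the type \eqnok{eq:Coex_du_temp4}, and finally pass from $Q^N$ to $Q$ using \eqnok{eq:bnd_Q_eta_close}. The only point where you are more explicit than the paper is the handling of the smoothing-drift terms $(1-\alpha_k)[(\eta_0^{k-1}-\eta_0^k)D_U^2+\cdots]$: the paper simply writes down $\Gamma_N\tsum_{k}\tfrac{\alpha_k}{\Gamma_k}(\eta_0^k D_U^2+\cdots)$ in the bound on $Q^N(w_N,w)$ without comment, whereas you justify it via the Abel identity $\tsum_{k=1}^N \tfrac{(1-\alpha_k)(\eta_0^{k-1}-\eta_0^k)}{\Gamma_k} = \tsum_{k=1}^N \tfrac{\alpha_k\eta_0^k}{\Gamma_k} - \tfrac{\eta_0^N}{\Gamma_N}$ and then drop the negative leftover. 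One small slip: the boundary inner product carries $l_{h^{N}}(x_{N-1},p_N)-l_{h^{N-1}}(x_{N-2},p_{N-1})$, with superscripts $N$ and $N-1$ (not $N-1$ and $N-2$ as you wrote), which is what produces the factor $(\eta_i^{N-1}-\eta_i^N)^2$ in \eqnok{thm2_gap_result_ada}.
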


\begin{proof}
It follows from Lemma~\ref{Lemma 2}, Proposition~\ref{Prop_CoexDurCG_nonsmooth}  and  \eqref{cond2} that
\begin{align*}
Q^N(w_N,w) &\leq  \Gamma_N \tsum_{k=1}^N[\tfrac{(L_f^k+z^T L_h^k)\alpha_k^2D_X^2}{2\Gamma_k} +\tfrac{\alpha_k\lambda_k^2(12 \bar M_{C,V}^2+\|A\|^2)D_X^2}{2\tau_k\Gamma_k}+\tfrac{3\lambda_k^2}{\tau_k\Gamma_k}\tsum_{i=1}^d (\eta_i^{k-2}-\eta_i^{k-1})^2D_{V_i}^4]\\
&\quad +\Gamma_N\tsum_{k=1}^N\tfrac{\alpha_k}{\Gamma_k}(\eta_0^kD_U^2+\tsum_{i=1}^d\eta_i^kz_iD_{V_i}^2)\\
&\quad +\alpha_N \langle A(p_N-p_{N-1}), y-q_N\rangle -\tfrac{\alpha_N(\tau_N+\gamma_N)}{2}\|y-q_N\|_2^2 \\
&\quad + \alpha_N\langle l_{h^{N}}(x_{N-1},p_N)-l_{h^{N-1}}(x_{N-2},p_{N-1}),z-r_N\rangle -\tfrac{\alpha_N(\tau_N+\gamma_N)}{2}\|z-r_N\|_2^2 \\
&\quad  + \tfrac{\alpha_1\tau_1\Gamma_N}{2}[\|y-q_{0}\|_2^2 + \|z-r_{0}\|_2^2 ] + \Gamma_N \tsum_{k=1}^N \tfrac{\alpha_k\gamma_k}{2\Gamma_k}[\|y-q_{0}\|^2 + \|z-r_{0}\|^2 ]\\
& \leq   \Gamma_N \tsum_{k=1}^N[\tfrac{(L_f^k+z^T L_h^k)\alpha_k^2D_X^2}{2\Gamma_k} +\tfrac{\alpha_k\lambda_k^2(12 \bar M_{C,V}^2+\|A\|^2)D_X^2}{2\tau_k\Gamma_k}+\tfrac{3\lambda_k^2}{\tau_k\Gamma_k}\tsum_{i=1}^d (\eta_i^{k-2}-\eta_i^{k-1})^2D_{V_i}^4]\\
&\quad +\Gamma_N\tsum_{k=1}^N\tfrac{\alpha_k}{\Gamma_k}(\eta_0^kD_U^2+\tsum_{i=1}^d\eta_i^kz_iD_{V_i}^2) \\
&\quad +\tfrac{\alpha_N}{2(\tau_N+\gamma_N)} \|A\|^2\|p_N-p_{N-1}\|_2^2 +\tfrac{12 \bar M_{C,V}^2 \alpha_N D_X^2}{2(\tau_N+\gamma_N)}+\tfrac{6\alpha_N\tsum_{i=1}^d(\eta_i^{N-1}-\eta_i^N)^2D_{V_i}^4}{2(\tau_N+\gamma_N)} \\
& \quad  + \tfrac{\alpha_1\tau_1\Gamma_N}{2}[\|y-q_{0}\|_2^2+ \|z-r_{0}\|_2^2]\\
&\quad  + \Gamma_N \tsum_{k=1}^N \tfrac{\alpha_k\gamma_k}{2\Gamma_k}[\|y-q_{0}\|_2^2 + \|z-r_{0}\|_2^2 ],
\end{align*}
where the last relation follows from Young's inequality and 
a result similar to \eqnok{eq:Coex_du_temp4}. 
The result in \eqnok{thm2_gap_result_ada} then immediately follows from the above inequality
and the observation that
$
Q(w^N,w) \leq Q^N(w^N,w) + \eta_0^ND_U^2+ \|z\|(\tsum_{i=1}^d(\eta_i^ND_{V_i}^2)^2)^{1/2}
$
due to  \eqnok{eq:bnd_Q_eta_close}.
We can show \eqnok{thm2_obj_result_ada} and \eqnok{thm2_feas_result_ada} similarly to
\eqnok{eq:nonsmooth_obj_result} and \eqnok{eq:nonsmooth_feas_result}, and hence the details are skipped.
\end{proof}

Corollary~\ref{Cor_CoexDurCG_nonsmooth} below shows how to specify the smoothing parameter $\{\eta^k_i\}$ in \eqref{eq:decreasing_eta} and other parameters
for the CoexDurCG method in Algorithm~\ref{algAdaCoexCG_nonsmooth}. We focus on the most challenging case when the objective function $f$ and all the constraint functions are
nonsmooth (i.e., $\mu_i = 0$, $i =1, \ldots, n$). Slightly improved rate of convergence can be obtained
by setting $\eta_i^k = 0$ for those component functions with $\mu_i > 0$. 

\begin{corollary}\label{Cor_CoexDurCG_nonsmooth}
Suppose that the parameters $\alpha_k$, $\lambda_k$, $\tau_k$ and $\gamma_k$
in Algorithm~\ref{algAdaCoexCG_nonsmooth} are set to \eqnok{para_CoexDur_main} with $\beta = D_X \sqrt{12\bar M_{C,V}^2 +\|A\|^2}$ for $k \ge 1$.
If the smoothing parameters $\eta_i^k$ are set to
\begin{equation}\label{eta_def_ada}
\eta_0^k = \tfrac{\|B\|D_X}{\sqrt{k}D_U}, \ \eta_i^k = \tfrac{\|C_i\|D_X}{\sqrt{k}D_{V_i}}, \ \forall i =1,\ldots, d,
\end{equation}
then
we have, $ \forall w \in X \times \bbr^m \times \bbr^d_+$,
\begin{equation}\label{cor 5}
\begin{aligned}
Q(w_k,w) \leq & \tfrac{8 (\|B\|D_U+\tsum_{i=1}^dz_i\|C_i\|D_{V_i}) D_X}{3\sqrt{N}} +\tfrac{\sqrt{12\bar M_{C,V}^2 +\|A\|^2}D_X}{\sqrt{N}} [2(\|y-q_0\|^2+\|z-r_0\|^2)+2]\\
& \quad + \tfrac{12\tsum_{i=1}^d\|C_i\|^2D_XD_{V_i}^2}{\sqrt{12\bar M_{C,V}^2 +\|A\|^2}(N+1)\sqrt{N}} + \tfrac{D_X}{\sqrt{N}}(\|B\|D_U+\|z\|\sqrt{\tsum_{i=1}^d\|C_i\|^2D_{V_i}^2}).
\end{aligned}
\end{equation}
In addition, we have
\begin{align}
f(x_N) - f(x^*) &\le\tfrac{11 \|B\|D_U D_X}{3\sqrt{N}} +\tfrac{\sqrt{12\bar M_{C,V}^2 +\|A\|^2}D_X}{\sqrt{N}} [2(\|q_0\|^2+\|r_0\|^2)+2] \nn \\
& \quad + \tfrac{12\tsum_{i=1}^d\|C_i\|^2D_XD_{V_i}^2}{\sqrt{12\bar M_{C,V}^2 +\|A\|^2}(N+1)\sqrt{N}} \label{cor_obj_result_nonsmooth}
\end{align}
and
\begin{align}
&\|g(x_N)\|_2 + \|[h(x_N)]_+\|_2 \le \tfrac{7 (\|B\|D_U+(\|z^*\|+1)\sqrt{\tsum_{i=1}^d\|C_i\|^2D_{V_i}^2}) D_X}{3\sqrt{N}}
 + \tfrac{12\tsum_{i=1}^d\|C_i\|^2D_XD_{V_i}^2}{\sqrt{12\bar M_{C,V}^2 +\|A\|^2}(N+1)\sqrt{N}} \nn \\
& +\tfrac{2\sqrt{12\bar M_{C,V}^2 +\|A\|^2}D_X}{\sqrt{N}}[4[(\|y^*\|_2+1)^2 + (\|z^*\|_2+1)^2  + \|q_0\|_2^2+ \|r_0\|_2^2] +2],\label{cor_feas_result_nonsmooth}
\end{align}
where $(x^*, y^*, z^*)$ denotes a triple of optimal solutions for problem~\eqnok{minmax}.
\end{corollary}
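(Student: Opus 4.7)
The plan is to substitute the prescribed parameter choices into the three bounds of Theorem~\ref{the_CoexDurCG_nonsmooth} and then estimate every sum that appears. Since the choices of $\alpha_k$, $\lambda_k$, $\tau_k$, and $\gamma_k$ are identical in form to those in Corollary~\ref{Cor_CoexDurCG} (only the constant $\beta$ is redefined to absorb the new factor $12\bar M_{C,V}^2+\|A\|^2$), the first step is to observe that condition \eqref{cond2} is verified by exactly the same calculation as in Corollary~\ref{Cor_CoexDurCG}. In particular $\Gamma_k = 2/[k(k+1)]$, $\alpha_k/\Gamma_k = k$, and the identities \eqref{eq:coexduCG_gamma_1}--\eqref{eq:coexduCG_gamma_3} (with the new $\beta$) carry over verbatim.

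Next, I would evaluate the extra quantities that are specific to the structured nonsmooth bound. Using \eqref{eta_def_ada} gives $L_f^k = \|B\|^2/\eta_0^k = \|B\| D_U \sqrt{k}/D_X$ and $\bar L_h^k = \sqrt{N}\sqrt{\sum_i (D_{V_i}\|C_i\|)^2}/D_X$ analogously, so the $\sum_k L_f^k \alpha_k^2 D_X^2/(2\Gamma_k)$ terms yield a bound of order $\|B\| D_U D_X/\sqrt{N}$ once multiplied by $\Gamma_N$. The terms $\Gamma_N\sum_k (\alpha_k/\Gamma_k)\eta_0^k D_U^2 = \Gamma_N\sum_k \sqrt{k}\|B\| D_X D_U$ contribute $O(\|B\|D_X D_U/\sqrt{N})$ by bounding $\sum_k \sqrt{k}\le (2/3)N^{3/2}$, and the corresponding $\eta_i^k z_i D_{V_i}^2$ terms contribute an analogous bound. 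The trailing $\eta_0^N D_U^2$ and $\|z\|(\sum_i (\eta_i^N D_{V_i}^2)^2)^{1/2}$ pieces come out directly as $\|B\|D_X D_U/\sqrt{N}$ and $\|z\| D_X \sqrt{\sum_i \|C_i\|^2 D_{V_i}^2}/\sqrt{N}$.

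The one new nontrivial piece is the telescoping difference term $\sum_i (\eta_i^{k-2}-\eta_i^{k-1})^2 D_{V_i}^4$. Writing $\eta_i^k = \|C_i\|D_X/(\sqrt{k}D_{V_i})$, I would use
\[
\left(\tfrac{1}{\sqrt{k-2}}-\tfrac{1}{\sqrt{k-1}}\right)^2 = \tfrac{1}{(k-2)(k-1)(\sqrt{k-1}+\sqrt{k-2})^2} \le \tfrac{C}{k^3}
\]
for some absolute constant $C$, so that $\sum_i(\eta_i^{k-2}-\eta_i^{k-1})^2 D_{V_i}^4 \lesssim \sum_i \|C_i\|^2 D_X^2 D_{V_i}^2/k^3$. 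Combined with $3\lambda_k^2/(\tau_k\Gamma_k) \lesssim k^{3/2}/\beta$, the sum $\Gamma_N\sum_k 3\lambda_k^2/(\tau_k\Gamma_k)\sum_i(\eta_i^{k-2}-\eta_i^{k-1})^2 D_{V_i}^4$ converges, giving a term of order $\sum_i \|C_i\|^2 D_X D_{V_i}^2/[\sqrt{12\bar M_{C,V}^2+\|A\|^2}(N+1)\sqrt{N}]$ after pulling out the $\Gamma_N\approx 2/(N(N+1))$ factor, and similarly for the $(\eta_i^{N-1}-\eta_i^N)^2$ single term which is dominated by this bound.

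Once all of these bounds are assembled, \eqref{cor 5} follows by plugging them into \eqref{thm2_gap_result_ada}. The bounds \eqref{cor_obj_result_nonsmooth} and \eqref{cor_feas_result_nonsmooth} then come from the same maneuvers used at the end of the proof of Theorem~\ref{the_nonsmooth}: setting $w=(x^*,0,0)$ in \eqref{thm2_obj_result_ada} produces the objective bound, while setting $w = \hat w_N^*$ as in \eqref{eq:def_hat_w} and applying \eqref{bnd_infeasibility} together with \eqref{eq:bound_hat_y}--\eqref{eq:bound_hat_inner} converts \eqref{thm2_feas_result_ada} into the constraint-violation bound. The main bookkeeping obstacle, as already indicated, is the careful accounting of the $(\eta_i^{k-2}-\eta_i^{k-1})^2$ contribution; everything else is direct calculation that mirrors Corollary~\ref{Cor_CoexDurCG} and Corollary~\ref{cor_nonsmooth}.
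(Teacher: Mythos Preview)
Your proposal is correct and follows essentially the same route as the paper: verify \eqref{cond2} and the sums \eqref{eq:coexduCG_gamma_1}--\eqref{eq:coexduCG_gamma_3} by appealing to Corollary~\ref{Cor_CoexDurCG}, compute $L_f^k$ and $L_{h,i}^k$ from \eqref{eta_def_ada}, bound the $(\eta_i^{k-2}-\eta_i^{k-1})^2$ contribution, and substitute into Theorem~\ref{the_CoexDurCG_nonsmooth}. The only minor difference is that the paper bounds $(\tfrac{1}{\sqrt{k-2}}-\tfrac{1}{\sqrt{k-1}})^2$ by $\tfrac{1}{(k-1)(k-2)}$ rather than by $C/k^3$; both estimates suffice. (Note the slip $\bar L_h^k = \sqrt{N}\,\cdots$ in your second paragraph should read $\sqrt{k}$.)
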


\begin{proof}
From the definition of $\alpha_k$ in \eqnok{para_CoexDur_main}, we have $\Gamma_k = 2/[k(k+1)]$ and $\alpha_k / \Gamma_k = k$.
Similarly to Corollary~\ref{Cor_CoexDurCG}, we can check that condition~\eqnok{cond2}, and the bounds in
\eqnok{eq:coexduCG_gamma_1}-\eqnok{eq:coexduCG_gamma_3} hold. In addition,
it follows from the definition of $\eta_i^k$ in \eqref{eta_def_ada} that
\begin{align*}
(\eta_i^{k-2}-\eta_i^{k-1})^2 = \tfrac{\|C_i\|^2D_X^2}{D_{V_i}^2}(\tfrac{1}{k-1}+\tfrac{1}{k-2}-\tfrac{2}{\sqrt{k-1}\sqrt{k-2}}) \leq  \tfrac{\|C_i\|^2D_X^2}{(k-1)(k-2)D_{V_i}^2},\\
L_f^k = \tfrac{\|B\|D_U\sqrt{k}}{D_X}, \ \mbox{and} \ L_{h,i}^k = \tfrac{\|C_i\|D_{V_i}\sqrt{k}}{D_X},\ \forall i =1,\ldots,d.
\end{align*}
Using these relations in \eqnok{thm2_gap_result_ada}, we have
\begin{align*}
Q(w_N,w) &\leq \tfrac{2 (\|B\|D_U+\tsum_{i=1}^dz_i\|C_i\|D_{V_i}) D_X}{N(N+1)}\tsum_{k=1}^N \tfrac{k\sqrt{k}}{k+1} + 
\tfrac{2\sqrt{12\bar M_{C,V}^2 +\|A\|^2}D_X}{3\sqrt{N}} \\
& \quad + \tfrac{3}{\beta N(N+1)}\tsum_{k=1}^N (\sqrt{k}(k+1)\tsum_{i=1}^d \tfrac{\|C_i\|^2D_X^2D_{V_i}^2}{(k-1)(k-2)}) \\
& \quad + \tfrac{2 (\|B\|D_U+\tsum_{i=1}^dz_i\|C_i\|D_{V_i}) D_X}{N(N+1)}\tsum_{k=1}^N\sqrt{k} + \tfrac{\sqrt{12\bar M_{C,V}^2 +\|A\|^2}D_X}{(N+1)\sqrt{N+1}} \\
& \quad +\tfrac{3\tsum_{i=1}^d\|C_i\|^2D_X^2D_{V_i}^2}{(N+1)\sqrt{N+1}(N-1)(N-2)} + \tfrac{\beta}{N(N+1)}[(N+1)\sqrt{N+1}][\|y-q_0\|^2+\|z-r_0\|^2] \\
& \quad + \eta_0^ND_U^2+ \|z\|(\tsum_{i=1}^d(\eta_i^ND_{V_i}^2)^2)^{1/2},
\end{align*}
which implies \eqref{cor 5} after simplification. \eqnok{cor_obj_result_nonsmooth} and \eqnok{cor_feas_result_nonsmooth} can be shown similarly and the details are skipped.
\end{proof}

Comparing the results in Corollary~\ref{Cor_CoexDurCG_nonsmooth} with those in Corollary~\ref{cor_nonsmooth},
we can see that the rate of convergence of CoexDurCG is about the same as that of CoexCG for nonsmooth
optimization. However, it is more convenient to implement CoexDurCG since it does not
require us  to fix the number of iterations a priori. 

\section{Numerical Experiments} \label{sec_num}
In this section, we  apply the proposed algorithms to the
intensity modulated radiation therapy (IMRT) problem briefly
discussed in Section~1. 
\subsection{Problem Formulation}
In IMRT, the patient will be irradiated by a linear accelerator (linac) from several angles and in each angle the device uses different apertures. 
In traditional IMRT,  we select and fix 5-9 angles and then design and optimize the apertures and their corresponding intensity. 
Following \cite{romeijn2005column},  we would like to integrate the angle selection into direct aperture optimization in order to use a small number of  angles and apertures in the final treatment plan.

To model the IMRT treatment planning, we discretize each structure $s$ of the patient into small cubic volume elements called \textit{voxels}, $\mathcal{V}$.
There are a finite number of angles, denoted by ${\mathcal A}$,  around the patient.
A beam in each angle, $b_a$, is decomposed into a rectangular grid of \textit{beamlets}. A beamlet $(i, j)$ is effective if it is not blocked by either the left, $l_i$, and right, $r_i$, leaves.
An aperture is then defined as the collection of effective beamlets.
The relative motion of the leaves controls  the set of effective beamlets and thus the shape of the aperture.
The estimated dose received by voxel $v$ from beamlet $(i, j)$ at unit intensity is denoted by $D_{(i, j) v }$ in Gy.
The dose absorbed by a given voxel is the summation of the dose from each individual beamlet.

Let $P_a$ be the set of allowed apertures determined by the position of the left and right leaves in beam angle $a$. 
Suppose that the rectangular grid in each angle has $m$ rows and $n$ columns, and the leaves move along each row
independently. Then the number of possible apertures in each angle amounts to $(\tfrac{n (n-1)}{2})^m$.
We use $\textbf{x}^{a,t}$, comprised of binary decision variables  $x_{(i, j)}^{a,t} $, to describe the shape of aperture $t \in P_a$.
In particular, $x_{(i, j)}^{a,t }= 1$ if beamlet $(i, j)$ is effective, i.e., falling within the left and right leaves of row $i$,
otherwise  $x_{(i, j)}^{a,t} = 0$. In addition to selecting angles and apertures, we also
need to determine the influence rate $y^{a,t}$ for aperture $t\in P_a$, which will be used to determine 
the dose intensity and the amount of radiation time from aperture $t$.
The dose absorbed by voxel $v$ is computed by $z_{v}= \tsum_{a \in \mathcal{A}} \tsum_{t \in P_a} \tsum_{i=1}^{m} \tsum_{j=1}^{n} R D_{(i, j) v} \, x_{ij}^{a,t} y^{a,t} $, based on the dose-influence matrix $D$, the aperture shape $\textbf{x}^k$, and the aperture influence rate $y^k$.
We measure the treatment quality by
$\textit{f}(\textbf{z}):= \sum_{v \in \mathcal{V}} {\underline{w}_v} \, [\underline{T}_{v} - z_{v}]_{+}^2 + {\overline{w}_v} \, [z_{v} - \overline{T}_{v}]_{+}^2$  
via voxel-based  quadratic penalty, where $[\cdot]_{+}$ denotes $\max\{0, \cdot \}$, 
and $\underline{T}_v$ and $\overline{T}_v$ are pre-specified lower and upper dose thresholds for voxel $v$. 

We also need to consider a few important function constraints. Firstly, 
in order to obtain a sparse solution with a small number of angles, we add the following group sparsity constraint 
 $\tsum_{a\in \mathcal{A}} \max_{t \in P_a} y^{a,t} \leq \Phi$
for some properly chosen $\Phi > 0$.
Intuitively, this constraint will encourage the selection of apertures in those angles $P_a$ that have already contained some nonzero 
elements of $y^{a,t}$, $t \in P_a$. Secondly, we need to 
meet a few critical clinical criteria to avoid underdose (resp., overdose) for tumor (resp., healthy) structures. 
These criteria are usually specified as
value at risk (VaR) constraints. For example, in 
the prostate benchmark dataset,  the clinical criterion of ``PTV56:V56$\geq 95\%$" means that the percentage of voxels 
in structure PTV56 that receive at least 56 Gy dose 
should be at least $95\%$. Similarly, the criterion of  ``PTV68: V74.8$\leq 10\%$" implies 
that the percentage of voxels in structure PTV68 that receive more than 74.8 Gy dose 
should be at most $10\%$. One possible way to satisfy these criteria
is to tune the weights ($(\underline w_v, \overline w_v)$) in $\textit{f}(\textbf{z})$. However, it would be time consuming
to tune
these weights to satisfy all the prescribed clinical criteria. Therefore, we suggest to
incorporate a few critical criteria as problem constraints explicitly.


%
Instead of using VaR, we will use its convex approximation,
commonly referred to as Conditional Value at Risk (CVaR) in the constraints~\cite{roc00}. Recall the following definitions of VaR and CVaR
\begin{align*}
\text{Upper tail: } & {\rm VaR}_\alpha(X) = \inf_\tau \{\tau: P(X\leq \tau)\geq \alpha\},  {\rm CVaR}_\alpha (X) = \inf_\tau \tau +\tfrac{1}{1-\alpha}\bbe[X - \tau]_+.\\
\text{Lower tail: } & {\rm VaR}_\alpha(X) = \sup_\tau \{\tau: P(X\geq \tau)\geq \alpha\}, {\rm CVaR}_\alpha (X) = \sup_\tau \tau - \tfrac{1}{1-\alpha}\bbe[\tau - X]_+.
\end{align*}
The upper (resp., lower) tail CVaR will be used to enforce the underdose (resp., overdose)
clinical criteria. 
For example, letting $S_1$ and $S_2$ denote structures PTV68 and PTV 56, and
$N_1$ and $N_2$ be the number of voxels in these structures, we can approximately formulate
 the criterion of ``PTV68: V74.8$\leq 10\%$" as
$
 \inf_{\tau} \tau_1+\tfrac{1}{(1-0.9)N_{1}}\tsum_{v\in S_1} [z_v - \tau_1]_+ \leq b
$
for some $b \ge 74.8$. Separately, the criterion of ``PTV56:V56$\geq 95\%$" will be approximated by
$
\sup_{\tau} \tau - \tfrac{1}{(1-0.95)N_{2}}\tsum_{v\in S_2} [\tau - z_v]_+ \geq b, 
$
or equivalently
$
\inf_{\tau} -\tau + \tfrac{1}{(1-0.95)N_{2}}\tsum_{v\in S_2} [\tau - z_v]_+ \leq -b
$
for some $b \le 56$.
Putting the above discussions together and
denoting $\hat D_{v}^{a,t} := \tsum_{i=1}^{m} \tsum_{j=1}^{n}  D_{(i, j) v}\, x_{ij}^{a,t} $,
we obtain the following problem formulation.
\begin{subequations}
\begin{align}
\min\quad \textit{f}(\textbf{z}) := &\tfrac{1}{N_v} \tsum_{v \in \mathcal{V}} {\underline{w}_v} \, [\underline{T}_{v} - z_{v}]_{+}^2 + {\overline{w}_v} \, [z_{v} - \overline{T}_{v}]_{+}^2 \label{obj}\\
\text{ s.t. }\quad & z_{v}= \tsum\limits_{a \in \mathcal{A}} \tsum\limits_{t \in P_a}R \hat D_{v}^{a,t} y^{a,t},  \\
& -\tau_i + \tfrac{1}{p_iN_i}\tsum_{v\in S_i}[\tau_i- z_v]_+ \leq -b_i, \forall i \in {\rm UD}, \label{cvar1}\\
& \tau_i + \tfrac{1}{p_iN_i}\tsum_{v\in S_i}[z_v - \tau_i]_+ \leq b_i, \forall i\in {\rm OD}, \label{cvar2} \\
& \tsum\nolimits_{a \in \mathcal{A}}\max_{t \in P_a} y^{a,t} \leq \Phi, \label{Group_s}\\
& \tsum\nolimits_{a \in \mathcal{A}} \tsum\nolimits_{t \in P_a} y^{a,t} \leq 1,  \label{simplex} \\
&  y^{a,t} \geq 0, \label{simplex0}\\
& \tau_i \in [\underline \tau_i, \bar \tau_i], \forall i\in {\rm UD}\ \&\ {\rm OD}, \label{tau_bound}
\end{align}
\end{subequations}
where OD and UD denote the set of overdose and underdose clinical criteria, respectively. Clearly, the objective function $f$ is convex and smooth.
Constraints in \eqnok{cvar1}, \eqnok{cvar2} and \eqnok{Group_s} are structured nonsmooth function constraints corresponding to the
function constraints $h$ in \eqnok{general},
while \eqnok{simplex}-\eqnok{simplex0} and  \eqnok{tau_bound},
respectively, define a simplex constraint on $y$ and
a box constraint on $\tau_i$, with their Catesian product 
corresponding to the convex set $X$ in \eqnok{general}. The bounds $\underline\tau$ and $\bar\tau$
in constraints \eqref{tau_bound} can 
be obtained from the corresponding clinical criteria.
For example, the criterion of ``PTV68:V68$\geq 95\%$" implies that value at risk $\geq 68$. By the definition of CVaR,
the optimal $\tau$ equals to the value at risk, hence we set $\underline \tau = 68$.
In a similar way, we set $\bar \tau = 74.8$ in view of the criterion of ``PTV68: V74.8$\leq 10\%$". 

We can apply
the CoexCG and CoexDurCG methods described in Subsections~\ref{sec_nonsmooth_basic} and ~\ref{sec_ada_nonsmooth}, respectively,
to solve problem \eqref{obj}-\eqref{tau_bound}.
Since the number of the potential apertures (i.e., the dimension of $y^{a,t}$) increases exponentially w.r.t. $m$,
we cannot compute the full gradient of the objective and constraint functions w.r.t. $y^{a,t}$. Instead, we will perform gradient computation
and linear optimization simultaneously. Let us focus on the CoexCG method for illustration. Denote the constraints~\eqref{cvar1}-\eqref{Group_s} as $h_{i}$, $i \in OD \cup UD$,
and let the corresponding smooth approximation $h_{i,\eta_i}$ be defined by \eqref{eq:def_smoothapp_h} (using entropy distances for smoothing). 
For a given search point $x_{k-1} := (\{y_{k-1}^{a,t}\}, \{\tau_{i,k-1}\})$ and dual variable $\{r_{i,k-1}\}$, let us 
denote $\pi^f_{k-1} = \partial f(\textbf{x}_{k-1})/\partial \textbf{z}$ and $\pi^{h_i}_{k-1} = \partial h_{i,\eta_i}(\textbf{x}_{k-1})/\partial \textbf{z}$. 
Clearly, in view of~\eqref{eq:step_lo},
$y_{k-1}^{a,t}$ will be updated to a properly chosen extreme point of the  simplex constraint in \eqref{simplex}-\eqref{simplex0}.
In order to determine this extreme point, we need to find the aperture with the smallest 
 coefficient in the linear objective of \eqref{eq:step_lo} given by:
 $$\psi^{a,t}:= \pi_{k-1}^f\tfrac{\partial z}{\partial y^{a,t}} + \tsum_i r_{i,k-1} \pi_{k-1}^{h_i}\tfrac{\partial z}{\partial y^{a,t}} 
 = R \tsum_{i=1}^{m} \tsum_{j=1}^{n} (\tsum_v D_{(i,j)v}(\pi^f_{v,k-1}+\tsum_i r_{i,k-1}\pi^{h_i}_{v,k-1}))x_{ij},\ x_{ij}\in \{0,1\}.$$
This can be achieved by using the following constructive approach.
For any row $i$ of the rectangular grid in angle $a$, we find the column indices $c_1$ and $c_2$, respectively,
for the left and right leaves, that give the most negative value of $\tsum_{c_1<j<c_2} \tsum_vD_{(i,j)v}(\pi^f_{v,k-1}+\tsum_i r_{i,k}\pi^{h_i}_{v,k-1})$.
Repeating this process row by row, we construct the aperture with the smallest value of $\psi^{a,t}$ in angle $a$.
We construct one aperture similar to this for each angle, and then choose the one with the most negative value of $\psi^{a,t}$ among all the angles. 
Therefore, to solve the linear optimization suproblem (i.e., to find the aperture with the smallest coefficient) only requires ${\cal O}\{ |{\mathcal A}| mn(n-1)\}$
arithmetic operations, even though the dimension of the problem (i.e., the total number of apertures) is given by $|{\mathcal A} |( n(n-1)/2)^m$.


\subsection{Comparison of CoexCG and CoexDurCG on randomly generated instances}
Due to the privacy issue, publicly available IMRT datasets for real patients are very limited. 
To test the performance of our proposed algorithms we first randomly generate some problem instances as follows.
Let $V= [-l,l]^3 \subseteq \bbr^3$ be a cube with length $l$. Viewing $V$ as the human body, we then arbitrarily choose two (or more) cuboids as healthy organs, 
and randomly choose 2 cubes inside $V$ as the target tumor tissues. For a given accuracy $\delta>0$, we discretize all these structures 
into small cubes with length $\delta$ to define a voxel. Around the cube $V$, we generate a circle with radius $2l$ on the plane $\{x = 0\}$, and define
 every two degrees as one angle for radiation therapy. In each angle, we consider the aperture as a square in $[-l,l]^2$, and
 also discretize it with small squares with length $\delta$, resulting in a grid with size $\tfrac{2l}{\delta}\times \tfrac{2l}{\delta}$. After that, we randomly generate $N_a$ beamlets with coordinate $(x',y') \in [-l,l]^2$ for each angle $a$. As for the matrix $D$ (recording the dose received by voxel $v$ from each beamlet), we first check if the voxel is radiated by the beamlet since each beamlet is a line perpendicular 
 to the aperture plane. If so, the dose received by the voxel from this beamlet will be set to $2/d$, where $d$ is the distance between the voxel and the aperture plane; otherwise, the dose is $0$.
By choosing different accuracy $\delta$, we can create instances with different sizes
in terms of the number of voxels and potential apertures. 
Table~\ref{Rand_data} shows five different test instances generated with $l= 8$.
We set $\delta = 1$ and $0.25$ for the first three instances (Ins. 1, Ins. 2 and Ins. 3),
and the last two instances (Ins. 4 and Ins. 5), respectively.
Note that we consider 2 underdose and 1 overdose constraints and their corresponding r.h.s. $b$ and $p$
are shown in the last column of Table~\ref{Rand_data}. We set the $\underline T_v = \bar T_v = 56$ for tumor tissue and $\underline T_v = \bar T_v = 0$ for healthy organ in \eqref{obj}. In addition,
we set $\Phi = 0.2$ for the group sparsity constraint in \eqnok{Group_s}.

We implement in Matlab the CoexCG and CoexDurCG algorithms for structured nonsmooth
problems, and report the computational results 
in Table~\ref{Comp1}.  Here we use $x_N:=(y_N,\tau_N)$, $f(x_N)$ and $\|h(x_N)\|$, respectively, to denote the output solution, 
the objective value and constraint violations.
The CPU times are in seconds on a Macbook Pro with 2.6 GHz 6-Core Intel Core i7 processor.
As shown in Table~\ref{Comp1}, both CoexCG and CoexDurCG exhibit comparable performance 
in terms of objective value, constraint violation and CPU time for different
iteration limit $N$. However, unlike the CoexDurCG algorithm, we
need to rerun CoexCG for all the experiments whenever $N$ changes.

\begin{table}
\footnotesize
\centering
\caption{Data Instances with $\Phi = 0.2$}\label{Rand_data}
 \begin{tabular}{p{2cm}p{2cm}p{2.5cm}cr}
 \hline
 Index & $\#$ of voxels & $\#$ of apertures  & $b_i$ $\&$ $p_i$ \\
 \hline\hline
Ins. 1& 4096 & 460800  & [30,40,200]  $\&$ [0.05,0.05,0.05] \\
 \hline
 Ins. 2& 4096 & 460800  & [40,50,100]  $\&$ [0.01,0.01,0.05] \\
 \hline
 Ins. 3& 4096 & 460800  & [50,60,80]  $\&$ [0.01,0.01,0.01] \\
 \hline
Ins. 4 &262144 & 7372800 & [40,50,100]  $\&$ [0.01,0.01,0.05] \\
 \hline
 Ins. 5& 262144 & 7372800 & [50,60,80]  $\&$ [0.01,0.01,0.01] \\
 \hline\hline
\end{tabular}
\end{table}

\begin{table}
\footnotesize
\centering
\caption{Results for different Instances}\label{Comp1}
 \begin{tabular}{p{1.5cm}p{1cm}p{1.5cm}p{2cm}p{1.5cm}p{1.5cm}p{1.5cm}cr}
 \hline
 \multirow{2}{*}{ Index} &  \multirow{2}{*}{N }& \multicolumn{3}{c}{ CoexCG } &  \multicolumn{3}{c}{CoexDurCG }  \\
 && $f(x_N)$ &  $\|h(x_N)\|$& CPU(s) & $f(x_N)$& $\|h(x_N)\|$ & CPU(s)\\
 \hline\hline
 \multirow{3}{*}{Ins. 1}& 1 &   46.8723 & 1.7237e+03 &  &&& \\
&100 &0.0683  & 0.4234 &34 & 0.0616  & 0.3705 &33 \\
 &1000 &  0.0197 & 0.0319 &323 & 0.0210 & 0.0219 & 327\\
 \hline
  \multirow{3}{*}{Ins. 2}& 1 &   46.8723 & 1.7237e+03 && &&\\
&100 &0.0568 & 0.4424 & 33& 0.0583  & 0.5002& 34 \\
 &1000 &  0.0224 & 0.0426 & 327& 0.0232 & 0.0334 & 339 \\
 \hline
   \multirow{3}{*}{Ins. 3}& 1 &   46.8723 & 1.7237e+03 && &&\\
&100 &0.0625 & 13.7567 & 33 & 0.0604 & 7.3929 & 33 \\
 &1000 &  0.0227 & 0.0514& 332  &0.0226 & 0.0193 &332\\
 \hline
   \multirow{3}{*}{Ins. 4}& 1 &   47.7099 & 8.7850e+03 && &&\\
&100 &0.4643 & 163.3043&1645  &0.4643 & 163.3043 &1645 \\
 &1000 &   0.0398 & 12.1765 &17254& 0.0398 & 12.1765&17356 \\
 \hline
  \multirow{3}{*}{Ins. 5}& 1 &   47.7099 & 8.7850e+03 && &&\\
&100 &  0.4866 & 253.9389 & 1644  & 0.4581 &  206.9143 & 1637\\
 &1000 &  0.0406  &  39.2051 &17146  &0.0417  & 38.6486 &17607 \\
 \hline\hline
\end{tabular}
\end{table}

In order to test our CoexCG and CoexDurCG algorithms, we still want to compare them with some existing algorithm for constrained convex problems, such as ConEx algorithm \cite{boob2019stochastic}. Since the most existing algorithms will require the computation of full gradient and hence get in stuck when dealing with this very high dimensional problems, we generated a very small dimensional problem (dimension of decision variable is 80000) with similar problem formulation for comparison. From Table~\ref{Comp ConEx}, we see the ConEx algorithm still requires the computation of full gradient, and hence has a total around 630 second computational time of the $\hat D$ matrix for every component. And also we can see the ConEx algorithm finally converges to a almost feasible solution with a bit higher objective function value comparing to the solutions of CoexCG and CoexDurCG algorithms. We also implemented the ConEx algorithm to the Instances 1-5, but the algorithm gets in stuck in the first iteration and keeps running forever.
\begin{table}
\centering
\caption{Comparison with ConEx}\label{Comp ConEx}
\begin{tabular}{p{1cm}p{2cm}p{2cm}p{2cm}p{1.5cm}}
\hline
N & Alg & $f(x_N)$ & $\|h(x_N)\|$ & CPU(s) \\
\hline\hline
\multirow{3}{*}{2} & ConEx &0.482 & 32.156& 632.902 \\
& CoexCG &0.495 & 64.738&0.143\\
& CoexDurCG &0.495 &64.738&0.156\\
\hline
\multirow{3}{*}{10} & ConEx &0.311 &6.137 & 633.948 \\
& CoexCG &0.033 &9.381 & 0.692\\
& CoexDurCG &0.074 &8.913&0.654\\
\hline
\multirow{3}{*}{100} & ConEx &0.279 &0.193 & 642.456 \\
& CoexCG &0.010 &6.165 &6.501\\
& CoexDurCG & 0.015&6.384&6.535\\
\hline
\multirow{3}{*}{1000} & ConEx & 0.301&7.392e-04 & 725.477 \\
& CoexCG &0.010 & 6.626&63.958\\
& CoexDurCG &0.022 &6.361&66.661\\
\hline\hline
\end{tabular}
\end{table}

\subsection{Results for real dataset}
In this subsection, we apply CoexDurCG to the real dataset for a patient with prostate cancer ({\tt https://github.com/cerr/CERR/wiki}), and evaluate the generated solution from the clinical point of view. 
Dose volume histogram (DVH), a histogram relating radiation dose to tissue volume in radiation therapy planning, is commonly used as a plan evaluation tool 
to compare doses received by different structures under different plans~\cite{drzymala1991dose, mayles2007handbook}.
In this prostate dataset, there are totally 10 DVH criteria as follows, PTV56: V56$\geq 95\%$; PTV68: V68$\geq 95\%$, V74.8$\leq 10\%$; Rectum:  V30$\leq 80\%$,  V50$\leq 50\%$, V65$\leq 25\%$; Bladder:  V40$\leq 70\%$, V65$\leq 30\%$; Left femoral head: V50$\leq 1\%$; Right femoral head: V50$\leq 1\%$. 
For this dataset, we have $3,047,040$ voxels, $180$ angles and over $2\times 10^{30}$ potential apertures in each angle. 

Since a smaller number of angles results in shorter treatment duration, we study the quality of the treatment plan generated
when enforcing the group sparsity requirement with different $\Phi$ in \eqref{Group_s}.
In order to balance the scale of the constraint violation, we normalized all the constraints \eqref{cvar1}-\eqref{Group_s} by dividing both sides of the inequalities by the right hand side $b_i$ or $\Phi$. 
The total number of apertures in a typical treatment plan for this dataset would not be greater than $100$. 
Thus, we set the iteration limit to 100 since the CoexDurCG algorithm generates at most one new aperture in each iteration.

Table~\ref{group_spar} shows the number of apertures/angles, objective value and constraints violation for different solutions given different values of $\Phi$. 
Figure~\ref{DVH_comp} plots the DVH performance of the generated treatment plans by presenting how the percentage of voxels in each organ changes over different iterations.
If $\Phi = 1$, the constraint \eqref{Group_s} is redundant and
we obtain a solution with the smallest function value and zero constraints violation, but with the largest number of angles as shown in Table~\ref{group_spar}. 
In addition, the plots in the first column (i.e., parts (a), (d), (g), (j) and (m)) of Figure~\ref{DVH_comp} show that the generated plan satisfies all the DVH criteria. 
Comparing the first two rows in Table~\ref{group_spar}, we see that the solutions remain the same when $\Phi\geq 0.1$. By keeping decreasing $\Phi$, we can obtain solutions with fewer angles. 
Plots in the second column of Figure~\ref{DVH_comp}  shows that most DVH criteria are still satisfied even if the number of angles in the solution reduces from 39 to 8. Moreover, the number of angles can
be decreased to 3 if we are willing to sacrifice certain DVH criteria as we can see from the plots in the third column of Figure~\ref{DVH_comp}. 
\begin{table}
\footnotesize
\centering
\caption{Group Sparsity}\label{group_spar}
 \begin{tabular}{p{2.5cm}p{2.5cm}p{2.5cm}p{2.5cm}cr}
 \hline
  $\Phi$  & $\#$ of apertures  & $\#$ of angles & Obj. Val. & Con. Vio. \\
 \hline\hline
 1& 96 & 39  & 0.0902  &  0   \\
 \hline
 0.1& 96 & 39  & 0.0902  &  0   \\
 \hline
 0.005  & 96 & 8  &  0.1027 & 0.098 \\
 \hline
 0.0005  & 97 & 3  &  0.1357 & 0.0589 \\
 \hline\hline
\end{tabular}
\end{table}

\begin{figure}
\centering
\subfigure[PTV56 when $\Phi = 1$]{
    \includegraphics[width=2in]{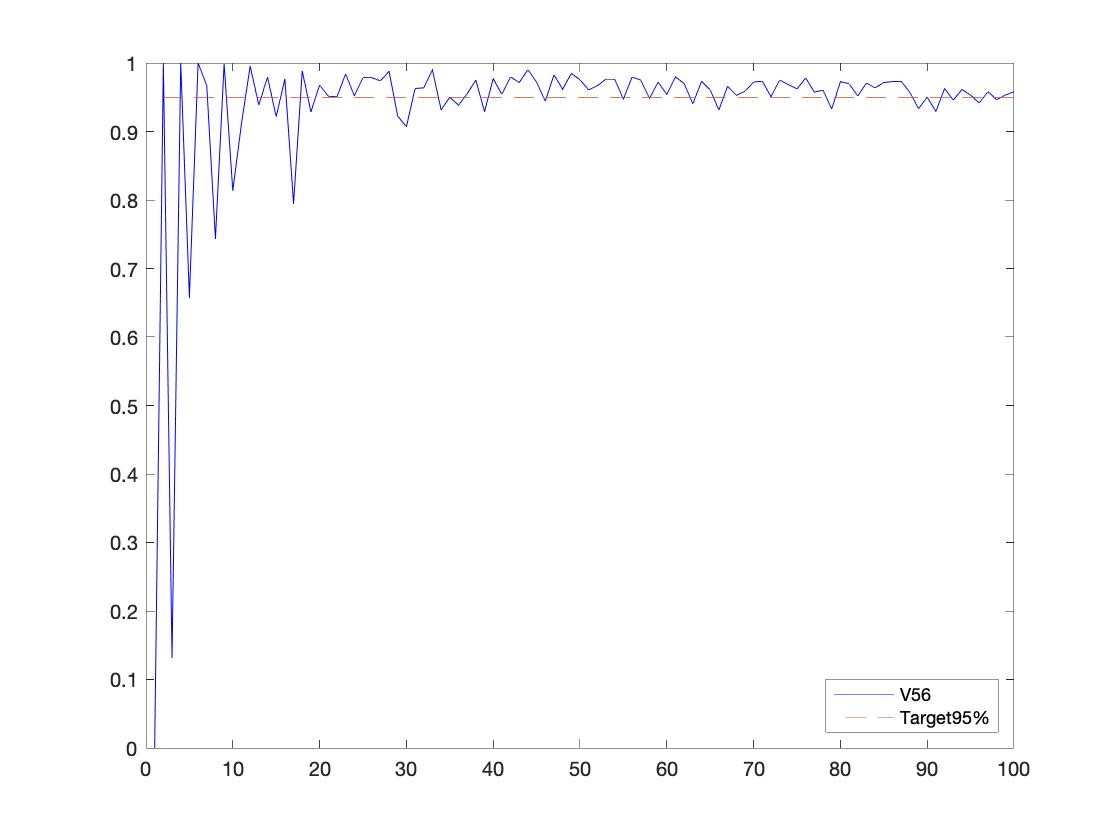}}
  \subfigure[PTV56 when $\Phi = 0.005$]{
    \includegraphics[width=2in]{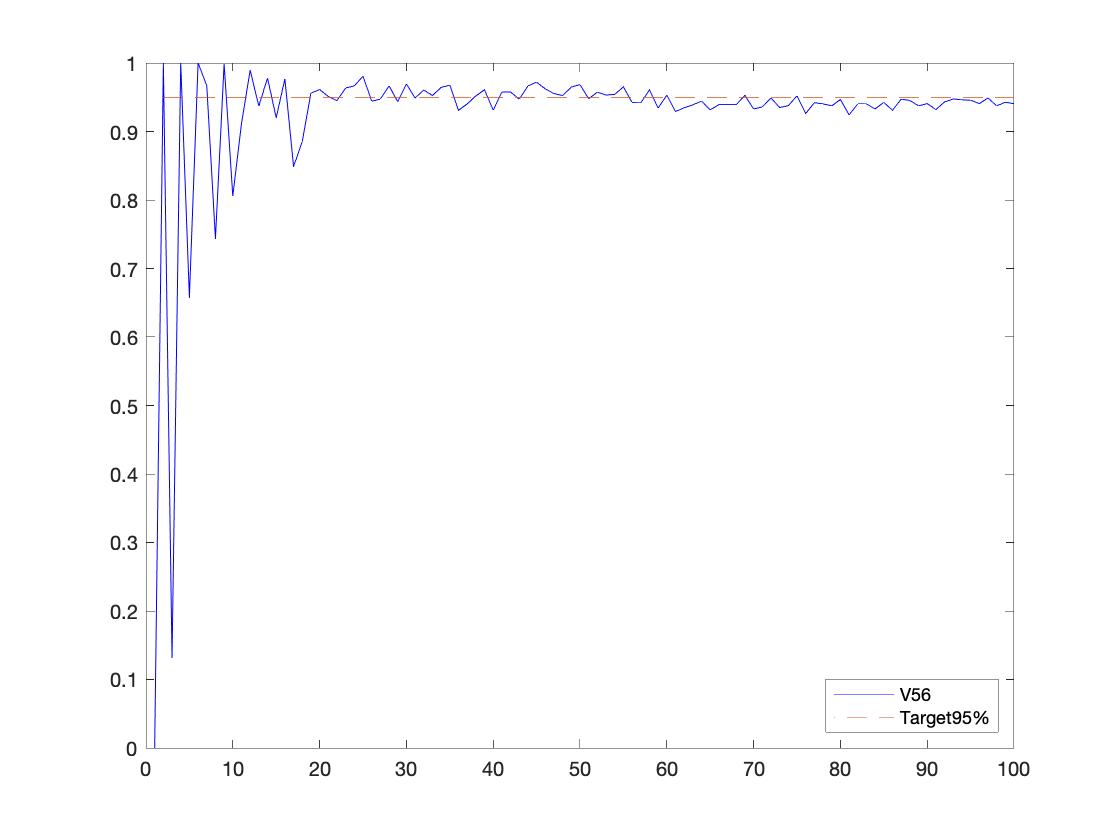}}
  \subfigure[PTV56 when $\Phi = 0.0005$]{
    \includegraphics[width=2in]{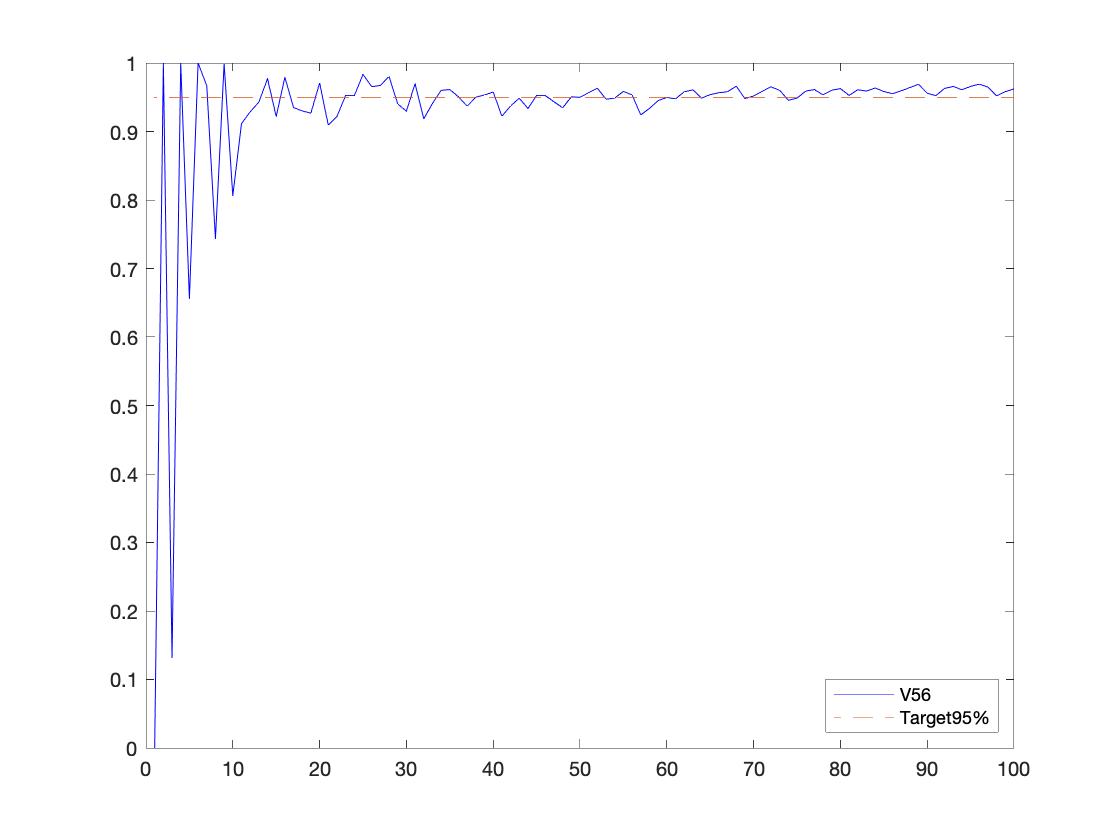}}
\subfigure[PTV68 when $\Phi = 1$]{
    \includegraphics[width=2in]{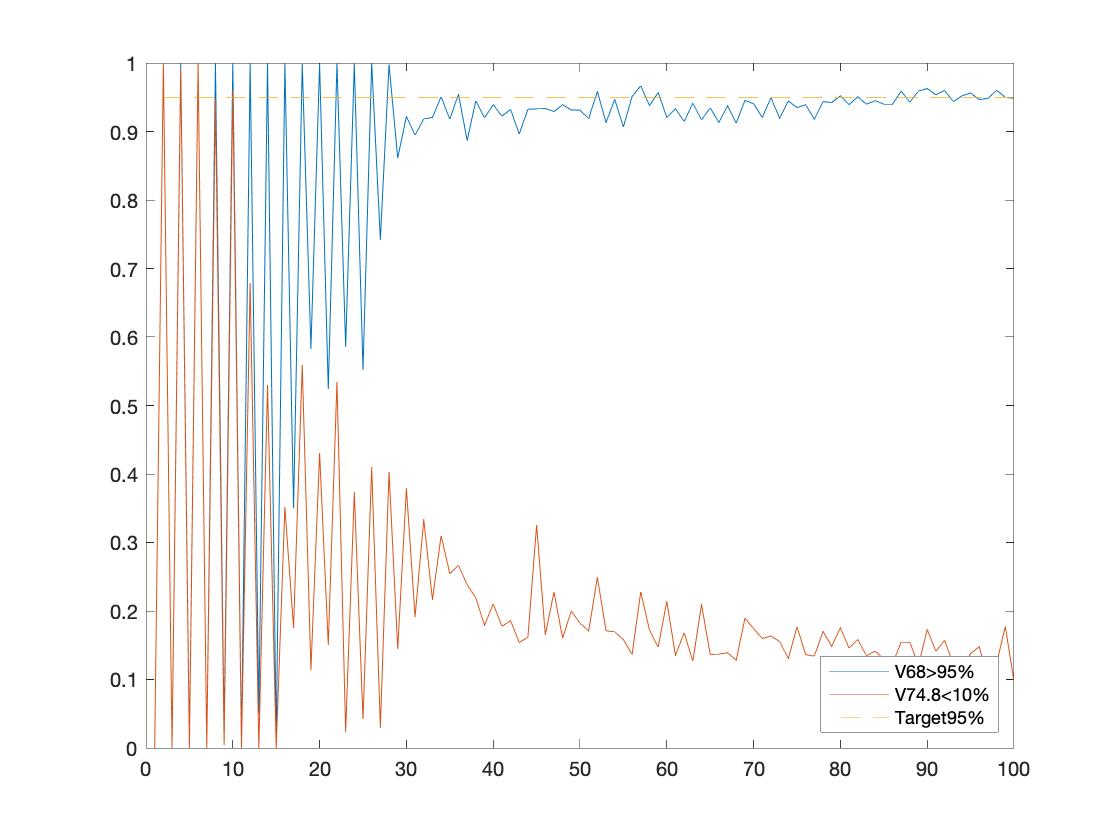}}
  \subfigure[PTV68 when $\Phi = 0.005$]{
    \includegraphics[width=2in]{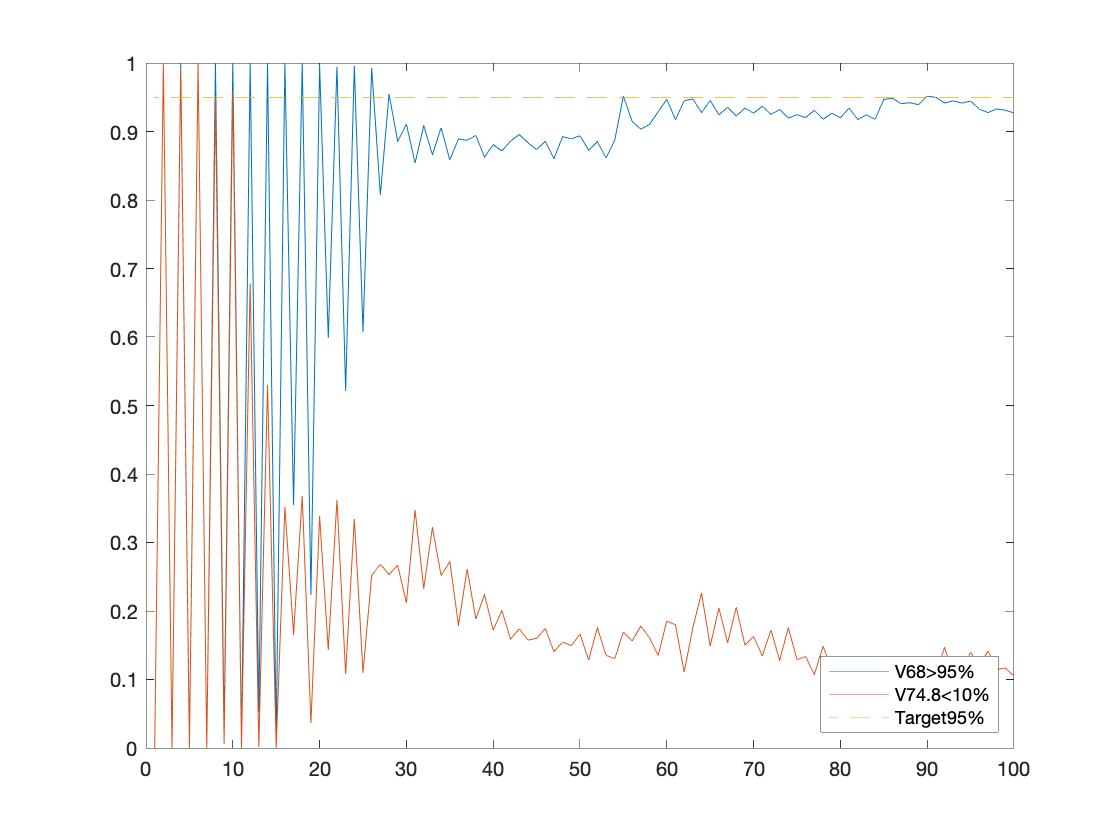}}
  \subfigure[PTV68 when $\Phi = 0.0005$]{
    \includegraphics[width=2in]{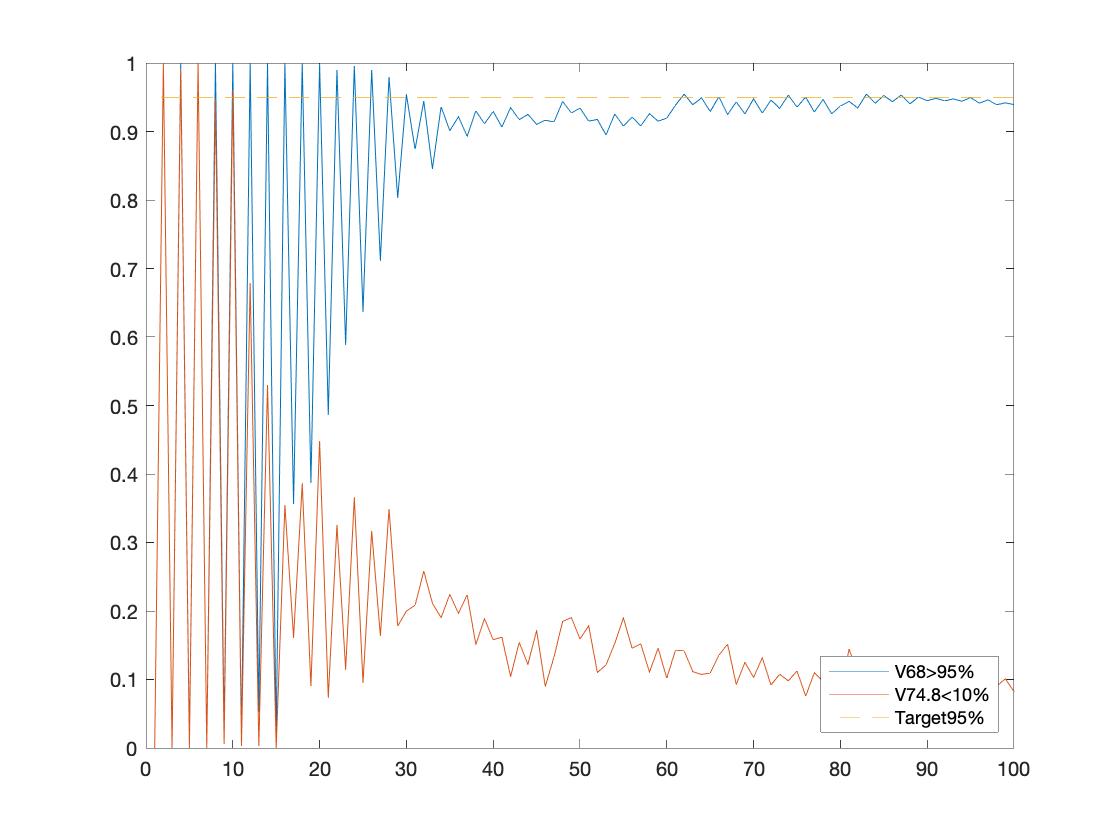}}
\subfigure[Rectum when $\Phi = 1$]{
    \includegraphics[width=2in]{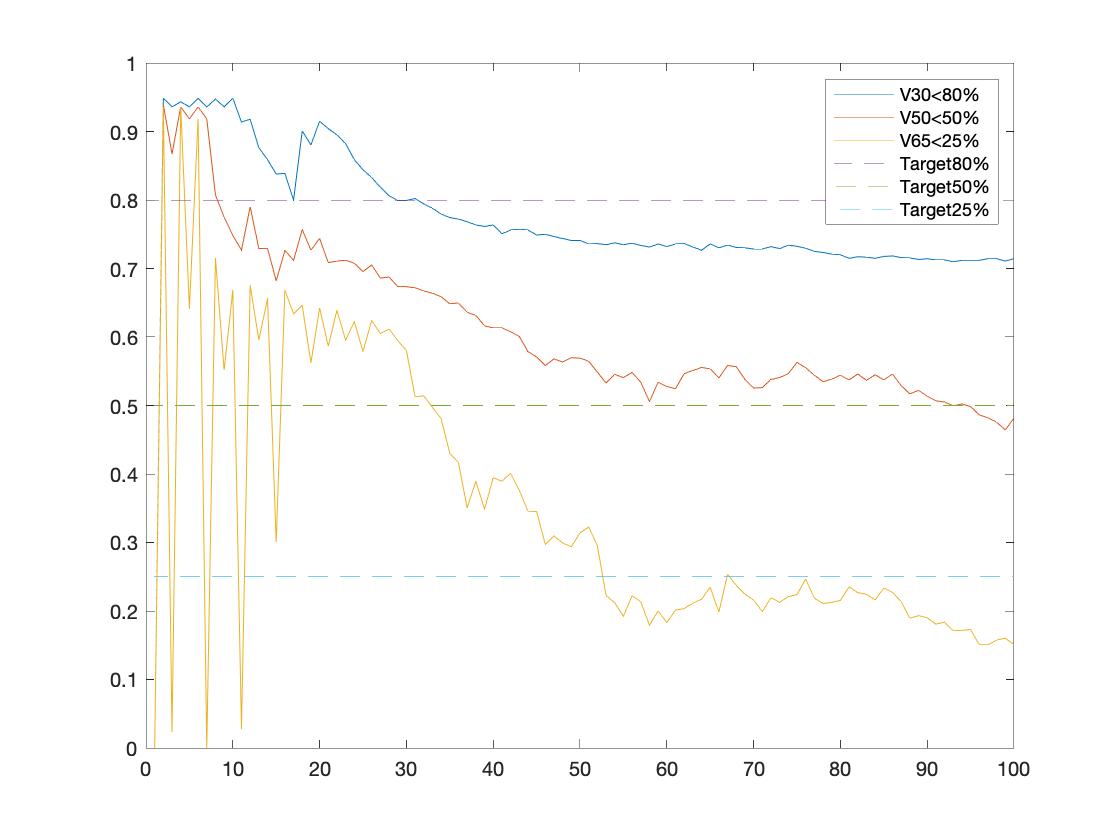}}
  \subfigure[Rectum when $\Phi = 0.005$]{
    \includegraphics[width=2in]{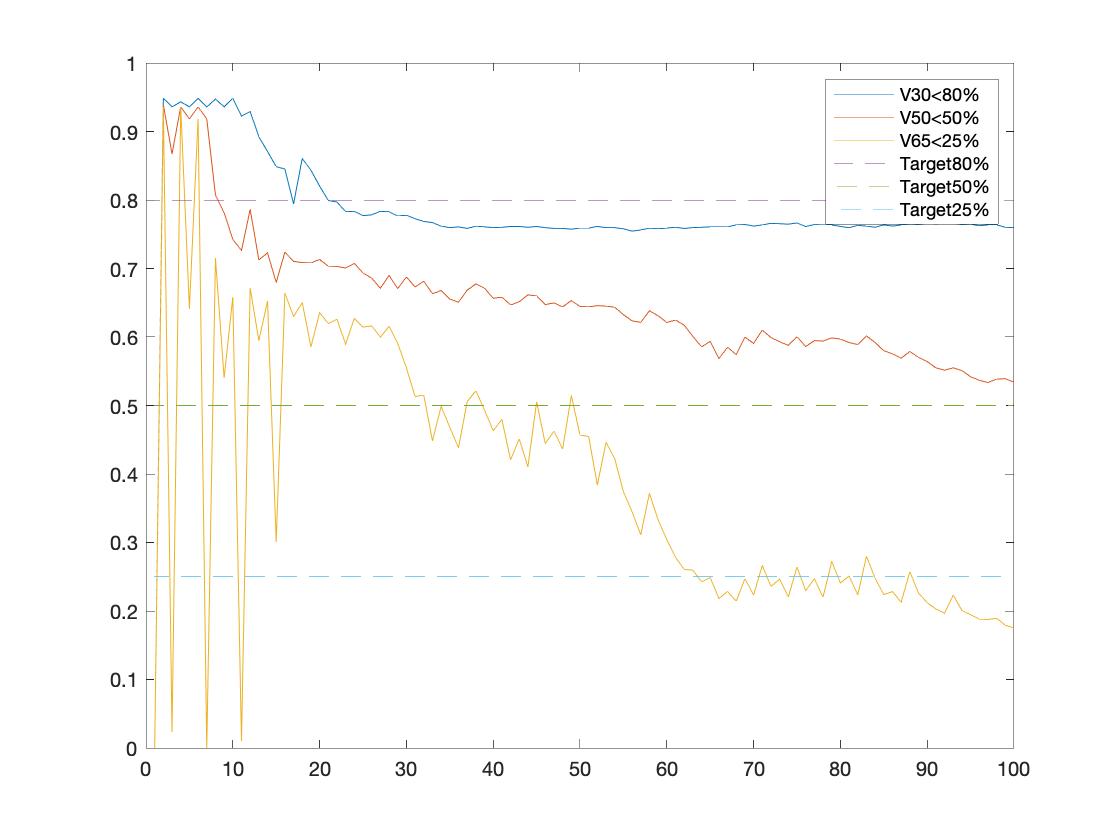}}
  \subfigure[Rectum when $\Phi = 0.0005$]{
    \includegraphics[width=2in]{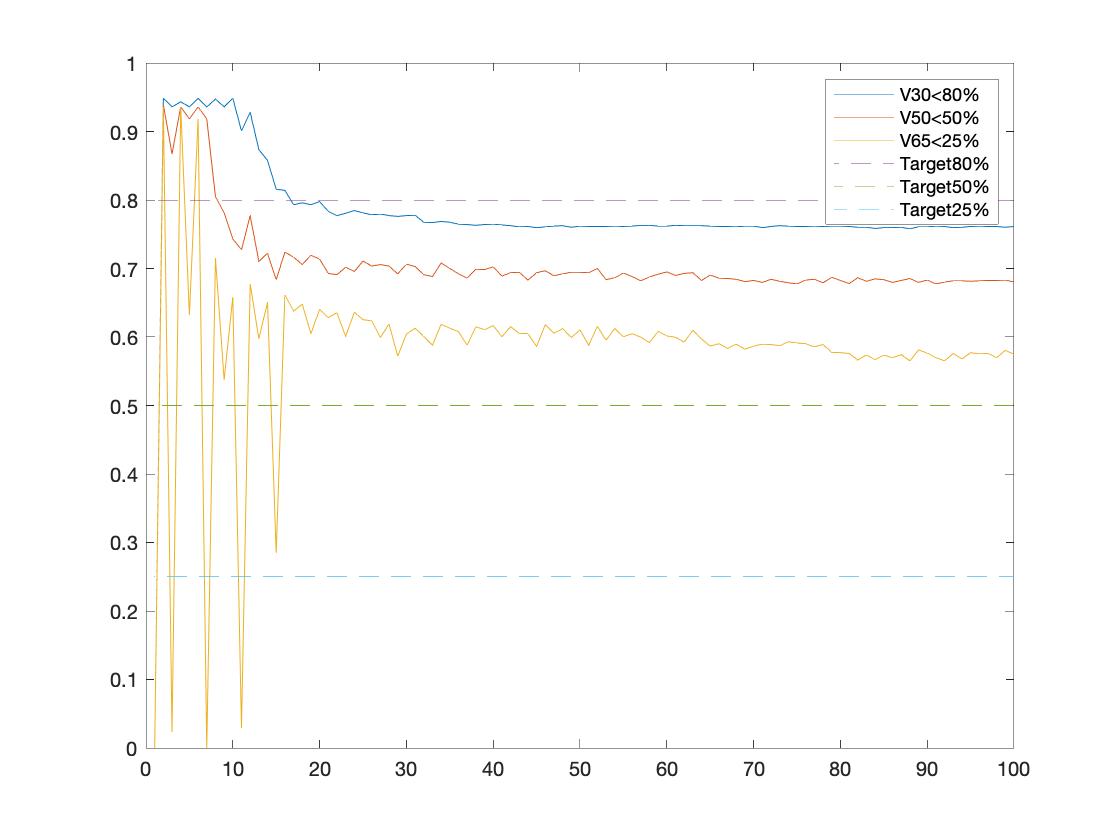}}
    \subfigure[Bladder when $\Phi = 1$]{
    \includegraphics[width=2in]{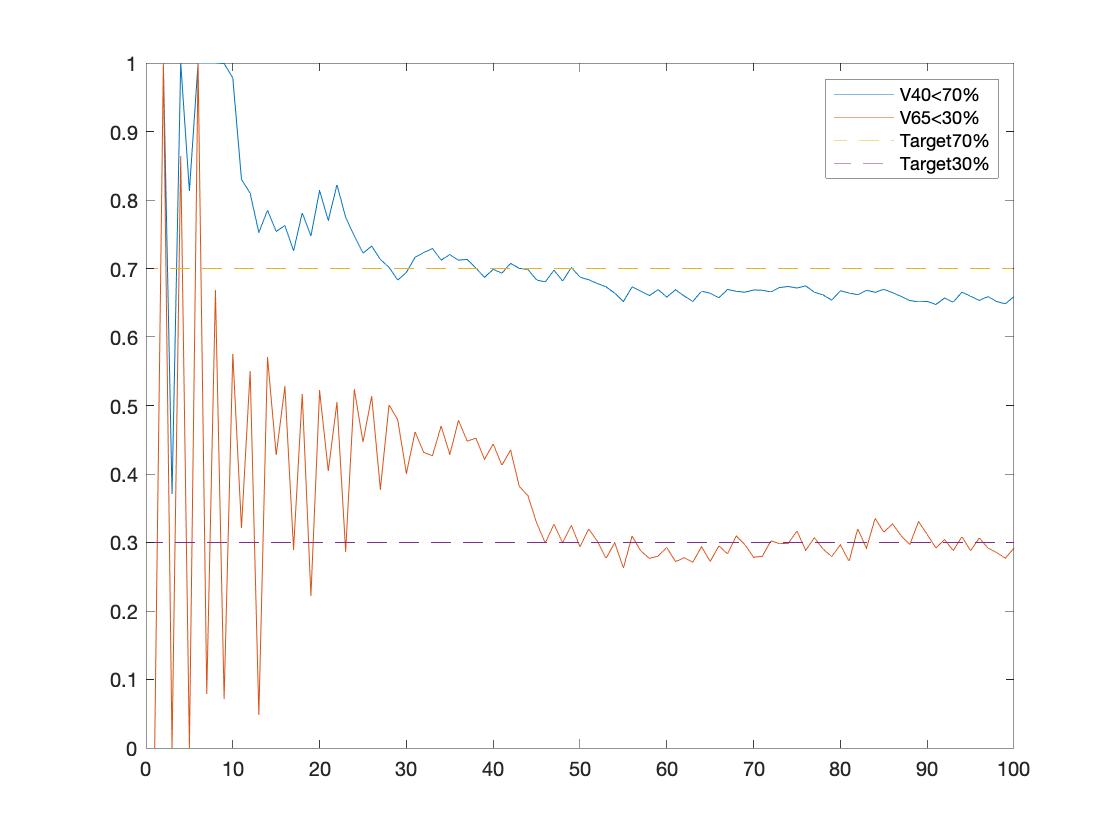}}
  \subfigure[Bladder when $\Phi = 0.005$]{
    \includegraphics[width=2in]{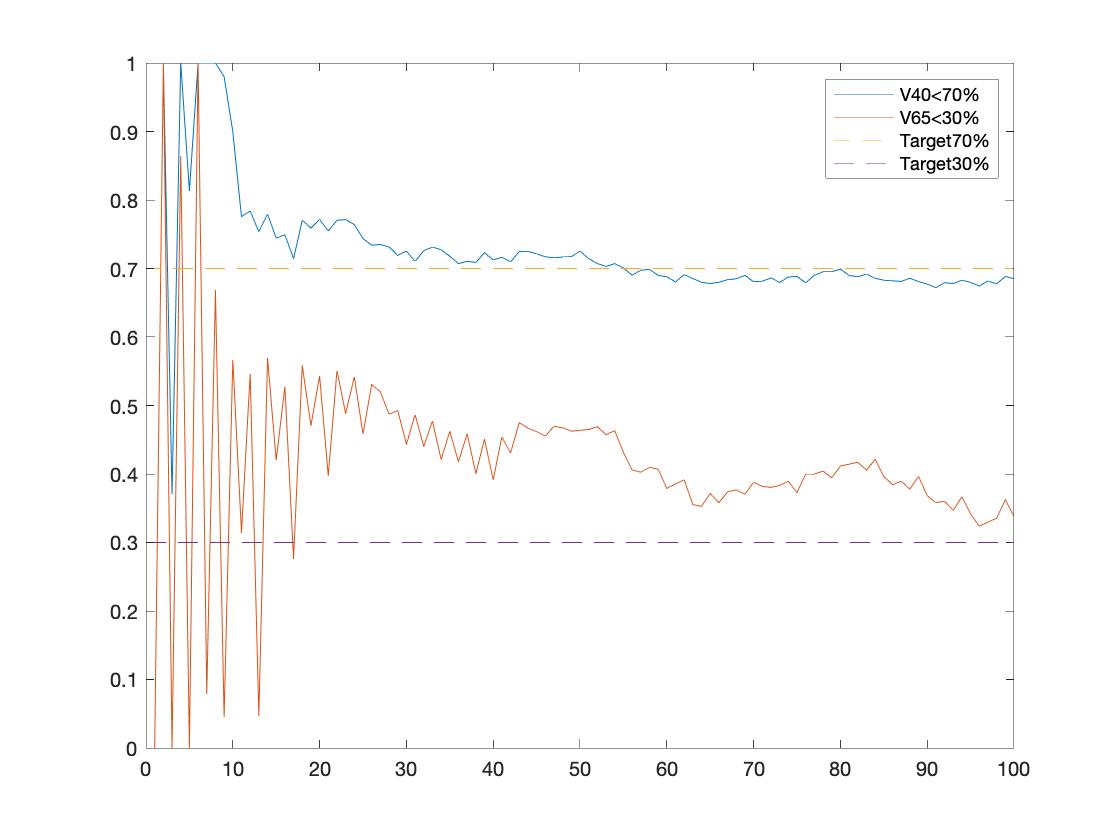}}
  \subfigure[Bladder when $\Phi = 0.0005$]{
    \includegraphics[width=2in]{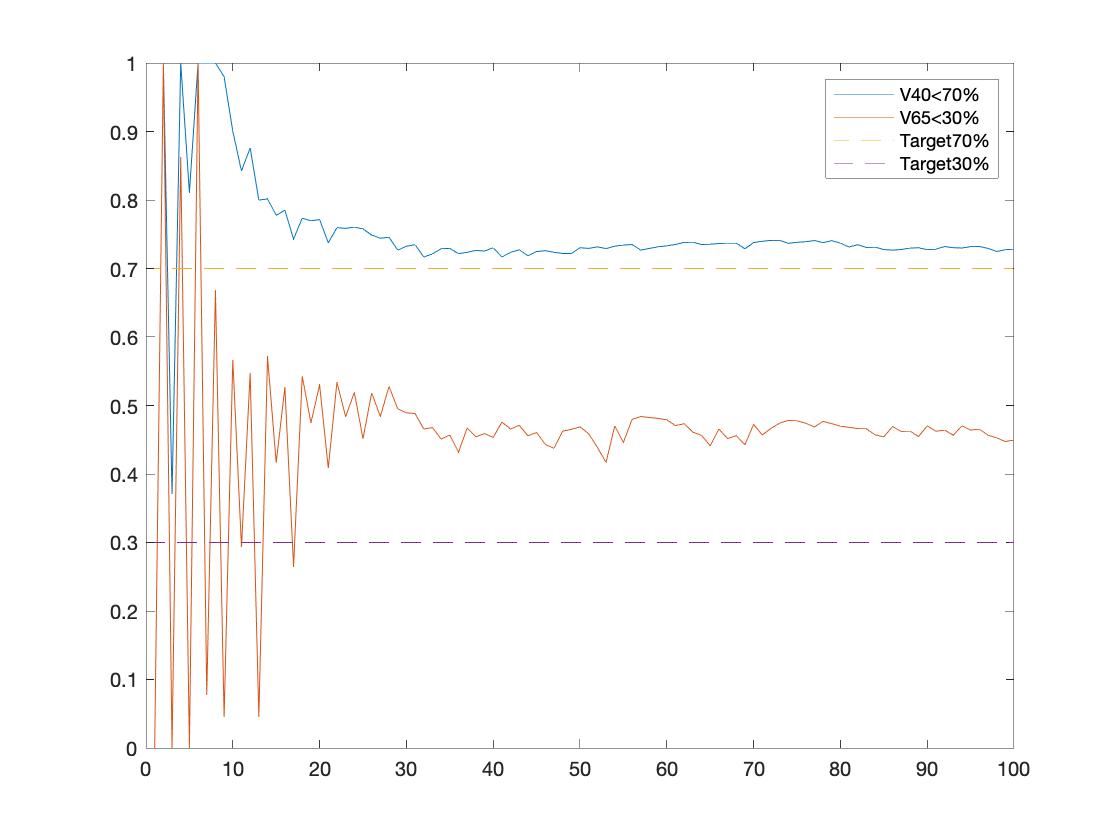}}
    \subfigure[Lt. $\&$ Rt. when $\Phi = 1$]{
    \includegraphics[width=2in]{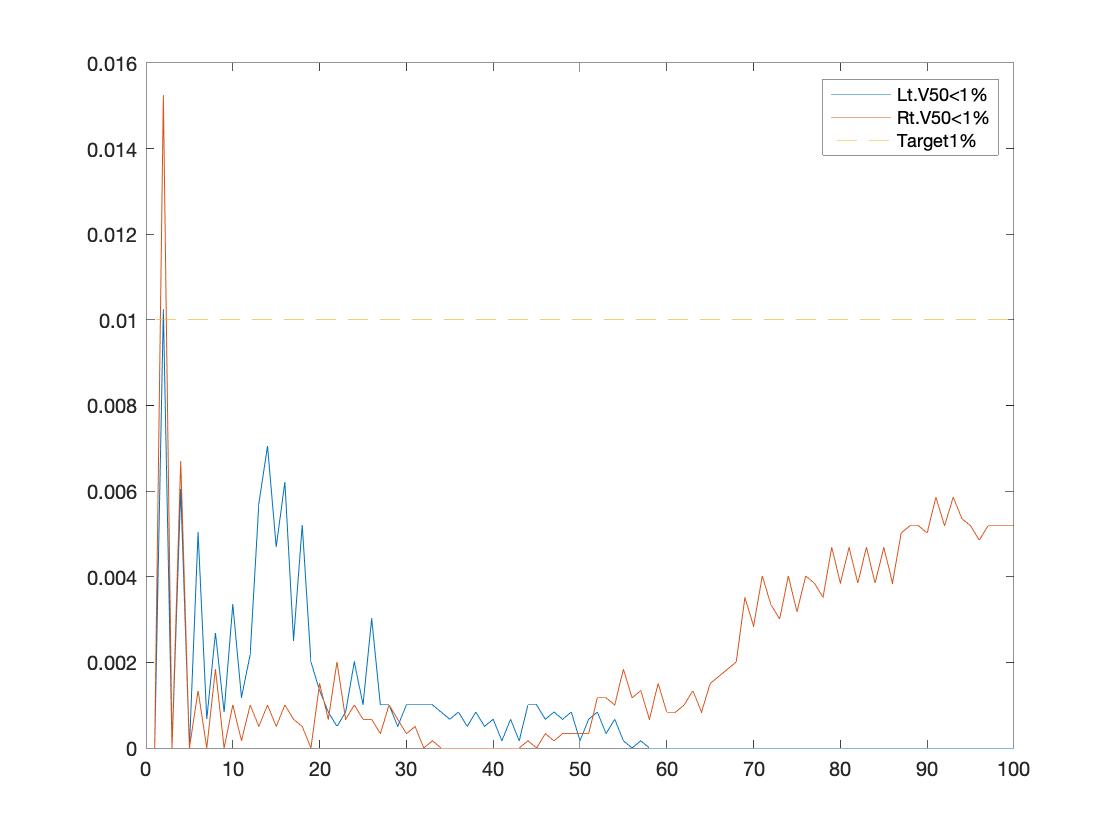}}
  \subfigure[Lt. $\&$ Rt. when $\Phi = 0.005$]{
    \includegraphics[width=2in]{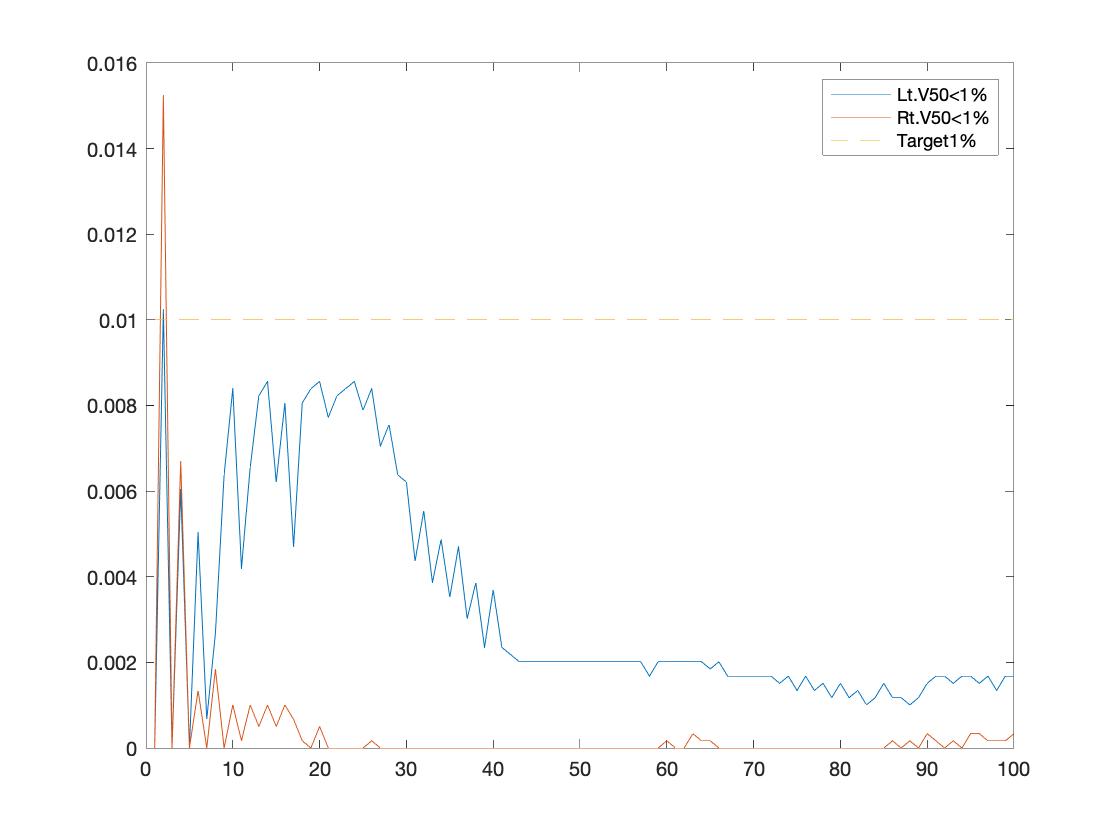}}
  \subfigure[Lt. $\&$ Rt. when $\Phi = 0.0005$]{
    \includegraphics[width=2in]{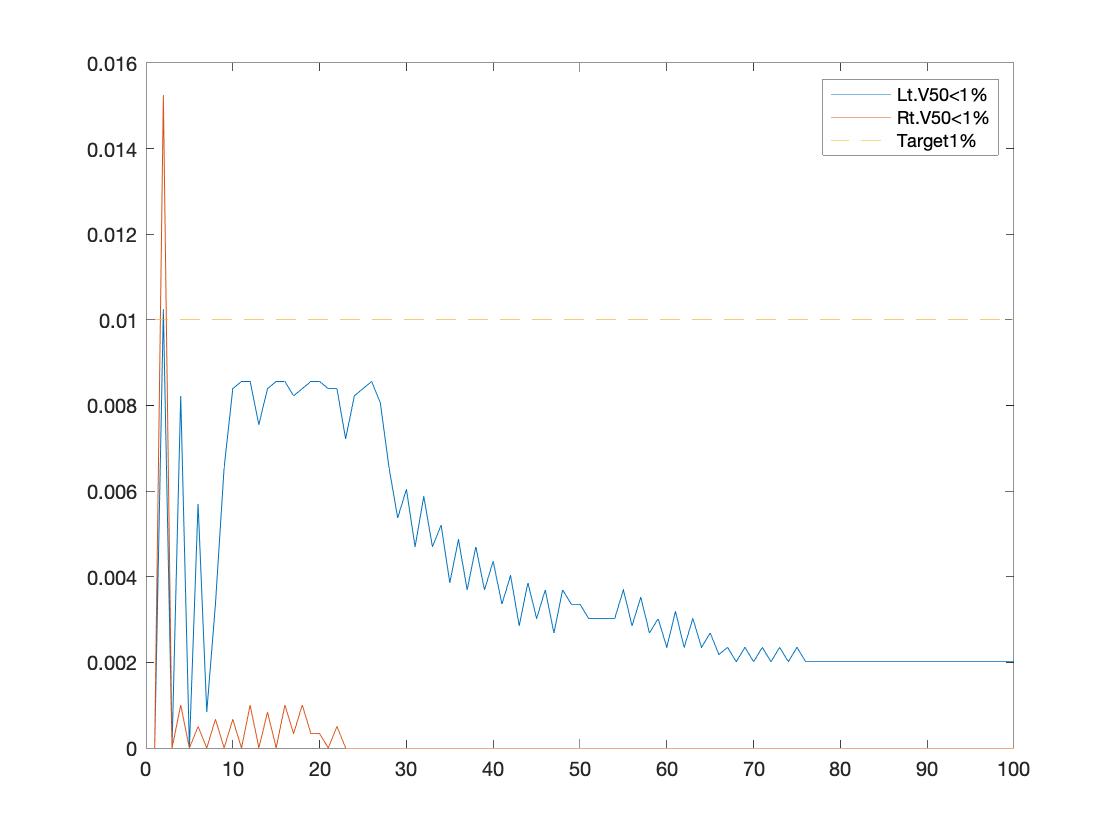}}
\caption{Percentage of voxels in different organs}\label{DVH_comp}
\end{figure}

\section{Concluding Remarks} \label{sec_remark}
In this paper, we propose new constraint-extrapolated conditional gradient (CoexCG) methods for solving general 
convex optimization problems with function constraints. These methods
require only linear optimization rather than projection over the convex set $X$. We 
establish the ${\cal O}(1/\epsilon^2)$ iteration complexity for CoexCG
and show that the same complexity still holds even if the objective or constraint functions
are nonsmooth with certain structures. We further present novel dual regularized algorithms that
do not require us to fix the number of iterations a priori and show that they can attain
complexity bounds similar to CoexCG. Effectiveness of these methods
are demonstrated for solving a challenging function constrained convex optimization problems arising from IMRT
treatment planning. 

It seems to be possible to use some ideas from the conditional gradient sliding methods~\cite{lan2014conditional} 
to improve the number of gradient computation of $f$ and $h$, as well as the
operator evaluation of $g$. However, the conditional gradient sliding type methods would require
us to compute and store the full gradient information.
For the IMRT treatment planning problem, it is impossible to compute the full gradient since its dimension increases exponentially with the size of aperture.
Nevertheless, incorporating the idea of conditional gradient sliding for solving problems
with function constraints will be an interesting topic for future research. 

\bibliographystyle{abbrv}
\bibliography{glan-bib}
\end{document}